\documentclass[10pt,a4paper]{article}
\usepackage[a4paper]{geometry}
\usepackage{amssymb,latexsym,amsmath,amsfonts,amsthm,currvita}
\usepackage{graphicx,comment}
\usepackage{epsfig}
\usepackage{tikz}
\usepackage{subcaption}
\usepackage{comment}
\usepackage[T1]{fontenc}
\usepgflibrary{decorations.markings}
\usetikzlibrary{decorations.markings}
\usepackage{cancel}

\newcommand{\No}{\mathbb{N}_{\mathrm{odd}}}

\renewcommand{\l}{\left}
\renewcommand{\r}{\right}

\newcommand{\caliP}{\mathcal{P}}

\newcommand{\cj}{\overline}

\newcommand{\N}{\mathbb{N}}

\newcommand{\R}{\mathbb{R}}

\newtheorem{theorem}{Theorem}[section]
\newtheorem{lemma}[theorem]{Lemma}
\newtheorem{proposition}[theorem]{Proposition}

\newtheorem{corollary}[theorem]{Corollary}

\theoremstyle{definition}
\newtheorem{definition}[theorem]{Definition}

\theoremstyle{remark}

\newtheorem{remark}[theorem]{Remark}

\numberwithin{equation}{section}

\title{Higher-order asymptotics for the energy of greedy sequences on the unit circle}

\author{Abey L\'{o}pez-Garc\'{i}a \qquad Erwin Mi\~{n}a-D\'{i}az}

\date{\today}

\begin{document}

\maketitle

\begin{abstract}
For the Riesz and logarithmic energies, we consider a greedy sequence $(a_n)_{n=0}^\infty$ of points on the unit circle $S^1$ constructed in such a way that for every integer $N\geq 2$, the energy of the configuration $(a_0,\ldots,a_{N-2},x)$ attains its optimal value (say $E_N$) at $x=a_{N-1}$.  We derive an asymptotic expansion for $E_N$ in terms of certain bounded, oscillatory sequences $H_{N}$, $K_{N}$, and $R_{N}$ with a doubling periodicity property. In particular, we recover the results of \cite{LopMc1,LopWag} showing that after a proper translation and scaling of $E_N$, one is left with a sequence $T_N$ that is bounded and divergent. We show that the limit points of the sequence $T_N$ fill a closed interval. This follows from our asymptotic formulae and an analogous density result for the limit points of the sequences $H_{N}$, $K_{N}$, and $R_{N}$. We also give a new, simpler proof of density results obtained in \cite{LopMin} for the optimal values of the potential generated by a greedy sequence.
	
	\smallskip
	
	\textbf{Keywords:} Greedy energy sequence, Riesz energy, logarithmic energy, Riemann zeta function, asymptotic expansion, doubling periodicity.
	
	\smallskip
	
	\textbf{MSC 2020:} Primary 31C20, 31A15; Secondary 11M06.
	
	\end{abstract}
\tableofcontents
\section{Introduction}

The Riesz $s$-kernel in the complex plane corresponding to a parameter $s\in\mathbb{R}$ is the function 
\[
k_{s}(z,w):=\begin{cases}
|z-w|^{-s}, & s\neq 0,\\
-\log |z-w|, & s=0.
\end{cases}
  \]
Given a configuration $\omega=(z_{1},\ldots,z_{N})$ of $N\geq 2$ distinct points on the unit circle $S^1=\{z\in\mathbb{C}:|z|=1\}$, the Riesz $s$-energy of $\omega$ is defined as
\[
E_{s}(\omega):=\sum_{1\leq i\neq j\leq N}k_{s}(z_{i},z_{j})=2\sum_{1\leq i<j\leq N}k_{s}(z_{i},z_{j}),
\]
and the $s$-potential generated by the $N\geq 1$ points $z_{1},\ldots,z_{N}$ is the function
\[
z\mapsto \sum_{\ell=1}^{N}k_{s}(z,z_{\ell}),\qquad z\in\mathbb{C}.
\]
In particular,
\[
E_{0}(\omega)=2\sum_{1\leq i<j\leq N}\log\frac{1}{|z_{i}-z_{j}|}.
\]

In the case $s\geq 0$, we say that a sequence $(a_{n})_{n=0}^{\infty}\subset S^{1}$ on the unit circle is a greedy $s$-energy sequence if it is constructed inductively as follows. The initial point $a_{0}$ is selected arbitrarily on $S^{1}$, and if $a_{0},\ldots, a_{N-1}$ have been selected, then the point $a_{N}\in S^{1}$ is chosen so that 
\begin{equation*}
	E_s((a_0,\ldots,a_{N-1},a_{N}))=\min_{z\in S^{1}}E_s((a_0,\ldots,a_{N-1},z))\qquad\mbox{for all}\,\,\,N\geq 1.
\end{equation*}

The potential generated by the first $N$ points of a greedy $s$-energy sequence $(a_{n})_{n=0}^{\infty}$ is the function 
\[
U_{N,s}(z):=\sum_{\ell=0}^{N-1}k_{s}(z,a_{\ell}),\qquad  N\geq 1.
\] 
From the identity 
\[
E_s(a_0,\ldots,a_{N-1},z)=2U_{N,s}(z)+E_s(a_0,\ldots,a_{N-1}),
\]
we see that the  potential   $U_{N,s}(z)$, as a function of  $z\in S^1$, is also minimized at $z=a_N$:
\begin{align}\label{defgreedyseq} 
	U_{N,s}(a_{N}) & =\min_{z\in S^{1}}U_{N,s}(z),\qquad s\geq 0.
\end{align} 
Since the function $U_{N,s}(z)$ being minimized is lower semicontinuous, the minimum is certainly attained. In the case $s<0$, the construction of a greedy $s$-energy sequence is identical but we replace the minimum in \eqref{defgreedyseq} by a maximum at each step. Note that the choice of a point $a_{n}$ in the sequence is in general not unique. In the logarithmic case $s=0$, the sequences obtained are commonly known as Leja sequences.

Due to the symmetry of the unit circle, greedy $s$-energy sequences on $S^{1}$ associated with a parameter $s>-2$ have identical geometric structure, independent of $s$. This follows from results in \cite{BC,CalviVan,LopMc1,LopWag}. It was first shown in \cite{BC} that the geometric structure of the first $N$ points of a Leja sequence on $S^{1}$ can be described in terms of the binary decomposition of $N$. This link with the binary representation of integers was the key to obtain a number of asymptotic results in \cite{LopMc1,LopWag} for the Riesz $s$-energy of sections of a greedy sequence, which were established in the range $s>-2$. The same link was exploited in \cite{LopMc2,LopMin} to obtain asymptotics for the extremal values $U_{N,s}(a_{N}) $. In the range $s<-2$, greedy sequences on $S^{1}$ are concentrated on two diametrically opposed points, and in the case $s=-2$, greedy sequences $(a_{n})_{n=0}^{\infty}$ are exactly those satisfying the condition $a_{2k+1}=-a_{2k}$ for each $k\geq 0$, see \cite{LopMc1}. So it follows that the main interest for the asymptotic analysis lies in the case $s>-2$.

If $(a_{n})_{n=0}^{\infty}$ is a greedy sequence on the unit circle associated with a parameter $s>-2$, we write
\[
\alpha_{N,s}:=(a_{0},\ldots,a_{N-1}),\qquad N\geq 1.
\]
 It follows from the main results obtained in \cite{LopMc1,LopWag} that after properly scaling and translating $E_{s}(\alpha_{N,s})$, one obtains a new sequence $(T_{N,s})_{N=2}^\infty$ that is \emph{bounded and divergent}, namely, the sequence
\begin{equation}\label{defTNs}
T_{N,s}:=\begin{cases}
E_{s}(\alpha_{N,s})-N^{2} I_{s}(\sigma), & -2<s<-1,\\[0.4em]
\displaystyle\frac{E_{-1}(\alpha_{N,-1})-N^{2}I_{-1}(\sigma)}{\log N}, & s=-1,\\[0.8em]
\displaystyle\frac{E_{0}(\alpha_{N,0})+N\log N}{N}, & s=0,\\[0.8em]
\displaystyle\frac{E_{s}(\alpha_{N,s})- N^{2}I_{s}(\sigma)}{N^{1+s}}, & -1<s<1,\,\,s\neq 0,\\[0.8em]
\displaystyle\frac{E_{1}(\alpha_{N,1})-\pi^{-1} N^{2} \log N}{N^{2}}, & s=1,\\[0.8em]
\displaystyle\frac{E_{s}(\alpha_{N,s})}{N^{1+s}}, & s>1,
\end{cases}
\end{equation}
where  (cf. \cite[Cor. A.11.4]{BorHarSaff})
\begin{align}\label{energyeqmeas}
I_{s}(\sigma):=\iint_{S^{1}\times S^{1}}|x-y|^{-s}\,d\sigma(x)\,d\sigma(y)=\frac{2^{-s}}{\sqrt{\pi}}\frac{\Gamma\left(\frac{1-s}{2}\right)}{\Gamma(1-\frac{s}{2})}
\end{align}
is the $s$-energy of the arclength measure $\sigma$ on $S^{1}$, normalized so that $\sigma(S^{1})=1$. The goal of the present paper is to gain a deeper understanding of the  asymptotic behavior of $T_{N,s}$ as $N\to\infty$. This behavior is primarily governed by certain arithmetic functions that we introduce next. 

Following \cite{LopWag}, for a vector $\vec{\theta}=(\theta_{1},\ldots,\theta_{p})$ with $p\in\mathbb{N}$ positive components $\theta_{k}>0$, and for any $s\in\mathbb{R}$, we define
\begin{equation}\label{def:H}
H(\vec{\theta};s):=\sum_{k=1}^{p}\theta_{k}^{s+1}+2(2^{s}-1)\sum_{k=1}^{p-1}\theta_{k}^{s}\sum_{j=k+1}^{p}\theta_{j},
\end{equation}
\begin{equation}\label{def:K}
K(\vec{\theta}):=2\log 2+\sum_{k=1}^{p}\theta_{k}^{2}\log(\theta_{k}/4)+2\sum_{k=1}^{p-1}\left(\sum_{j=k+1}^{p}\theta_{j}\right)\theta_{k}\log\theta_{k}.
\end{equation}
In the above formulas, the length $p$ of the vector $\vec{\theta}$ is arbitrary. We also introduce
\begin{equation}\label{def:Phi}
	R(\vec{\theta}):=-(2\log 2)\sum_{k=1}^{p}(k-1)\theta_{k}-\sum_{k=1}^{p}\theta_k\log\theta_{k}.
\end{equation}
As we will show, the functions defined in \eqref{def:H}--\eqref{def:Phi} play the leading role in the asymptotics of $T_{N,s}$.

For an integer $N\in\mathbb{N}$ with binary decomposition 
\begin{equation*}
	N=2^{n_{1}}+2^{n_{2}}+\cdots+2^{n_{p}},\qquad n_{1}>n_{2}>\cdots>n_{p}\geq 0,
\end{equation*}
we write
\begin{equation*}
\eta(N) :=\left(\frac{2^{n_{1}}}{N},\frac{2^{n_{2}}}{N},\ldots,\frac{2^{n_{p}}}{N}\right).
\end{equation*}
Observe that this vector function has the doubling periodicity property
\begin{equation}\label{doubperiod1}
\begin{aligned}
\eta(N) & =\eta(2N).
\end{aligned}
\end{equation}
Note also that $H(\eta(N);0)=H(\eta(N);1)=1$ for all $N\in\mathbb{N}$.

In the following result, and throughout the paper, $\zeta(s)$ denotes the Riemann zeta function, and $\gamma=\lim_{N\rightarrow\infty}(\sum_{k=1}^{N}\frac{1}{k}-\log N)$ is the Euler-Mascheroni constant.

\begin{theorem}\label{theo:leadasympTNs}
The following asymptotic formulae hold true as $N\rightarrow\infty$. If $s>-1$ and $s\neq 0, 1$, then
	\begin{equation}\label{expTNs}
		T_{N,s}=\frac{2\zeta(s)}{(2\pi)^{s}}H(\eta(N);s)+\begin{cases}
			O(N^{-1-s}), & -1<s<1,\,\,s\neq 0,\\
			O(N^{1-s}), & 1<s<3,\,\,s\neq 2,\\
			O(N^{-2}), & s=2,\,\,\mbox{or}\,\,s>3,\\
			O(N^{-2}\log N), & s=3.
		\end{cases}
	\end{equation}
	If $s=-1$, then
	\begin{equation}\label{expTN-1}
		T_{N,-1}=-\frac{\pi}{3}\frac{H(\eta(N);-1)}{\log N}+O((\log N)^{-1}).
	\end{equation}
		If $s=1$, then
	\begin{equation}\label{expTN1}
		T_{N,1}=\frac{1}{\pi}(\gamma+\log(2/\pi)+K(\eta(N)))+O(N^{-2}).
	\end{equation}
	If $s=0$, for each integer $N\geq 2$ we have
	\begin{equation}\label{TN0Phi}
		T_{N,0}=R(\eta(N)).
	\end{equation}
\end{theorem}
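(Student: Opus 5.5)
The approach rests on the known binary structure of greedy sequences on $S^1$ for $s>-2$ (\cite{BC,CalviVan,LopMc1,LopWag}). Up to rotation, the first $N=2^{n_1}+\cdots+2^{n_p}$ points split into $p$ disjoint blocks. The $k$-th block is a rotated copy of the $2^{n_k}$-th roots of unity. Moreover, the block associated with $2^{n_j}$, $j>k$, sits inside a rotation of the $2^{n_k+1}$-th roots of unity, interlaced with the $k$-th block at the "midpoints" of its gaps. Accordingly, I will write
\[
E_s(\alpha_{N,s})=\sum_{k=1}^{p}E_s(\text{block }k)+2\sum_{k<j}\sum_{x\in\text{block }j}U^{(k)}_s(x),
\]
where $U^{(k)}_s$ is the potential of block $k$.

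\textbf{Step 1: closed forms for the two ingredients.} Both ingredients have closed forms. The energy of $M$ equally spaced points is $M\sum_{m=1}^{M-1}(2\sin(\pi m/M))^{-s}$. The potential of the $M$-th roots of unity at a point of the interlaced coset (the $M$ midpoints) equals the difference between the sum over the $2M$-th roots of unity and the sum over the $M$-th roots, i.e.
\[
U_M^{\rm mid}=\tfrac1M\bigl(E(2M)/2-E(M)\bigr)\ \text{(per point)}.
\]
The key structural fact I will use is that every point of block $j$ ($j>k$) is such a midpoint for block $k$. This holds because block $j$ lies in the coset of the $2^{n_k+1}$-th roots complementary to block $k$, and all those points are equivalent under the symmetry of block $k$. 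Hence
\[
E_s(\alpha_{N,s})=\sum_k \mathcal{E}_s(2^{n_k})+2\sum_{k<j}2^{n_j}\,U_s^{\rm mid}(2^{n_k}),
\]
where $\mathcal{E}_s(M)$ is the energy of $M$ roots of unity.

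\textbf{Step 2: asymptotics of the building blocks.} For $s\neq 1$ (and $s>-1$), the standard expansion via the Hurwitz zeta function gives
\[
\mathcal{E}_s(M)=I_s(\sigma)M^2+\frac{2\zeta(s)}{(2\pi)^s}M^{1+s}+\sum_{q\ge1}c_q(s)M^{1+s-2q}.
\]
The midpoint potential follows from the doubling identity:
\[
M\,U^{\rm mid}_s(M)=\tfrac12\mathcal{E}_s(2M)-\mathcal{E}_s(M)=I_s(\sigma)M^2+\frac{2\zeta(s)}{(2\pi)^s}(2^s-1)M^{1+s}+\cdots.
\]
Substituting into Step 1, the $I_s$ terms combine into $I_s(\sigma)\bigl(\sum_k 2^{2n_k}+2\sum_{k<j}2^{n_k}2^{n_j}\bigr)=I_s(\sigma)N^2$. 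The $\zeta$ terms give exactly
\[
\frac{2\zeta(s)}{(2\pi)^s}\Bigl(\sum_k 2^{n_k(1+s)}+2(2^s-1)\sum_{k<j}2^{n_ks}2^{n_j}\Bigr)=\frac{2\zeta(s)}{(2\pi)^s}N^{1+s}H(\eta(N);s).
\]
For $s>1$ the $I_s$ term is absent, since the sums are convergent and equal the zeta terms.

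\textbf{Step 3: error terms and special cases.} The error terms come from the next correction $c_1M^{s-1}$, and its block sum has to be controlled. For block $k$ it contributes $\sum_k 2^{n_k(s-1)}$ plus cross terms $\sum_{k<j}2^{n_j}2^{n_k(s-2)}$. Bounding these geometric sums against $N$ depends on whether $s-1$ and $s-2$ are positive, negative, or zero. This produces the case split, with $s=3$ (where $\sum_{k<j}2^{n_j}2^{n_k}$ has a $\log$-many factor) and $s=2$ being borderline. I expect this bookkeeping, together with the uniformity of the remainder of the Hurwitz/Euler–Maclaurin expansion in $M$, to be the main technical obstacle.

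For $s=0$, exact formulas replace the asymptotics: $\mathcal{E}_0(M)=-M\log M$ and $U_0^{\rm mid}(M)=-\log 2$. These give the exact identity
\[
E_0=-\sum_k 2^{n_k}n_k\log2-2\log2\sum_k (k-1)2^{n_k}.
\]
Rewriting with $\theta_k=2^{n_k}/N$ and adding $N\log N$ yields $N\,R(\eta(N))$.

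For $s=1$ and $s=-1$, I will use the analogous expansions
\[
\mathcal{E}_1(M)=\pi^{-1}M^2(\log M+\gamma+\log(2/\pi))+O(1),
\]
and the corresponding one with a $\log M$ coefficient at $s=-1$. Alternatively, these can be obtained by letting $s\to\pm1$ in the Step 2 expansion. In both cases the logarithmic terms reorganize into $K(\eta(N))$, using $\log(2^{n_k})=\log N+\log\theta_k$; the constant $2\log2$ and the $\log(\theta_k/4)$ arise from the midpoint doubling. At $s=-1$ the pole of $\zeta$ at $s=1$ does not occur, so the $\zeta(-1)$ term survives, while the $\log N$ scaling comes from the $I_{-1}$-type logarithmic term.
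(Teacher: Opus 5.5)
Your overall architecture matches the paper's. The block/midpoint decomposition is equivalent to \eqref{energybin} via $2^{n_k}U^{\mathrm{mid}}_s(2^{n_k})=\frac12\mathcal{L}_s(2^{n_k+1})-\mathcal{L}_s(2^{n_k})$, and the expansion of $\mathcal{L}_s$ is then block-summed term by term. Your treatment of $s=0$ and of $-1<s<1$ goes through.

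The genuine gap is Step 2 for $s>1$: the claim that the $M^2$ term disappears is false. For every $s\geq -1$, $s\neq 0$, $s\notin\No$, the expansion \eqref{expLsN} contains $v(s)M^2$, where $v(s)$ is the analytic continuation of $I_s(\sigma)$. It vanishes only at the positive even integers, is nonzero on $(1,3)\setminus\{2\}$, and has poles at the odd integers. Block-summed, it gives $v(s)N^2$, i.e.\ a contribution $v(s)N^{1-s}$ to $T_{N,s}$. This is precisely the source of the $O(N^{1-s})$ error for $1<s<3$, and $s=2$ is singled out only because $v(2)=0$ (cf.\ \eqref{exactformLsN}).

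Your correction $c_1M^{s-1}$ block-sums to $c_1H(\eta(N);s-2)N^{s-1}$. This is $O(N^{s-1})$ for $s>1$ by Proposition \ref{propboundedseq}, so with your Step 2 the bookkeeping would ``prove'' an $O(N^{-2})$ error on all of $(1,3)$, which is false.

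Likewise, the logarithm at $s=3$ does not come from $\sum_{k<j}2^{n_j+n_k}$. That sum is at most $N^2/2$, because $2\sum_{k<j}\theta_k\theta_j=1-\sum_k\theta_k^2$. The logarithm comes from the breakdown of your Step 2 at odd $s$, where $v(s)$ and $\beta_1(s)\zeta(s-2)$ both have poles. There you must use the odd-integer expansion from the proof of Theorem \ref{secondthm-expansion}. Its term $\frac{1}{8\pi}M^2\log M$ block-sums to $\frac{1}{8\pi}\bigl(N^2\log N+K(\eta(N))N^2\bigr)$, and this is what produces $O(N^{-2}\log N)$.

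Your account of $s=-1$ is also off. We have $\mathcal{L}_{-1}(M)=2M\cot(\pi/(2M))=\frac{4}{\pi}M^2-\frac{\pi}{3}+O(M^{-2})$, which has no logarithmic term; it is simply the generic expansion, and its block sum is \eqref{expan2}. The $\log N$ in $T_{N,-1}$ is only the normalization matching the logarithmic growth of $H(\eta(N);-1)$ (Proposition \ref{propboundedseq2}), and $K$ plays no role there.

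Finally, at $s=1$ an $O(1)$ remainder per block gives only $O(p)=O(\log N)$ in $E_1(\alpha_{N,1})$, hence $O(N^{-2}\log N)$ in $T_{N,1}$. This cannot be sharpened. The constant term $\frac{1}{\pi}\beta_1(1)\zeta(-1)=-\frac{\pi}{72}$ of $\mathcal{L}_1(M)$ block-sums to $-\frac{\pi}{72}H(\eta(N);-1)$, which is unbounded. The paper's proof of Theorem \ref{secondthm-expansion} treats the $j=M+1$ term as bounded, overlooking that $s-2J=-1$ there. So the $O(N^{-2})$ in \eqref{expTN1} should read $O(N^{-2}\log N)$; do not try to reproduce it.
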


By the periodicity property \eqref{doubperiod1},  Theorem \ref{theo:leadasympTNs} readily yields the following 
\begin{corollary}\label{cor1}
	For any $s\geq -1$, the following asymptotic doubling periodicity holds:
	\[
	\lim_{N\rightarrow\infty}(T_{2N,s}-T_{N,s})=0.
	\]
\end{corollary}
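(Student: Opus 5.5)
The plan is to apply Theorem \ref{theo:leadasympTNs} at both $N$ and $2N$, subtract, and use the doubling periodicity \eqref{doubperiod1}, $\eta(2N)=\eta(N)$, so that the leading terms cancel exactly. What remains in each case is a difference of error terms, and it suffices to check that each one tends to $0$.

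For $s>-1$ with $s\neq 0,1$, formula \eqref{expTNs} gives
\[
T_{2N,s}-T_{N,s}=\frac{2\zeta(s)}{(2\pi)^{s}}\bigl(H(\eta(2N);s)-H(\eta(N);s)\bigr)+E_{2N}-E_N,
\]
where $E_N$ denotes the error term in \eqref{expTNs}. The first bracket vanishes by \eqref{doubperiod1}. Each of the listed error orders $N^{-1-s}$ ($s>-1$), $N^{1-s}$ ($1<s<3$), $N^{-2}$ and $N^{-2}\log N$ tends to $0$ in its range. Evaluating such an order at $2N$ only changes it by a bounded factor, so $E_{2N}-E_N\to0$.

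For $s=1$, formula \eqref{expTN1} gives
\[
T_{2N,1}-T_{N,1}=\frac{1}{\pi}\bigl(K(\eta(2N))-K(\eta(N))\bigr)+O(N^{-2}) = O(N^{-2}),
\]
which tends to $0$. For $s=0$, \eqref{TN0Phi} gives $T_{2N,0}=R(\eta(2N))=R(\eta(N))=T_{N,0}$, so here the difference is identically zero.

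The case $s=-1$ is the only one needing care, because the error term $O((\log N)^{-1})$ in \eqref{expTN-1} is of the same order as the displayed leading term. The two error terms do not cancel against each other, but each is individually $O((\log N)^{-1})$ and $\log(2N)\sim\log N$. The leading terms also need not cancel exactly, since the denominators $\log(2N)$ and $\log N$ differ; still, each is $O(1/\log N)$. This requires $H(\eta(N);-1)$ to be bounded in $N$. From \eqref{def:H} with $s=-1$,
\[
H(\vec\theta;-1)=p-\sum_{k<p}\theta_k^{-1}\sum_{j>k}\theta_j .
\]
This is not obviously bounded, since $p$ can grow. Writing $\theta_k^{-1}\sum_{j>k}\theta_j=\sum_{j>k}2^{n_j-n_k}$ for $\eta(N)$, each term is at most $1-2^{\,n_p-n_k}<1$, so $H$ is bounded below by $1$. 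For an upper bound, $p-\sum_{k<p}(\cdots)=1+\sum_{k<p}\bigl(1-\sum_{j>k}2^{n_j-n_k}\bigr)$ and each summand lies in $(0,1]$, so this crude bound is only $O(p)=O(\log N)$.

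This boundedness of $H(\eta(N);-1)$ is the main obstacle. If a crude bound is all I can get, I would instead use that $T_{N,-1}$ is itself bounded, as recalled from \cite{LopMc1,LopWag} in the introduction. Then $(\log N)T_{N,-1}=E_{-1}(\alpha_{N,-1})-N^2I_{-1}(\sigma)$, which by \eqref{expTN-1} equals $-\tfrac{\pi}{3}H(\eta(N);-1)+O(1)$. Comparing this at $2N$ and at $N$ gives
\[
(\log 2N)\,T_{2N,-1}-(\log N)\,T_{N,-1}=O(1),
\]
because the $H$-terms cancel exactly by \eqref{doubperiod1}. Hence
\[
T_{2N,-1}-T_{N,-1}=\frac{O(1)-(\log 2)\,T_{2N,-1}}{\log N}\to0,
\]
using the boundedness of $T_{2N,-1}$. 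This route avoids bounding $H$ and settles the case $s=-1$, which completes the proof for all $s\geq-1$.
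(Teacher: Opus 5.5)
Your proof is correct. For $s>-1$ it coincides with the paper's argument: the leading terms of Theorem \ref{theo:leadasympTNs} cancel exactly because $\eta(2N)=\eta(N)$, and the error terms vanish.

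For $s=-1$ your route differs. The paper simply subtracts the two instances of \eqref{expTN-1}, obtaining
\[
T_{2N,-1}-T_{N,-1}=\frac{\pi}{3}\,\frac{(\log 2)\,H(\eta(N);-1)}{\log N\,\log(2N)}+o(1).
\]
It then concludes with the growth bound $H(\eta(N);-1)\leq\tau_b(N)\leq\log(N+1)/\log 2$ from Proposition \ref{propboundedseq2}. That is exactly the ``crude'' bound $H=O(p)=O(\log N)$ you proved yourself, so your diagnosis of the obstacle is off. You do not need each leading term to be $O(1/\log N)$; that would indeed require bounded $H$, and $H(\eta(N);-1)$ is in fact unbounded by Proposition \ref{propboundedseq2}. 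You only need their difference to vanish, and the difference carries the extra factor $\log 2/\log(2N)$.

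Your alternative is also valid: multiply by $\log N$ so that the $H$-terms cancel exactly, then divide back using boundedness of $T_{N,-1}$. It has the appeal of a clean general principle: if $L_N=(\log N)T_N$ satisfies $L_{2N}-L_N=O(1)$ and $T_N$ is bounded, then $T_{2N}-T_N\to0$. But it does not really bypass the growth estimate. It imports boundedness of $T_{N,-1}$ from the earlier literature, and by \eqref{expTN-1} that boundedness is equivalent to $H(\eta(N);-1)=O(\log N)$. So the paper's route is shorter and self-contained, while yours replaces an explicit estimate you already had with an external citation.
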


Theorem \ref{theo:leadasympTNs} is a consequence of the stronger  
 Theorems \ref{firstthm-expansion} and \ref{secondthm-expansion} of Section \ref{section:EnergyExpansion}, which further provide, for every $s\geq -1, \ s\not=0$, a finite asymptotic expansion of $E_{s}(\alpha_{N,s})$ in terms of the functions $H$ and $K$ of  \eqref{def:H}--\eqref{def:K}. The larger the value of $s$, the more asymptotic terms that the expansion is able to provide. 
 
 Because of the primary role played by the functions $H$, $K$, and $R$ in the asymptotic behavior of the $s$-energy of greedy sequences, we make in Sections \ref{limit-points-functions-H-K-R} through \ref{continuity-functions-H-K-R} an extensive study of the properties of these functions with the ultimate goal of proving the following result. 
 \begin{theorem}\label{theo:density-interval}
 For every $s>-1$, the set of limit points of the sequence $(H(\eta(N);s))_{N=1}^\infty$ is a closed interval. The same is true of the sequences $(K(\eta(N)))_{N=1}^\infty$ and $(R(\eta(N)))_{N=1}^\infty$. 
 Therefore, it follows from Theorem \ref{theo:leadasympTNs} that for every $s> -1$, the set of  limit points of the sequence $(T_{N,s})_{N=2}^\infty$ is a closed interval.
 \end{theorem}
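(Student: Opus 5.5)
The plan is to identify the set of limit points of $(F(\eta(N)))_N$, for $F\in\{H(\cdot;s),K,R\}$, with the image of a compact connected parameter space under a continuous map.

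\textbf{Parametrization.} By \eqref{doubperiod1} only the "shape" of the binary expansion matters. Write $\eta(N)=(\theta_1,\ldots,\theta_p)$ with $\theta_k=2^{n_k}/N$ and $n_1>\cdots>n_p$. Set $x=N/2^{n_1}\in[1,2)$. Then the components are determined by the binary digits of $x$, and $\theta_k=2^{-m_k}/x$, where $m_k=n_1-n_k$ are the positions of the $1$-digits of $x$. The natural parameter space is therefore the set $\Omega$ of all infinite digit sequences $d=(d_0=1,d_1,d_2,\ldots)\in\{0,1\}^{\mathbb N}$ with leading digit one. Every $d$ defines the (possibly infinite) vector $\vec\theta(d)=(2^{-m}/x(d))_{m:\,d_m=1}$, where $x(d)=\sum_m d_m2^{-m}\in[1,2]$.

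\textbf{Extension and continuity.} First I will show that $H(\cdot;s)$ for $s>-1$, $K$ and $R$ extend to infinite vectors $\vec\theta(d)$. The series converge because $\theta_k\le 2^{-m_k}$ decays geometrically. In $R$, the sum $\sum(k-1)\theta_k$ converges, and $\theta_k\log\theta_k$ is summable. In $H$, the inner tails $\sum_{j>k}\theta_j\lesssim\theta_k$, so the double sum is dominated by $\sum\theta_k^{s+1}$, which converges since $s+1>0$. The same argument handles $K$.

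Next I will show that $G_F(d):=F(\vec\theta(d))$ is continuous on $\Omega$ in the product topology. Truncating $d$ after $M$ digits changes each component by a factor $1+O(2^{-M})$ and removes a tail of total mass $O(2^{-M})$. The resulting error is $O(2^{-M(s+1)})$ or $O(M2^{-M})$, which tends to $0$.

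\textbf{Identifying the limit set.} Finite sequences $d$ (finitely many ones) correspond exactly to the values $F(\eta(N))$, and each such value is attained for infinitely many $N$ (namely $N,2N,4N,\ldots$). Hence the set $L$ of limit points contains $G_F(\Omega_{\rm fin})$, and since $L$ is closed, $L\supseteq\overline{G_F(\Omega_{\rm fin})}=G_F(\Omega)$, using the density of finite sequences and the compactness of $\Omega$.

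Conversely, take $N_j\to\infty$ with $F(\eta(N_j))\to \ell$. By compactness the digit sequences converge along a subsequence to some $d\in\Omega$, and by continuity $\ell=G_F(d)$. So $L=G_F(\Omega)$, which is compact.

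\textbf{Main obstacle: connectedness.} $\Omega$ is a Cantor set, so $G_F(\Omega)$ need not be an interval. The fix is to pass through $x\in[1,2]$: I will show that $G_F(d)$ depends only on $x(d)$. The only ambiguity is at dyadic rationals, where $x$ has two expansions: one ending $\ldots1000\ldots$, the other ending $\ldots0111\ldots$. So the key step is a direct computation showing that $F$ takes the same value on both.

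\begin{itemize}
\item Replacing a final component $\theta$ by the infinite string $\theta/2,\theta/4,\ldots$ preserves the total mass, since $\sum_{j\ge1}\theta2^{-j}=\theta$.
\item For $H$, the needed identity is $\theta^{s+1}=\sum_j(\theta2^{-j})^{s+1}+2(2^s-1)\sum_j(\theta2^{-j})^{s}\theta2^{-j}$. It holds because $\sum_j 2^{-j(s+1)}(1+2^{s+1}-2)$ sums to $1$; I will check this geometric series carefully.
\item Similar telescoping checks are needed for $K$ and for $R$. For $R$, the $(k-1)$ weights generate the $2\log2$ terms, which should compensate the $\log$ terms.
\item The endpoints $x=1$ and $x=2$ must also match: $x=2$ arises from $1.111\ldots$, and it should agree with $x=1$ by the same identity.
\end{itemize}

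Once this is established, $G_F$ factors through a continuous function $g$ on $[1,2]$. Continuity of $g$ follows from the quotient map $\Omega\to[1,2]$ being closed, since $\Omega$ is compact. Then $L=g([1,2])$ is a compact connected set, that is, a closed interval.

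\textbf{Conclusion for $T_{N,s}$.} For $s>-1$ with $s\ne0,1$, Theorem \ref{theo:leadasympTNs} gives $T_{N,s}=c_sH(\eta(N);s)+o(1)$ with $c_s=2\zeta(s)(2\pi)^{-s}$. The cases $s=0$ and $s=1$ are affine in $R$ and $K$ respectively, with $o(1)$ errors, and affine images of intervals are intervals. So the limit set of $(T_{N,s})$ is an affine image of a closed interval, hence a closed interval (possibly degenerate, for instance when $\zeta(s)=0$).
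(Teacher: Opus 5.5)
Your proof is correct. Its backbone is the paper's:
\begin{itemize}
\item the limit set is the image of a compact set of limiting vectors under a continuous extension of $F$;
\item these vectors are parametrized by a real number through its binary expansion(s);
\item the decisive computation is that replacing a final component $\theta$ by $\theta/2,\theta/4,\ldots$ leaves the earlier tail sums, and hence $H(\cdot;s)$, $K$, $R$, unchanged.
\end{itemize}
That computation is exactly Proposition \ref{prop_uniqvalue}. Your identity for $H$ is the right one, and the analogous checks for $K$ and $R$ do go through.

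The difference is in the packaging. The paper works with $\mathcal{S}=\overline{\{\eta(N)\}}\subset\ell^\infty$. Before it can pass to a real parameter, it must characterize $\mathcal{S}$ (Propositions \ref{prop_19-7-4}--\ref{prop_twodiff}). It then proves continuity of $\mathcal{H}(\cdot,s)$, $\mathcal{K}$, $\mathcal{R}$ on $[1/2,1]$ through explicit one-sided limit statements (Lemmas \ref{lem_elem1} and \ref{lem_elem2}).

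Working on the Cantor space $\Omega$ of digit sequences, you never need to know what $\mathcal{S}$ looks like. Compactness of $\Omega$ and density of finite sequences give $L=G_F(\Omega)$ directly. Continuity of $g$ on $[1,2]$ then comes for free, since a continuous surjection from a compact space onto a Hausdorff space is a quotient map. This gives a leaner proof of Theorem \ref{theo:density-interval}.

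The paper's hands-on route buys two things:
\begin{itemize}
\item An explicit description of $\mathcal{S}$ and of the vectors $\vec\vartheta(x)$, $\vec\vartheta_\infty(x)$. This is reused in Section \ref{Convexity-H} and in the analysis of the extremal points of $\mathcal{H}(x,s)$.
\item One-sided limit lemmas that still apply in Section \ref{sect:density-optimal-potential} to $G$ and $\Lambda$. There the two vectors attached to a point with two binary expansions give different values, so a quotient argument is unavailable.
\end{itemize}

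Two small remarks. First, matching the values at $x=1$ and $x=2$ is unnecessary. Each has a unique preimage in $\Omega$, and you only use connectedness of $[1,2]$. Second, for $s>0$ the truncation error in $H$ is of order $2^{-M}$, coming from the rescaling of the head components, rather than $2^{-M(s+1)}$. It still tends to $0$, so your argument is unaffected.
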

 
 \begin{remark}
 	Regarding the case $s=-1$ not covered in Theorem \ref{theo:density-interval}, we do not know whether  the set of limit points of the sequence $(T_{N,-1})_{N=2}^{\infty}$ is an interval. However, we can indicate some estimates. From \eqref{expTN-1} and  \eqref{ineqHminusone} we obtain that
 	\[
 	\liminf_{N\rightarrow\infty} T_{N,-1}\geq -\frac{\pi}{3 \log 2}.
 	\]
 	This and formula (3.78) in \cite{LopMc1} yield  the bounds
 	\[
 	-\frac{\pi}{3 \log 2}\leq \liminf_{N\rightarrow\infty} T_{N,-1}\leq -\frac{\pi}{9 \log 2}.
 	\]
 	Formula (3.77) in \cite{LopMc1} indicates that $\limsup_{N\rightarrow\infty} T_{N,-1}=0$.
 \end{remark}

Let $\Delta_{s}$ denote the set of limit points of the sequence $(H(\eta(N);s))_{N=1}^{\infty}$ for $s>-1$, $s\neq 0, 1$. Let $\Delta_{0}$ and $\Delta_{1}$ denote the sets formed by the limit points of the sequences $(R(\eta(N)))_{N=1}^{\infty}$ and $(K(\eta(N)))_{N=1}^{\infty}$, respectively. We prove Theorem \ref{theo:density-interval} by showing that these sets $\Delta_s$ are the range of certain continuous functions defined on the interval $[1/2,1]$. Given $x\in [1/2,1]$, the number $1/x$ has at least one (and at most two, one finite and one infinite)  binary expansion of the form
\begin{align}\label{ecua7}
\frac{1}{x}=\frac{1}{2^{k_{1}}}+\frac{1}{2^{k_{2}}}+\cdots+\frac{1}{2^{k_{j}}}+\cdots,\qquad 0=k_1<k_2<\cdots,
\end{align}
which we use to define
\[
	\vec{\theta}(x)=\left(x,\frac{x}{2^{k_{2}}},\frac{x}{2^{k_{3}}},\ldots\right),\qquad x\in [1/2,1].
\] 
The functions $H(\vec{\theta};s)$, $K(\vec{\theta})$, and $R(\vec{\theta})$ of \eqref{def:H}--\eqref{def:Phi} have  natural extensions to a broader class of infinite vectors $\vec{\theta}$ that, in particular, contains the vectors  $\vec{\theta}(x)$, $x\in [1/2,1]$ (see Definition \ref{def:extensionofHKR}). With these extensions at hand, we define for all $s>-1$ and $x\in [1/2,1]$
\[
\mathcal{H}(x,s):= H(\vec{\theta}(x);s),\qquad \mathcal{K}(x):=K(\vec{\theta}(x)),\qquad \mathcal{R}(x):=R(\vec{\theta}(x)).
\]  

It is remarkable that these functions are well-defined (i.e. independent of the choice of the vector $\vec{\theta}(x)$ for each $x$)  and \emph{continuous} in the variable $x$. We shall demonstrate in Section \ref{continuity-functions-H-K-R} that 
\[
\Delta_0=\{\mathcal{R}(x):1/2\leq x\leq 1\},
\]
\[
\Delta_1=\{\mathcal{K}(x):1/2\leq x\leq 1\}=[0,\kappa],
\]
and 
\[
\Delta_s=\{\mathcal{H}(x,s):1/2\leq x\leq 1\}=\begin{cases}
	[d_s,1],&\quad 0<s<1,\\
	[1,d_s], &\quad -1<s<0\  or \  s>1,
	\end{cases}
\]
with $\kappa=\max_{1/2\leq x\leq 1}\mathcal{K}(x)>0$ and
\begin{align*}
d_s={} &\begin{cases}\min_{1/2\leq x\leq 1}\mathcal{H}(x,s)<1,& \quad 0<s<1,\\
\max_{1/2\leq x\leq 1}\mathcal{H}(x,s)>1, &\quad -1<s<0\ or\  s>1.
\end{cases}
\end{align*}
The interval $\Delta_0$ can be given explicitly. Indeed, in \cite[Thm. 1.2]{LopWag} it was proved that	\begin{equation}\label{boundsTN0}
	0\leq T_{N,0}<\log(4/3),\qquad \mbox{for all}\,\,N\geq 2,
\end{equation}
where the upper bound is best possible since 
\[
\limsup_{N\rightarrow\infty} T_{N,0}=\log(4/3).
\]
On the other hand, the logarithmic energy of $N$ equally spaced points on the unit circle equals $-N\log N$ and the points in the configuration $\alpha_{2^{k},0}$ are equally spaced, so that the lower bound in \eqref{boundsTN0} is attained for every $N=2^{k}$, $k\in\mathbb{N}$. Therefore, since $T_{N,0}=R(\eta(N))$ for all $N\geq 2$,  we deduce that 
\[
\Delta_{0}=[0,\log(4/3)].
\]
\begin{figure}
	\begin{center}	\includegraphics[scale=.5]{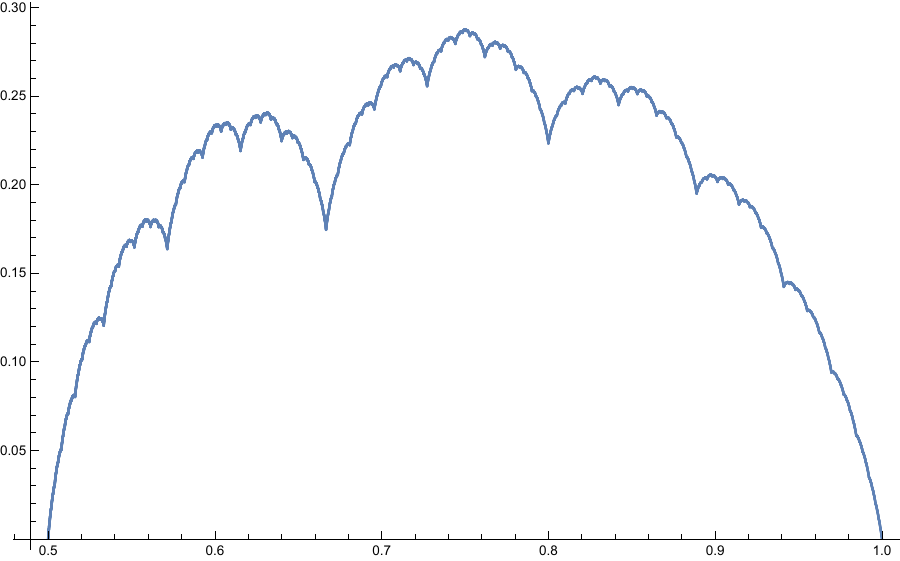}
	\end{center}
	\caption{Plot of $\mathcal{R}(x)$ for $x\in \mathcal{P}_{16}$. } 
\end{figure}
\begin{figure}
	\begin{center}	\includegraphics[scale=.5]{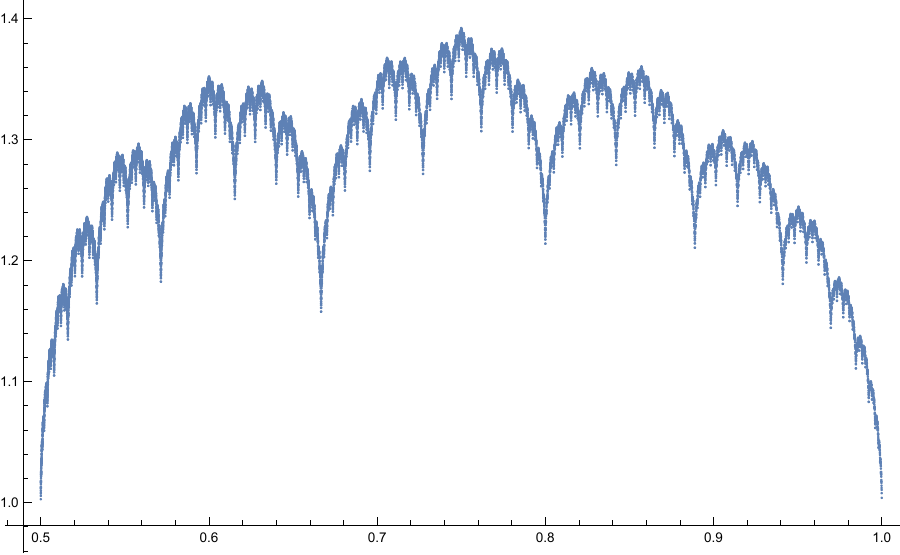}
	\end{center}
	\caption{Plot of $\mathcal{H}(x,-1/2)$ for $x\in \mathcal{P}_{16}$. This yields an approximate value of $d_{-1/2}\approx d_{-1/2,16}=1.3924456\ldots$.} 
\end{figure}

\begin{figure}
	\begin{center}	\includegraphics[scale=.5]{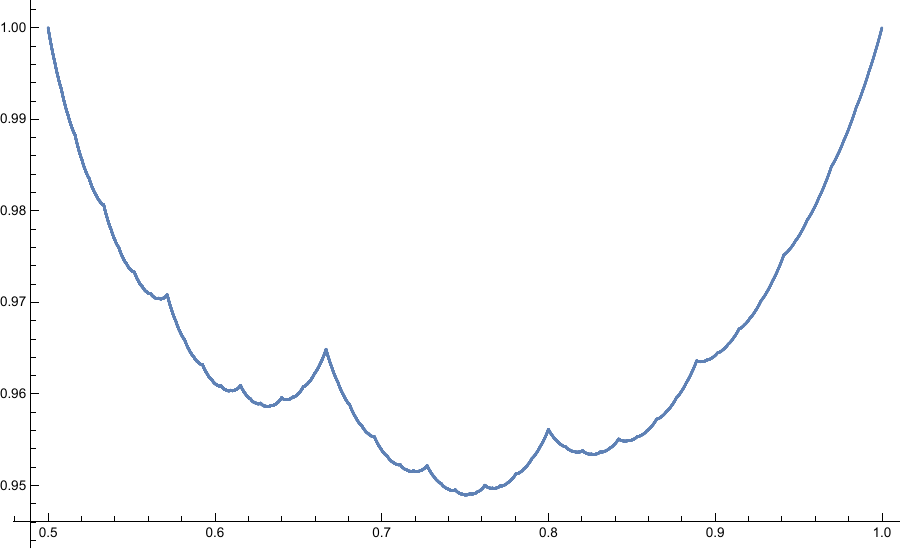}
	\end{center}
	\caption{Plot of $\mathcal{H}(x,1/3)$ for $x\in\mathcal{P}_{16}$. This yields an approximate value of $d_{1/3}\approx d_{1/3,16}=0.9489466\ldots$.} 
\end{figure}

\begin{figure}
	\begin{center}	\includegraphics[scale=.5]{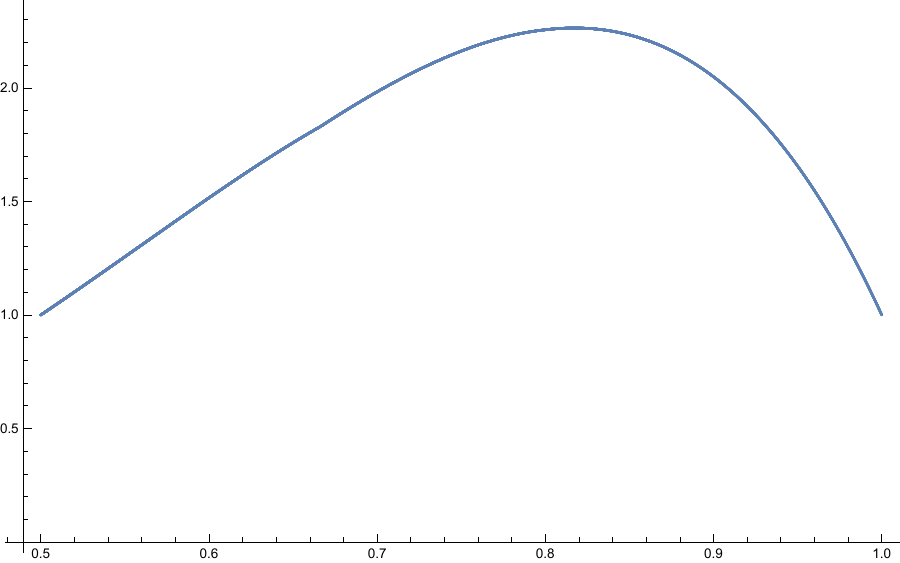}
	\end{center}
	\caption{Plot of $\mathcal{H}(x,7/2)$ for $x\in \mathcal{P}_{16}$. This yields an approximate value of $d_{7/2}\approx d_{7/2,16}=2.2640277\ldots$.} 
\end{figure}

\begin{figure}
	\begin{center}	\includegraphics[scale=.5]{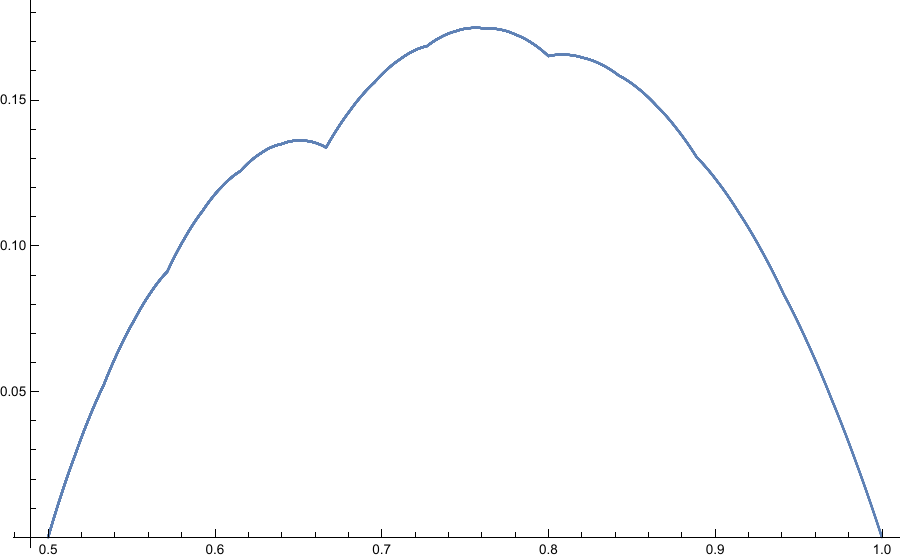}
	\end{center}
	\caption{Plot of $\mathcal{K}(x)$ for $x\in\mathcal{P}_{16}$. This yields an approximate value of $\kappa\approx0.1747397$.} 
\end{figure}
The exact values of the constants $\kappa$  and $d_{s}$ are unknown to us, but they can  be computed approximately as follows. For every integer $M\geq 1$, let $\mathcal{P}_M$ be the collection of points
\begin{align}\label{def:xMn}
x_{M,n}:=\frac{2^M}{2^M+2n+1},\qquad  n=0, 1,\ldots,2^{M-1}-1.
\end{align}
Taken collectively, the sets $\mathcal{P}_M$ contain  precisely those points $x\in(1/2,1)$ such that $1/x$ has a binary representation of the form \eqref{ecua7} with finitely many terms. Let us define 
\[
d_{s,M}:=\begin{cases}\min_{x\in \mathcal{P}_M}\mathcal{H}(x,s), & \ 0<s<1,\\
	\max_{x\in \mathcal{P}_M}\mathcal{H}(x,s), & \ -1<s<0\ or \ s\geq 1,
		\end{cases}
\] 
and 
\[
\kappa_{M}:=\max_{x\in\mathcal{P}_{M}}\mathcal{K}(x).
\]
Because the distance between two consecutive points of $\mathcal{P}_M\cup \{1/2,1\}$ is less than $2^{-M+1}$, it follows that 
\[
\lim_{M\to\infty}d_{s,M}=d_s,\qquad \lim_{M\rightarrow\infty} \kappa_{M}=\kappa.
\]
Estimates on the speed of convergence in the first limit are provided by our next result.
\begin{theorem}\label{approxthm}For all $M\geq 1$, we have
\begin{align}\label{ecua12}
	0\leq d_{s,M}-	d_s\leq \frac{2^s}{2^{M-1}},\qquad 0<s<1,
\end{align}	
\begin{align}\label{ecua13}
	0\leq d_s-d_{s,M}\leq \frac{2^s}{2^{M-1}},\qquad s>1,
\end{align}	
\begin{align}\label{ecua14}
	0\leq d_s-d_{s,M}\leq \frac{2^{(1-M)(s+1)}}{(\log 2)(2^{s+1}-1)},\qquad -1<s<0.
\end{align}	
\end{theorem}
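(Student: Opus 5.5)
The lower bounds in \eqref{ecua12}--\eqref{ecua14} are immediate from the definitions. Since $\mathcal{P}_M\subset[1/2,1]$, we have $\min_{\mathcal{P}_M}\mathcal{H}(\cdot,s)\geq\min_{[1/2,1]}\mathcal{H}(\cdot,s)$ for $0<s<1$. Likewise $\max_{\mathcal{P}_M}\mathcal{H}(\cdot,s)\le\max_{[1/2,1]}\mathcal{H}(\cdot,s)$ in the other ranges. These give $d_{s,M}\ge d_s$ and $d_s\ge d_{s,M}$ respectively, using the continuity of $x\mapsto\mathcal{H}(x,s)$ established in Section~\ref{continuity-functions-H-K-R}, which guarantees the extrema over $[1/2,1]$ are attained.

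For the upper bounds I fix an extremal point $x^*\in[1/2,1]$ with $\mathcal{H}(x^*,s)=d_s$ and produce a point $y\in\mathcal{P}_M$ whose value is close. Write $1/x^*=\sum_j 2^{-k_j}$ as in \eqref{ecua7}, and let $1/y$ be obtained by keeping the digits with $k_j\le M-1$ and setting the digit at position $M$ equal to $1$. Then $1/y=1+(2n+1)/2^M$ for some $n$, so $y\in\mathcal{P}_M$, and $|1/y-1/x^*|\le 2^{-M}$. The endpoint cases $x^*\in\{1/2,1\}$ are handled by the same truncation or by the limiting vectors. The vectors $\vec\theta(x^*)$ and $\vec\theta(y)$ share the exponents $k_1,\ldots,k_m$ of all digits up to position $M-1$; they differ only in the common scale factor ($x^*$ versus $y$) and in the tail, whose entries are bounded by $x\,2^{-M}$ and whose total mass is at most $x\,2^{-(M-1)}$.

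I then split $\mathcal{H}(x^*,s)-\mathcal{H}(y,s)$ into three parts. The first is the change due to rescaling the common head by $x^*/y$. The second is the change in the tails $\sum_{j>k}\theta_j$ multiplying the head terms $\theta_k^s$. The third is the contribution of the tail self-terms $\sum\theta_k^{s+1}$ and the tail cross terms.

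For $s>0$ I expect the head rescaling and the tail contributions to combine into a single telescoping bound. Here $\sum_k\theta_k^s\le 2^s\cdot(\text{const})$, because the $\theta_k\le1$ are dyadically decreasing, and the tails change by at most $2^{-(M-1)}$. The factor $2(2^s-1)$ should then cancel against $\sum_k 2^{-ks}$ to give exactly $2^s/2^{M-1}$. Rather than comparing with $y$ directly, it may be cleaner to use the identity $\sum_k\theta_k=1$ for both vectors, which makes the rescaling term vanish to first order.

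For $-1<s<0$ the head factor $\theta_k^s$ is no longer bounded by $1$. Instead I will bound the tail terms $\sum_{j\ge M}(x2^{-j})^{s+1}\le 2^{(1-M)(s+1)}/(2^{s+1}-1)$ by a geometric sum. The factor $1/\log 2$ should come from a mean-value/derivative estimate in the scale variable, namely $\frac{d}{dt}2^{-t(s+1)}$ integrated over the tail.

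The main obstacle is the middle step: obtaining the sharp constant $2^s/2^{M-1}$ with no loss. The cross term involves $\theta_k^s$ times tail sums over all head indices $k$, and a naive bound loses a factor of order $M$ or $\sum_k 2^{-k_k s}$. To handle it I would first try to rewrite $H$ recursively, peeling off the first component in the form $H(\vec\theta)=\theta_1^{s+1}+2(2^s-1)\theta_1^s(1-\theta_1)+H(\text{rest})$, and then compare $x^*$ and $y$ inductively. If the direct comparison from $x^*$ fails to give the sharp constant, I would instead choose the extremal $x^*$ as one whose expansion is finite, as a limit along $\mathcal{P}_{M'}$, and compare levels $M'$ and $M$.
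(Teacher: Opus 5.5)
Your reduction is correct. The lower bounds are immediate because $\mathcal{P}_M\subset[1/2,1]$. For the upper bounds it suffices to find, for an extremizer $x^*$, a point $y\in\mathcal{P}_M$ with a one-sided bound on $\mathcal{H}(y,s)-\mathcal{H}(x^*,s)$ (or its negative), and your truncation $y$ is the right candidate. But that estimate, which is the entire content of the theorem, is never proved: you state what you ``expect'' and list strategies to try if it fails.

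The sketch also does not add up as it stands. Write $T=\sum_{k_j\ge M}2^{-k_j}$ for the tail. By your own heuristic, the change in the head--tail cross terms, $2(2^s-1)(x^*)^{s+1}(T-2^{-M})\sum_{\mathrm{head}}2^{-k_js}$, already uses essentially the whole budget $2^s/2^{M-1}$. On top of that:
\begin{itemize}
\item the rescaling term $\bigl((x^*/y)^{s+1}-1\bigr)\mathcal{H}(y,s)$ is of size about $(s+1)2^{-M}\mathcal{H}(y,s)$;
\item the tail self-terms contribute about $2^{-M(s+1)}$.
\end{itemize}
So the claimed constant could only come from a cancellation between these terms, which you do not exhibit.

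For $s>1$ a further input is missing. The paper absorbs the factor $s+1$ in the rescaling term using the a priori bound $\mathcal{H}(\cdot,s)\le 2(2^s-1)/(s+1)$, i.e.\ \eqref{second-bound-H}, obtained from Theorem \ref{thm-deriv}. The bound $2^{s+1}-1$ of Proposition \ref{propboundedseq} is too weak by a factor of order $s+1$, and nothing in your outline plays this role.

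For $-1<s<0$ you treat only the tail self-terms. The cross-term changes $\theta_k^s\,\Delta b_k$, with $\theta_k^s$ as large as about $2^{(M-1)|s|}$, together contribute a term of the same order $2^{-M(s+1)}$. Also, in the paper the $1/\log 2$ does not come from an integral over the tail. It comes from $(s+1)/(2^{s+1}-1)<1/\log 2$, applied to the rescaling term together with the bound \eqref{ecua6}.

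The idea you are missing is to move one binary digit at a time. Your $y$ is precisely the level-$M$ ancestor of $x^*$ in a binary tree where $x_{M,n}$ has two children:
\begin{itemize}
\item $x_{M+1,2n+1}$: keep the digit $2^{-M}$ of $1/x_{M,n}$ and append $2^{-M-1}$;
\item $x_{M+1,2n}$: replace $2^{-M}$ by $2^{-M-1}$.
\end{itemize}
From parent to child only the last one or two digits change, besides the common scale factor. So $H$ can be compared exactly, which gives \eqref{ecua8}--\eqref{ecua9}. Combine these with the ratio bounds \eqref{ecua10} and the a priori bounds: $\mathcal{H}\le 1$ for $0<s<1$ (from \eqref{convexitybounds}), \eqref{second-bound-H} for $s>1$, and \eqref{ecua6} for $-1<s<0$. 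The result is a one-level error of at most $2^s/2^M$, respectively $1/\bigl((\log 2)2^{M(s+1)}\bigr)$, in the required direction.

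Summing these geometric errors over the levels $M,M+1,\ldots$ gives exactly the constants in the theorem. The paper does this summation for $d_{s,M}$ itself, using $d_{s,M}\to d_s$. You could equally sum along the chain of ancestors $y_M,y_{M+1},\ldots\to x^*$ and invoke the continuity of $\mathcal{H}(\cdot,s)$.

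Your fallback of comparing two arbitrary levels $M'$ and $M$ directly runs into the same bookkeeping problem as the one-shot comparison. It is the restriction to consecutive levels that makes the exact identities available.
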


In the last section of the paper we briefly revisit some aspects of the asymptotic  behavior as $N\to\infty$ of the extremal value $U_{N,s}(a_N)$. This behavior has  been previously  investigated in \cite{LopMc1,LopMc2,LopMin,LopSaff}. In particular, it was shown in \cite{LopMc1,LopMc2} that for every $s>-2$, the sequence $(F_{N,s})_{N=1}^\infty$  defined as 
\begin{align}\label{defF_N,s}\begin{split}
		F_{N,s}:=\begin{cases}
		 U_{N,s}(a_{N})-N I_{s}(\sigma), & -2<s<0,\\[0.3em]
			\displaystyle -\frac{U_{N,0}(a_{N})}{\log(N+1)}, & s=0,\\[0.9em]
			\displaystyle\frac{U_{N,s}(a_N)-NI_s(\sigma)}{N^s},& 0<s<1,\\[0.5em]
			\displaystyle\frac{U_{N,1}(a_{N})-\pi^{-1} N\log N}{N}, & s=1,\\[0.5em]
			\displaystyle\frac{U_{N,s}(a_N)}{N^s}, & s>1
		\end{cases}
	\end{split}
\end{align}
is bounded and divergent. More recently in \cite[Theorem 1.2]{LopMin}, it was shown that if  $s>0$ and $s\not=1$, then (as $N\to\infty$)
\begin{align}\label{thirdcase}
	\begin{split}
		F_{N,s}	={} & (2^{s}-1)\frac{2\zeta(s)}{(2\pi)^s}G(\eta(N);s)+\begin{cases}
			O(N^{-s}), & 0<s<1,\\
			O(N^{1-s}),& 1<s<3,\ s\not=2,\\
			O(N^{-2}),	& s=2, \ \mathrm{or}\   s>3,\\
			O(N^{-2}\log N), & s=3,
		\end{cases}
	\end{split}
	\end{align}
while if $s=1$, then 
\begin{align}\label{secondcase}
	\begin{split}
		F_{N,1}= {} &
		\frac{1}{\pi}(\gamma+\log(8/\pi) +\Lambda(\eta(N)))+O(N^{-1}), 	
	\end{split}
\end{align} 
where  
\begin{align}\label{def:G-Lambda-functions}
	G(\vec{\theta};s):={} \sum_{k=1}^{p}\theta_{k}^{s}\quad \mathrm{and}\quad 	\Lambda(\vec{\theta}):={} \sum_{k=1}^{p}\theta_{k}\log\theta_{k}
\end{align}
are functions defined for every vector $\vec{\theta}=(\theta_{1},\ldots,\theta_{p})$ of   positive components. This result, which can be viewed as a counterpart to Theorem \ref{theo:leadasympTNs}, is obtained as a byproduct of a finite asymptotic expansion for $U_{N,s}(a_N)$  given in  Theorems 1.4 and 1.5 of \cite{LopMin},  which in turn can be viewed as the counterparts of  Theorems \ref{firstthm-expansion} and \ref{secondthm-expansion} of Section \ref{section:EnergyExpansion}. Despite these similarities, the study of the  sequence $(T_{N,s})$ is visibly more difficult than that of the sequence $(F_{N,s})$ because  the expressions that define the functions  $H$, $K$, and $R$ are substantially more complex than those that define the functions $G$ and $\Lambda$.

It was shown in \cite{LopMin} that for any $s\geq 0$, the limit points of the sequence $(F_{N,s})_{N=1}^\infty$ fill a closed  interval. For the case $s>0$, we will provide in Section \ref{sect:density-optimal-potential} a new, more transparent proof of this result based on the same ideas we have developed for the proof of Theorem \ref{theo:density-interval}. In that same section we show that in the range $-2<s<0$, the sequence $(F_{N,s})_{N=1}^{\infty}$ is Ces\`{a}ro summable.

\section{Expansion of the energy}\label{section:EnergyExpansion}
Of special importance for our asymptotic analysis is the energy
\begin{equation}\label{def:LsN}
	\mathcal{L}_{s}(N):=E_{s}((z_{1,N},\ldots,z_{N,N}))
\end{equation}
of the configuration formed by the $N$th roots of unity
\[
z_{k,N}:=\exp(2\pi i(k-1)/N),\qquad 1\leq k\leq N.
\]
It is easy to see that
\[
\mathcal{L}_{s}(N)=\begin{cases}
	2^{-s} N\sum_{k=1}^{N-1}\left(\sin\frac{\pi k}{N}\right)^{-s} & s\neq 0,\\
	-N\log N & s=0.
\end{cases}
\]
In the case $N=1$, we understand $\mathcal{L}_{s}(1)=0$.

An asymptotic expansion for  $\mathcal{L}_{s}(N)$ as $N\to\infty$ was derived in  \cite{BHS}. This expansion   involves the Riemann zeta function $\zeta(s)$, the quantity 
\[
v(s):=\frac{2^{-s}\,\Gamma(\frac{1-s}{2})}{\sqrt{\pi}\,\Gamma(1-\frac{s}{2})}, \qquad s\in \mathbb{C}\setminus \{1,3,5,\ldots\},
\]
as well as the coefficients $\beta_n(s)$ of the Maclaurin expansion of the function $\mathrm{sinc}^{-s}z$:
\[
\mathrm{sinc}^{-s}\,z:=\left(\frac{\sin(\pi z)}{\pi z}\right)^{-s}=\sum_{n=0}^{\infty}\beta_{n}(s) z^{2n},\qquad |z|<1.
\]
Note that  $\beta_{0}(s)=1$, and that in view of \eqref{energyeqmeas},  $v(s)=I_{s}(\sigma)$ for $-2<s<1$, $s\not=0$. This function $v(s)$ is analytic in the indicated domain, and vanishes at every positive and even integer. 

In this section we use the following notation. $\mathbb{N}_{\mathrm{odd}}$ denotes the set of all odd positive integers, $\lfloor x\rfloor$ denotes the integer part of $x$, and $(x)_{n}$ denotes the Pochhammer symbol. 

It was proved in \cite[Thm. 1.1]{BHS} that for $s\geq -1$ such that $s\neq 0$, $s\notin\mathbb{N}_{\mathrm{odd}}$, and for any integer $J\geq 0$,
\begin{equation}\label{expLsN}
	\mathcal{L}_{s}(N)=v(s) N^{2}+\frac{2}{(2\pi)^{s}}\sum_{j=0}^{J}\beta_{j}(s)\,\zeta(s-2j)\,N^{s-2j+1}+O(N^{s-2J-1}),\qquad N\rightarrow\infty,
\end{equation} 
while if $s$ is a positive even integer, then $v(s)=0$ and \eqref{expLsN} holds in the exact form
\begin{equation}\label{exactformLsN}
	\mathcal{L}_{s}(N)=\frac{2}{(2\pi)^{s}}\sum_{j=0}^{s/2}\beta_{j}(s)\,\zeta(s-2j)\,N^{s-2j+1}.
\end{equation}

We shall use \eqref{expLsN} and \eqref{exactformLsN} to obtain an asymptotic expansion for the energy 	$E_{s}(\alpha_{N,s})$ of the first $N$ points $\alpha_{N,s}=(a_0,\ldots,a_{N-1})$ of a greedy $s$-energy sequence. Suppose a positive integer $N\in\mathbb{N}$ has the binary decomposition
\begin{equation}\label{bindecomp}
	N=2^{n_{1}}+2^{n_{2}}+\cdots+2^{n_{p}},\qquad n_{1}>n_{2}>\cdots>n_{p}\geq 0,
\end{equation}
in which case we write 
\begin{equation}\label{def:arithfunctau}
	\begin{aligned}
		\tau_{b}(N) & :=p
	\end{aligned}
\end{equation}
to denote the number of $1$'s in the binary representation of $N$. Note that this arithmetic function satisfies the doubling periodicity property
\begin{equation}\label{doubperiod}
	\begin{aligned}
		\tau_{b}(N) & =\tau_{b}(2N).
	\end{aligned}
\end{equation}
In \cite{LopMc1,LopWag} it was shown that the energy $	E_{s}(\alpha_{N,s})$ can be expressed in terms of the binary decomposition \eqref{bindecomp} of $N$ by the formula 
\begin{equation}\label{energybin}
	E_{s}(\alpha_{N,s})=\sum_{k=1}^{p-1}\left(\sum_{t=k+1}^{p} 2^{n_{t}-n_{k}}\right)\mathcal{L}_{s}(2^{n_{k}+1})+\sum_{k=1}^{p}\left(1-\sum_{t=k+1}^{p} 2^{n_{t}-n_{k}+1}\right)\mathcal{L}_{s}(2^{n_{k}}).
\end{equation}
In order to extricate the asymptotic behavior of $	E_{s}(\alpha_{N,s})$ from this representation, we first need to establish a couple of propositions. Recall that we used  \eqref{bindecomp} to define  \begin{equation}\label{def:arithfunc}
	\begin{aligned}
		\eta(N) & :=\left(\frac{2^{n_{1}}}{N},\frac{2^{n_{2}}}{N},\ldots,\frac{2^{n_{p}}}{N}\right).
	\end{aligned}
\end{equation}

\begin{proposition}\label{propboundedseq}
For each $s>-1$, the sequence $\{H(\eta(N);s)\}_{N\geq 1}$
is bounded. The sequences $\{K(\eta(N))\}_{N\geq 1}$ and $\{R(\eta(N))\}_{N\geq 1}$ are also bounded.
\end{proposition}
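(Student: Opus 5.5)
The plan is to bound each of $H$, $K$, $R$ directly, using only the geometric structure of $\eta(N)$. Write $\eta(N)=(\theta_1,\ldots,\theta_p)$ with $\theta_k=2^{n_k}/N$. Since $n_1>n_2>\cdots>n_p$, we have $\theta_{k+1}\le\theta_k/2$, hence $\theta_k\le 2^{-(k-1)}\theta_1$. Moreover $\sum_k\theta_k=1$ and $\theta_1\in(1/2,1]$. From these, the tail sums satisfy
\[
\sum_{j=k+1}^{p}\theta_j\le \theta_k\Bigl(\tfrac12+\tfrac14+\cdots\Bigr)<\theta_k .
\]
These two facts, geometric decay of the components and tails dominated by the preceding component, will be the only inputs.

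For $H(\eta(N);s)$ with $s>-1$, I bound each of the two sums. The first sum is $\sum_k\theta_k^{s+1}\le\sum_k (2^{-(k-1)})^{s+1}$, a convergent geometric series because $s+1>0$. In the second sum, each term satisfies $\theta_k^s\sum_{j>k}\theta_j\le\theta_k^{s+1}$, so it is bounded by the same series. The case $-1<s<0$ needs care, because then $\theta_k^s$ is large when $\theta_k$ is small. The product $\theta_k^s\cdot\theta_k=\theta_k^{s+1}$ still decays geometrically, so the bound survives. Altogether $|H(\eta(N);s)|\le (1+2|2^s-1|)\,(1-2^{-(s+1)})^{-1}$, uniformly in $N$.

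For $K$, the function $t\mapsto t^2|\log(t/4)|$ is bounded on $(0,1]$ by $C t^{3/2}$, so $\sum_k\theta_k^2|\log(\theta_k/4)|\le C\sum_k 2^{-3(k-1)/2}$. For the cross term, the estimate
\[
\Bigl(\sum_{j>k}\theta_j\Bigr)\theta_k|\log\theta_k|\le\theta_k^2|\log\theta_k|\le C'\theta_k^{3/2}
\]
again produces a convergent geometric series. For $R$, the term $\sum_k\theta_k|\log\theta_k|\le C''\sum_k\theta_k^{1/2}$ is bounded geometrically. The remaining term $\sum_k(k-1)\theta_k\le\sum_k(k-1)2^{-(k-1)}=2$ is also bounded.

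The only delicate point is the range $-1<s<0$ in $H$, where negative powers appear. The pairing $\theta_k^s\cdot\sum_{j>k}\theta_j\le\theta_k^{s+1}$ handles it, so I expect no real obstacle. For $R$, one could alternatively invoke the identity $T_{N,0}=R(\eta(N))$ from Theorem~\ref{theo:leadasympTNs} together with \eqref{boundsTN0}. However, the direct bound above avoids any circularity, since Theorem~\ref{theo:leadasympTNs} is presumably proved using this proposition.
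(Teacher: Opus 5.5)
Your proof is correct. It differs from the paper's mainly in being fully self-contained.

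\textbf{How the paper handles $H$.} The paper splits by the sign of $s$. For $s>0$ it writes each term as $\theta_k^s\bigl(\theta_k+2(2^s-1)\sum_{j>k}\theta_j\bigr)$ and bounds the bracket by $2^{s+1}-1$. It is then left needing a bound on $\sum_k\theta_k^s$, which for $0<s<1$ it imports from the sharp inequality $\sum_k\theta_k^s\le(2^s-1)^{-1}$ of \cite{LopMin}. Only for $-1<s<0$ does it use your pairing $\theta_k^s\sum_{j>k}\theta_j\le\theta_k^{s+1}$ together with $\theta_k\le 2^{-(k-1)}$.

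\textbf{How the paper handles $K$ and $R$.} The paper relies on the sharp bound $\sum_k\theta_k|\log\theta_k|<\log 4$, also taken from \cite{LopMin}. You instead absorb each logarithm into a loss of $t^{1/2}$ and sum a geometric series.

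\textbf{Comparison.} Your argument treats every $s>-1$ at once and uses nothing beyond $\theta_{k+1}\le\theta_k/2$ and $\sum_{j>k}\theta_j<\theta_k$. Even the trivial bound $\sum_k\theta_k^s\le 2^s/(2^s-1)$ would have sufficed in the paper's $0<s<1$ case. The paper's version buys explicit, essentially sharp constants and the positivity of $H(\eta(N);s)$. In particular, its bound $0<H(\eta(N);s)<2^{s+1}/(2^{s+1}-1)$ for $-1<s<0$ is reused as \eqref{ecua6} in the proof of Theorem~\ref{approxthm}. Your coarser constant $(1+2|2^s-1|)(1-2^{-(s+1)})^{-1}$ would not serve there as written.

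\textbf{On circularity.} Your worry is harmless but unfounded for $R$. The identity $T_{N,0}=R(\eta(N))$ is derived directly from formula (46) of \cite{LopWag} and does not depend on this proposition. So the alternative route via \eqref{boundsTN0} would also have been legitimate.
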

\begin{proof}
Let $N\in\mathbb{N}$ have the binary decomposition \eqref{bindecomp}, and let
\[
\theta_{k}=\frac{2^{n_{k}}}{N},\qquad 1\leq k\leq p.
\] 
Observe that $\sum_{k=1}^p\theta_{k}=1$.  
Suppose first that $s>0$. Since $2^{s}-1>0$, for each $1\leq k\leq p$ we obtain
\begin{equation}\label{estim1}
0< 2(2^{s}-1)\left(\sum_{j=k+1}^{p}\theta_{j}\right)+\theta_{k}< 2(2^{s}-1)+1=2^{s+1}-1.
\end{equation}
If $s\geq 1$, then obviously $\sum_{k=1}^{p}\theta_{k}^{s}\leq \sum_{k=1}^{p}\theta_{k}=1$. Hence, using \eqref{def:H}, we obtain
\begin{align*}
0< H(\eta(N);s)< 2^{s+1}-1,\qquad s\geq 1.
\end{align*}
Assume now that $0<s<1$. By Theorem 1.1 in \cite{LopMin} we have
\[
\sum_{k=1}^{p}\theta_{k}^{s}\leq \frac{1}{2^{s}-1},
\]
which together with \eqref{estim1} imply that 
\[
0<H(\eta(N);s)< \frac{2^{s+1}-1}{2^{s}-1}.
\]
Now suppose that $-1<s<0$. For each $1\leq k\leq p$ we have
\begin{equation}\label{ineqtheta}
\sum_{j=k+1}^{p}\theta_{j}=\frac{1}{N}\sum_{j=k+1}^{p}2^{n_{j}}\leq\frac{1}{N}\sum_{m=0}^{n_{k+1}}2^{m}=\frac{2^{n_{k+1}+1}-1}{N}\leq \frac{2^{n_{k}}-1}{N}< \theta_{k}.
\end{equation}
Since $-1<2(2^{s}-1)<0$, it follows that
\[
0<\theta_{k}+2(2^{s}-1)\sum_{j=k+1}^{p}\theta_{j}\leq \theta_{k}.
\]
Therefore
\[
0<H(\eta(N);s)=\sum_{k=1}^{p}\theta_{k}^{s}\left(2(2^{s}-1)\left(\sum_{j=k+1}^{p}\theta_{j}\right)+\theta_{k}\right)\leq \sum_{k=1}^{p}\theta_{k}^{s+1}.
\]
Also note that
\begin{equation}\label{boundforthetak}
\theta_{k}=\frac{2^{n_{k}}}{N}\leq \frac{2^{n_{k}}}{2^{n_{1}}}\leq 2^{-(k-1)}
\end{equation}
and so
\[
\sum_{k=1}^{p}\theta_{k}^{s+1}\leq \sum_{k=1}^{p}2^{-(k-1)(s+1)}< \sum_{m=0}^{\infty}2^{-(s+1)m}=\frac{2^{s+1}}{2^{s+1}-1}.
\]
We conclude that
\begin{align}\label{ecua6}
0<H(\eta(N);s)< \frac{2^{s+1}}{2^{s+1}-1},\qquad -1<s<0.
\end{align}
Observe that $H(\eta(N);0)=1$ for all $N\in\mathbb{N}$.

Since $0<\theta_{k}\leq 1$, we have $\log\theta_{k}\leq 0$, and by Theorem 3.1 in \cite{LopMin} we know that $\sum_{k=1}^{p}\theta_{k}\log\theta_{k}>-\log 4$, hence
\begin{equation}\label{knownbound}
\sum_{k=1}^{p}\theta_{k}|\log\theta_{k}|<\log 4.
\end{equation}
Therefore
\[
\left|\sum_{k=1}^{p}\theta_{k}^{2}\log(\theta_{k}/4)\right|\leq \sum_{k=1}^{p}\theta_{k}^{2}\,|\log\theta_{k}|+\log(4)\sum_{k=1}^{p}\theta_{k}^{2}\leq \sum_{k=1}^{p}\theta_{k}\,|\log \theta_{k}|+\log(4)\sum_{k=1}^{p}\theta_{k}\leq 2\log 4.
\]
Using \eqref{ineqtheta}, we also have
\[
\left|\sum_{k=1}^{p-1}\sum_{j=k+1}^{p}\theta_{j}\,\theta_{k}\log\theta_{k}\right|=\sum_{k=1}^{p-1}\sum_{j=k+1}^{p}\theta_{j}\,\theta_{k}|\log\theta_{k}|\leq \sum_{k=1}^{p-1}\theta_{k}^{2}\,|\log\theta_{k}|<\log 4.
\]
So it follows from \eqref{def:K} that $|K(\eta(N))|\leq 5 \log 4$.

By \eqref{boundforthetak}, we have
\[
\sum_{k=1}^{p}(k-1)\theta_{k}\leq \sum_{k=1}^{p}\frac{k-1}{2^{k-1}}<2,
\]
which together with \eqref{knownbound} implies the boundedness of the sequence $R(\eta(N))$.
\end{proof}

\begin{proposition}\label{propboundedseq2}
The sequence $\{H(\eta(N);s)\}_{N\geq 1}$ is unbounded for each $s\leq -1$. In fact, the sequences $\{H(\eta(N);-1)/\log(N+1)\}$ and $\{H(\eta(N);s)/N^{-(s+1)}\}$, $s<-1$, are bounded and divergent.
\end{proposition}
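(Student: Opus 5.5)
The plan is to write $\theta_k=2^{n_k}/N$ and $S_k=\sum_{j=k+1}^p\theta_j$, so that, as in the proof of Proposition~\ref{propboundedseq},
\[
H(\eta(N);s)=\sum_{k=1}^{p}\theta_k^{s}\bigl(\theta_k+c_sS_k\bigr),\qquad c_s:=2(2^s-1).
\]
For $s\le -1$ we have $c_s\in[-2,-1]$. I will prove two things: an upper bound giving boundedness of the normalized sequences, and two explicit subsequences with different limits, which gives divergence. Unboundedness of $H(\eta(N);s)$ itself then follows, because one of these subsequences has normalized limit $>0$ while the normalizing factor $\log(N+1)$ or $N^{-(s+1)}$ tends to $\infty$.

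\emph{Boundedness.} By \eqref{ineqtheta} we have $0\le S_k\le\theta_k-1/N<\theta_k$. Hence the bracket satisfies
\[
\theta_k+c_sS_k\le\theta_k,\qquad \theta_k+c_sS_k\ge(1+c_s)\theta_k\ge-\theta_k,
\]
where the last step uses $1+c_s=2^{s+1}-1\in(-1,0]$. Therefore $|\theta_k+c_sS_k|\le\theta_k$ and
\[
|H(\eta(N);s)|\le\sum_{k=1}^p\theta_k^{s+1}.
\]
\begin{itemize}
\item For $s=-1$ the right-hand side equals $p=\tau_b(N)\le\log_2N+1$, so $H(\eta(N);-1)/\log(N+1)$ is bounded.
\item For $s<-1$ write $\theta_k^{s+1}=N^{-(s+1)}2^{n_k(s+1)}$. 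Since the $n_k$ are distinct nonnegative integers and $s+1<0$,
\[
\sum_{k=1}^p 2^{n_k(s+1)}\le\sum_{m\ge0}2^{m(s+1)}=\frac{1}{1-2^{s+1}}.
\]
So $H/N^{-(s+1)}$ is bounded.
\end{itemize}

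\emph{Divergence.} Along $N=2^n$ we have $H=1$, so both normalized sequences tend to $0$. It remains to find a subsequence with a positive limit.
\begin{itemize}
\item For $s<-1$, take $N=2^n+1$, so $p=2$, $\theta_1=2^n/N\to1$ and $\theta_2=1/N$.
 \begin{itemize}
 \item The $k=1$ term is $\theta_1^{s}(\theta_1+c_s/N)$, which is bounded.
 \item The $k=2$ term is $\theta_2^{s+1}=N^{-(s+1)}$.
 \end{itemize}
 Hence $H/N^{-(s+1)}\to1$.
\item For $s=-1$ we have $c_s=-1$, and each term equals $1-S_k/\theta_k$. Take $N=\sum_{i=0}^{m-1}4^i$, whose binary digits alternate $1010\cdots1$.
 \begin{itemize}
 \item Then $S_k/\theta_k=\sum_{i\ge1,\,i\le m-k}4^{-i}\le 1/3$, so every term is at least $2/3$.
 \item Since $p=m\sim\tfrac12\log_2N$, this gives $H\ge\tfrac23 m$.
 \end{itemize}
 Thus $\liminf H/\log(N+1)\ge\frac{1}{3\log2}>0$ along this subsequence, while the limit along powers of $2$ is $0$.
\end{itemize}

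The step needing the most care is the two-sided bound on the bracket $\theta_k+c_sS_k$, because for $s\le-1$ its sign is not controlled. The key observation is that $|1+c_s|\le1$ together with $S_k<\theta_k$ from \eqref{ineqtheta} forces $|\theta_k+c_sS_k|\le\theta_k$. Everything else is explicit computation on the chosen subsequences.
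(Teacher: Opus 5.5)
Your proof is correct and follows the same strategy as the paper. You bound $|H(\eta(N);s)|$ by $\sum_k\theta_k^{s+1}$ using \eqref{ineqtheta}, which is $p=\tau_b(N)$ when $s=-1$ and a geometric series when $s<-1$, and you show divergence by comparing the subsequence $N=2^n$ with the subsequence $N=(4^m-1)/3$ when $s=-1$. The differences are only refinements: your bound $p\le\log_2N+1$ replaces the paper's citation of $\tau_b(N)\le\log(N+1)/\log 2$, your estimate $|\theta_k+c_sS_k|\le\theta_k$ is sharper than the paper's constant $2^{s+1}+3$, and for $s<-1$ your choice $N=2^n+1$ gives the limit $1$ immediately, whereas the paper reuses $N=(4^p-1)/3$ and computes the limit exactly.
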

\begin{proof}
Suppose that $N$ has the binary decomposition \eqref{bindecomp}, and let 
\[
\theta_{k}=\frac{2^{n_{k}}}{N},\qquad 1\leq k\leq p.
\]
From \eqref{def:H} and \eqref{ineqtheta} we obtain that
\[
0<H(\eta(N);-1)=\sum_{k=1}^{p}\theta_{k}^{-1}\left(\theta_{k}-\sum_{j=k+1}^{p}\theta_{j}\right)\leq \sum_{k=1}^{p}\theta_{k}^{-1}\theta_{k}=p=\tau_{b}(N),
\]
see \eqref{def:arithfunc}. We also know by Theorem 1.1 in \cite{LopMc2} (see also formula (2.1) in \cite{LopMc2}) that
\[
\tau_{b}(N)\leq \frac{\log(N+1)}{\log 2},
\]
hence for all $N\in\mathbb{N}$ we have
\begin{equation}\label{ineqHminusone}
0<\frac{H(\eta(N);-1)}{\log(N+1)}\leq \frac{1}{\log 2}.
\end{equation}

Assume now that $s<-1$. Then by \eqref{def:H} and \eqref{ineqtheta} we obtain
\begin{align*}
|H(\eta(N);s)| & \leq \sum_{k=1}^{p}\theta_{k}^{s}\left(2(2^{s}+1)\left(\sum_{j=k+1}^{p}\theta_{j}\right)+\theta_{k}\right)\\
& \leq \sum_{k=1}^{p}\theta_{k}^{s}\,(2(2^{s}+1)\theta_{k}+\theta_{k})\\
&=(2^{s+1}+3)\sum_{k=1}^{p}\theta_{k}^{s+1}.
\end{align*}
We also have
\[
\sum_{k=1}^{p}\theta_{k}^{s+1}=N^{-(s+1)}\sum_{k=1}^{p}2^{n_{k}(s+1)}\leq N^{-(s+1)}\sum_{m=0}^{\infty}2^{(s+1)m}=\frac{N^{-(s+1)}}{1-2^{s+1}}.
\]
We conclude that for $s<-1$,
\[
|H(\eta(N);s)|\leq N^{-(s+1)}\frac{2^{s+1}+3}{1-2^{s+1}},
\]
so the sequence $H(\eta(N);s)/N^{-(s+1)}$ is bounded.

If we take
\begin{equation}\label{seqNp}
N=N(p)=\frac{4^{p}-1}{3}=\sum_{k=0}^{p-1}2^{2k},
\end{equation}
then 
\[
\sum_{j=k+1}^{p}\theta_{j}=\frac{1}{N}\frac{4^{p-k}-1}{3}
\]
and a simple calculation gives 
\[
H(\eta(N(p));-1)=\frac{2p}{3}+\frac{4}{9}(1-4^{-p}).
\]
Therefore
\[
\lim_{p\rightarrow\infty}\frac{H(\eta(N(p));-1)}{\log(N(p)+1)}=\frac{1}{3\log 2}.
\]
On the other hand, clearly
\[
\lim_{r\rightarrow\infty}\frac{H(\eta(2^{r});-1)}{\log(2^{r}+1)}=0.
\]
Hence the sequence $H(\eta(N);-1)/\log(N+1)$ is divergent.

For the sequence in \eqref{seqNp}, a careful calculation shows that for $s<-1$,
\[
\frac{H(\eta(N(p));s)}{N^{-(s+1)}}=\frac{1}{3}(2^{s+1}+1)\sum_{m=0}^{p-1}4^{(s+1)m}-\frac{2}{3}(2^{s}-1)\sum_{m=0}^{p-1}4^{sm}.
\]
Therefore
\[
\lim_{p\rightarrow\infty}\frac{H(\eta(N(p));s)}{N^{-(s+1)}}=\frac{2^{s+1}+1}{3(1-4^{s+1})}-\frac{2(2^{s}-1)}{3(1-4^{s})}=\frac{1+4^{s}(2^{s+1}-3)}{(1-4^{s+1})(1-4^{s})}>0.
\]
On the other hand,
\[
\lim_{r\rightarrow\infty}\frac{H(\eta(2^{r});s)}{2^{-r(s+1)}}=0,
\]
and so the sequence $H(\eta(N);s)/N^{-(s+1)}$ is divergent.
\end{proof}

\begin{theorem}\label{firstthm-expansion}
For every $s\geq -1$ such that $s\neq 0$ and $s\notin\mathbb{N}_{\mathrm{odd}}$, we have
\begin{equation}\label{energyexp}
E_{s}(\alpha_{N,s})=v(s) N^{2}+\frac{2}{(2\pi)^{s}}\sum_{j=0}^{\lfloor(s+1)/2\rfloor}\beta_{j}(s)\,\zeta(s-2j)\,H(\eta(N);s-2j)\,N^{s-2j+1}+\rho_{N,s},
\end{equation}
where the sequence $(\rho_{N,s})_{N=2}^{\infty}$ is uniformly bounded in $N$. In the special case that $s$ is a positive even integer, $v(s)=0$ and $\rho_{N,s}=0$ for all $N\geq 2$.
\end{theorem}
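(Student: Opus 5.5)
The plan is to substitute the expansion \eqref{expLsN} (or the exact form \eqref{exactformLsN} when $s$ is a positive even integer) into the binary formula \eqref{energybin}, term by term. Write $c_k:=\sum_{t=k+1}^{p}2^{n_t-n_k}$, so that \eqref{energybin} reads
\[
E_s(\alpha_{N,s})=\sum_{k=1}^{p}\Big(c_k\,\mathcal{L}_s(2^{n_k+1})+(1-2c_k)\,\mathcal{L}_s(2^{n_k})\Big),
\]
with $c_p=0$. Fix $J=\lfloor (s+1)/2\rfloor$, so that the remainder in \eqref{expLsN} is $O(M^{s-2J-1})$ with $s-2J-1\le 0$. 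Replacing each $\mathcal{L}_s(M)$ by $v(s)M^2+\frac{2}{(2\pi)^s}\sum_{j=0}^{J}\beta_j(s)\zeta(s-2j)M^{s-2j+1}+r_s(M)$ turns $E_s(\alpha_{N,s})$ into three groups: the quadratic group, the main power groups indexed by $j$, and the remainder group.

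\emph{Quadratic group.} Since $c_k\,4^{n_k+1}+(1-2c_k)4^{n_k}=4^{n_k}(1+2c_k)$, we have $\sum_k 4^{n_k}+2\sum_k\sum_{t>k}2^{n_t+n_k}=(\sum_k 2^{n_k})^2=N^2$. This gives exactly $v(s)N^2$.

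\emph{Power group for a given $j$.} Set $\sigma=s-2j$. The coefficient of $\frac{2}{(2\pi)^s}\beta_j(s)\zeta(\sigma)$ is
\[
\sum_k 2^{n_k(\sigma+1)}\big(c_k(2^{\sigma+1}-2)+1\big)=\sum_k 2^{n_k(\sigma+1)}+2(2^{\sigma}-1)\sum_k 2^{n_k\sigma}\sum_{t>k}2^{n_t}.
\]
Dividing by $N^{\sigma+1}$ and writing $\theta_k=2^{n_k}/N$ gives precisely $H(\eta(N);\sigma)$ from \eqref{def:H}. So this group equals $\frac{2}{(2\pi)^s}\beta_j(s)\zeta(s-2j)H(\eta(N);s-2j)N^{s-2j+1}$, which is the sum in \eqref{energyexp}. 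This step is a routine algebraic identity.

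\emph{Remainder group and the main obstacle.} We must show that
\[
\rho_{N,s}=\sum_k\big(c_k\,r_s(2^{n_k+1})+(1-2c_k)\,r_s(2^{n_k})\big)
\]
is bounded uniformly in $N$. The key points are the following.
\begin{itemize}
\item By \eqref{ineqtheta}, $0\le c_k<1$, so the weights satisfy $|c_k|+|1-2c_k|\le 3$.
\item The bound $|r_s(M)|\le C M^{s-2J-1}$ should hold for all $M\ge 1$, not only asymptotically. Since $r_s(M)$ is finite for each $M$, the constant can be enlarged to cover small $M$, including $M=1$ where $\mathcal{L}_s(1)=0$.
\item When $s-2J-1<0$, the sum $\sum_k 2^{n_k(s-2J-1)}$ is dominated by the geometric series $\sum_{m\ge0}2^{m(s-2J-1)}$, which gives a uniform bound.
\end{itemize}

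The delicate case is $s-2J-1=0$, i.e. $s$ an odd integer; this is excluded by hypothesis, though $s=-1$ needs a check. For $s=-1$ we have $J=0$ and the remainder is $O(N^{-2})$, since \eqref{expLsN} with $J=0$ gives $O(N^{s-1})$. More precisely, the exponent is $s-2J-1$, which is negative whenever $s\notin\mathbb{N}_{\mathrm{odd}}$. Indeed, for $s\ge-1$ we have $J>(s-1)/2$ unless $s$ is an odd positive integer; for $s=-1$, $J=0>-1$. So the geometric bound always applies, and the terms with $n_k=0$ are handled by the enlarged constant.

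\emph{Even case.} If $s$ is a positive even integer, \eqref{exactformLsN} has no remainder and $v(s)=0$, so $\rho_{N,s}=0$ and the identity is exact. Here $\lfloor (s+1)/2\rfloor=s/2$ matches the range of $j$ in \eqref{exactformLsN}.
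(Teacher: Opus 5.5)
Your proposal is correct and follows the paper's proof: you substitute \eqref{expLsN} into \eqref{energybin} with $J=\lfloor(s+1)/2\rfloor$, recover $v(s)N^2$ through the identity \eqref{expNsquared} and the $H$-terms through \eqref{expansionHtheta}, bound the remainder by the geometric sum $\sum_k 2^{n_k(s-2J-1)}$, and handle the even case with \eqref{exactformLsN}. One correction to your exponent discussion: with $J=\lfloor(s+1)/2\rfloor$ you always have $s-2J-1<0$ (it equals $-2$ for odd $s$), so odd $s$ is excluded because \eqref{expLsN} is unavailable there ($v(s)$ and $\zeta(1)$ have poles), not because the exponent can vanish.
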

\begin{proof}
By \eqref{expLsN} we can write
\begin{align}
\mathcal{L}_{s}(2^{n_{k}}) & =v(s) 2^{2n_{k}}+\frac{2}{(2\pi)^{s}}\sum_{j=0}^{J}\beta_{j}(s)\zeta(s-2j) 2^{n_{k}(s-2j+1)}+O(2^{n_{k}(s-2J-1)}),\label{est1}\\
\mathcal{L}_{s}(2^{n_{k}+1}) & =v(s) 2^{2n_{k}+2}+\frac{2}{(2\pi)^{s}}\sum_{j=0}^{J}\beta_{j}(s)\zeta(s-2j) 2^{(n_{k}+1)(s-2j+1)}+O(2^{(n_{k}+1)(s-2J-1)}).\label{est2}
\end{align}
Observe also that if $N\in\mathbb{N}$ has the binary decomposition \eqref{bindecomp}, then
\begin{equation}\label{expNsquared}
N^{2}=\sum_{k=1}^{p-1}\left(\sum_{j=k+1}^{p} 2^{n_{j}-n_{k}}\right)2^{2(n_{k}+1)}+\sum_{k=1}^{p}\left(1-\sum_{j=k+1}^{p} 2^{n_{j}-n_{k}+1}\right) 2^{2n_{k}}.
\end{equation}
So applying \eqref{energybin}, \eqref{est1}, \eqref{est2}, and \eqref{expNsquared} we obtain
\begin{align*}
E_{s}(\alpha_{N,s})={} &\sum_{k=1}^{p-1}\left(\sum_{t=k+1}^{p} 2^{n_{t}-n_{k}}\right)\left(v(s) 2^{2(n_{k}+1)}+\frac{2}{(2\pi)^{s}}\sum_{j=0}^{J}\beta_{j}(s)\zeta(s-2j) 2^{(n_{k}+1)(s-2j+1)}\right)\\
&+\sum_{k=1}^{p}\left(1-\sum_{t=k+1}^{p} 2^{n_{t}-n_{k}+1}\right)\left(v(s) 2^{2 n_{k}}+\frac{2}{(2\pi)^{s}}\sum_{j=0}^{J}\beta_{j}(s)\zeta(s-2j) 2^{n_{k}(s-2j+1)}\right)\\
&+O\left(\sum_{k=1}^{p}2^{n_{k}(s-2J-1)}\right)\\
={} &v(s) N^{2}+\frac{2}{(2\pi)^{s}}\sum_{j=0}^{J}\beta_{j}(s)\zeta(s-2j)\sum_{k=1}^{p-1}\left(\sum_{t=k+1}^{p}2^{n_{t}-n_{k}}\right)2^{(n_{k}+1)(s-2j+1)}\\
&+\frac{2}{(2\pi)^{s}}\sum_{j=0}^{J}\beta_{j}(s)\zeta(s-2j)\sum_{k=1}^{p}\left(1-\sum_{t=k+1}^{p}2^{n_{t}-n_{k}+1}\right)2^{n_{k}(s-2j+1)}\\& +O\left(\sum_{k=1}^{p}2^{n_{k}(s-2J-1)}\right),
\end{align*}
where for the big-$O$ term we have used the simple estimates
\begin{equation}\label{bounds}
\sum_{t=k+1}^{p} 2^{n_{t}-n_{k}}\leq 1,\qquad |1-\sum_{t=k+1}^{p} 2^{n_{t}-n_{k}+1}|\leq 3.
\end{equation}
If we define
\[
\theta_{k}=\frac{2^{n_{k}}}{N},\qquad 1\leq k\leq p,
\]
then simple algebraic manipulations lead to the relation
\begin{align}
&\sum_{k=1}^{p-1}\left(\sum_{t=k+1}^{p} 2^{n_{t}-n_{k}}\right)2^{(n_{k}+1)(s-2j+1)}+\sum_{k=1}^{p}\left(1-\sum_{t=k+1}^{p}2^{n_{t}-n_{k}+1}\right)2^{n_{k}(s-2j+1)}\notag\\
&=\left(\sum_{k=1}^{p-1}\left(\sum_{t=k+1}^{p}\frac{2^{n_{t}+1}}{N}\right)\left(\frac{2^{n_{k}+1}}{N}\right)^{s-2j}+\sum_{k=1}^{p}\left(\frac{2^{n_{k}}}{N}-\sum_{t=k+1}^{p}\frac{2^{n_{t}+1}}{N}\right)\left(\frac{2^{n_{k}}}{N}\right)^{s-2j}\right)N^{s-2j+1}\notag\\
&=\left(\sum_{k=1}^{p-1}\left(\sum_{t=k+1}^{p} 2\theta_{t}\right)(2\theta_{k})^{s-2j}+\sum_{k=1}^{p}\left(\theta_{k}-\sum_{t=k+1}^{p} 2\theta_{t}\right)\theta_{k}^{s-2j}\right)N^{s-2j+1}\notag\\
&=H(\eta(N);s-2j) N^{s-2j+1}.\label{expansionHtheta}
\end{align}
So from the above calculations we conclude that
\[
E_{s}(\alpha_{N,s})=v(s)N^{2}+\frac{2}{(2\pi)^{s}}\sum_{j=0}^{J}\beta_{j}(s)\zeta(s-2j) H(\eta(N);s-2j) N^{s-2j+1}+O\left(\sum_{k=1}^{p} 2^{n_{k}(s-2J-1)}\right).
\]
If we take now $J=\lfloor\frac{s+1}{2}\rfloor$, then $s-2J-1<0$, and it follows that the sequence 
\[
N\mapsto \sum_{k=1}^{p} 2^{n_{k}(s-2J-1)}
\]
is bounded. This justifies \eqref{energyexp}.

In the case that $s$ is a positive even integer, taking $J=s/2$ the claim follows from \eqref{exactformLsN}. 
\end{proof}

\begin{remark}
In the case $s=-1$, since $\zeta(-1)=-1/12$ and $v(-1)=4/\pi$, we obtain from \eqref{energyexp} that
\begin{equation}\label{expan2}
E_{-1}(\alpha_{N,-1})=\frac{4}{\pi}N^{2}-\frac{\pi}{3} H(\eta(N);-1)+O(1).
\end{equation}
\end{remark}

To formulate the asymptotic expansion corresponding to $s\in \No$, we need some additional notation. For every integer $M\geq 0$, let us define the constant 
\begin{equation}\label{def:CM}
	C_M:=\frac{\beta'_M(2M+1)}{\beta_M(2M+1)}+\frac{1}{2}\psi(M+1)-\frac{1}{2}\psi(M+1/2),
\end{equation}
where $\psi=\Gamma'/\Gamma$ is the Digamma function. In this formula, $\beta_{M}'(2M+1)$ indicates the derivative of $\beta_{M}(s)$ at $s=2M+1$, and according to Theorem 1.3 in \cite{BHS}, we have
\[
\beta_{M}'(2M+1)=\sum_{k=0}^{M-1}\beta_{k}(2M+1)\frac{\zeta(2(M-k))}{M-k}.
\] 
In particular, since $\beta_0(s)=1$, $\psi(1)=-\gamma$, and $\psi(1/2)=-\gamma-2\log 2$, we have
\[
C_0=\log 2.
\]

\begin{theorem}\label{secondthm-expansion}
If $s\in\No$, with $s=2M+1$,
\begin{align}\label{expansionoddcase}
		\begin{split}
			E_{s}(\alpha_{N,s})= {} &
			\frac{(\frac{1}{2})_M}{\pi\,2^{2M}M!}N^2\log N+	\frac{(\frac{1}{2})_M}{\pi\,2^{2M}M!}(\gamma-\log(\pi) +C_M+K(\eta(N)))N^2
			\\
			&+\frac{2}{(2\pi)^s}\sum_{j=0}^{M-1}\beta_j(s)\,\zeta(s-2j)\,H(\eta(N);s-2j)\,N^{s-2j+1}+\rho_{N,s},
		\end{split}
	\end{align}
	where the sequence $(\rho_{N,s})_{N=2}^\infty$ is uniformly bounded in $N$. 
\end{theorem}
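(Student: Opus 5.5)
The plan is to argue as in Theorem \ref{firstthm-expansion}: insert an asymptotic expansion of $\mathcal{L}_s(2^{m})$ into the representation \eqref{energybin} and collect terms. The difference is that for $s=2M+1$ the expansion \eqref{expLsN} fails, since $v(s)$ has a pole and $\zeta(s-2j)$ has a pole at $j=M$. So the first step is to invoke the odd-integer version of the expansion from \cite{BHS}:
\[
\mathcal{L}_s(N)=\frac{(\frac12)_M}{\pi\,2^{2M}M!}N^2\log N+\frac{(\frac12)_M}{\pi\,2^{2M}M!}\bigl(\gamma-\log\pi+C_M\bigr)N^2+\frac{2}{(2\pi)^s}\sum_{j=0}^{M-1}\beta_j(s)\zeta(s-2j)N^{s-2j+1}+\frac{2}{(2\pi)^s}\sum_{j=M+1}^{J}\beta_j(s)\zeta(s-2j)N^{s-2j+1}+O(N^{s-2J-1}).
\]
The constant $C_M$ in \eqref{def:CM} is designed precisely to match the constant appearing there. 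If this form is not quoted verbatim in \cite{BHS}, I would derive it by letting $s\to 2M+1$ in \eqref{expLsN}. In that limit the simple pole of $v(s)N^2$ cancels against the $j=M$ term $\beta_M(s)\zeta(s-2M)N^{s-2M+1}$. The $N^2\log N$ term comes from differentiating $N^{s-2M+1}$, and the constant comes from the Laurent data $\zeta(1+\epsilon)=1/\epsilon+\gamma+\dots$, from $\beta_M'(2M+1)$, and from the digamma terms in the expansion of $v(s)$.

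Next I would choose $J=M$, so that the remainder is $O(2^{n_k(s-2M-1)})=O(1)$ per term. The sum over $k$ of these $O(1)$ terms is multiplied by coefficients bounded by \eqref{bounds}. Unlike the case of Theorem \ref{firstthm-expansion}, $\sum_k 2^{0}=p$ is not bounded. So I would instead take $J=M+1$: the $j=M+1$ term is $O(2^{-n_k})$ and the remainder is $O(2^{-3n_k})$, and both are summably bounded. This makes $\rho_{N,s}$ bounded.

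The remaining steps are algebraic bookkeeping, applied to each kind of term:
\begin{itemize}
\item The terms $N^{s-2j+1}$ with $j<M$ produce $H(\eta(N);s-2j)N^{s-2j+1}$ by exactly the identity \eqref{expansionHtheta}.
\item The $N^2$ terms with the constant produce $N^2$ times that constant by \eqref{expNsquared}.
\item The $N^2\log N$ terms need a computation. For $m=n_k$ or $n_k+1$, write $2^{2m}m\log 2=2^{2m}\log N+2^{2m}\log(2^m/N)$. The $\log N$ part again sums to $N^2\log N$ by \eqref{expNsquared}. The rest equals $N^2$ times
\[
\sum_{k<p}\Bigl(\sum_{t>k}2\theta_t\Bigr)(2\theta_k)\log(2\theta_k)+\sum_k\Bigl(\theta_k-\sum_{t>k}2\theta_t\Bigr)\theta_k\log\theta_k .
\]
\end{itemize}

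The main obstacle is showing that this last expression equals $K(\vec\theta)$ from \eqref{def:K}. Expanding $\log(2\theta_k)=\log 2+\log\theta_k$ gives
\[
\sum_k\theta_k^2\log\theta_k+2\sum_{k}\Bigl(\sum_{t>k}\theta_t\Bigr)\theta_k\log\theta_k+4\log2\sum_{k}\theta_k\sum_{t>k}\theta_t .
\]
Using $\sum\theta_k=1$, we have $2\sum_{k<t}\theta_k\theta_t=1-\sum\theta_k^2$. Hence the last term is $2\log2-2\log2\sum\theta_k^2$, which combines with $\sum_k\theta_k^2\log\theta_k$ to give $2\log2+\sum_k\theta_k^2\log(\theta_k/4)$. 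This is exactly $K$, and substituting it yields \eqref{expansionoddcase}.
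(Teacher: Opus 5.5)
Your reduction is the same as the paper's. That covers the odd-$s$ expansion \cite[Thm.~1.3]{BHS}, the weights of \eqref{energybin}, the identities \eqref{expNsquared} and \eqref{expansionHtheta}, the split $2^{2m}m\log 2=2^{2m}\log N+2^{2m}\log(2^m/N)$, and your verification of $K$ via $2\sum_{k<t}\theta_k\theta_t=1-\sum_k\theta_k^2$. All of this is correct. The gap is in the remainder.

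With $J=M+1$, the $j=M+1$ term in the expansion of $\mathcal{L}_s(2^{m})$ is $\frac{2}{(2\pi)^s}\beta_{M+1}(s)\zeta(-1)\,2^{m(s-2M-1)}$. Since $s-2M-1=0$, this is a constant, not $O(2^{-n_k})$. (The new remainder is $O(2^{-2n_k})$, not $O(2^{-3n_k})$; your exponents are off by one.) So the unboundedness you correctly detected with $J=M$ is not removed by passing to $J=M+1$; it is only made explicit. Applying \eqref{expansionHtheta} with exponent $s-2j+1=0$, this constant summed against the weights of \eqref{energybin} gives
\[
\frac{2}{(2\pi)^s}\beta_{M+1}(s)\zeta(-1)\sum_{k=1}^{p}\Bigl(1-\sum_{t=k+1}^{p}2^{n_t-n_k}\Bigr)=\frac{2}{(2\pi)^s}\beta_{M+1}(s)\zeta(-1)\,H(\eta(N);-1).
\]

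This cannot be repaired, because the coefficient is nonzero. We have $\zeta(-1)=-1/12$. Also $\beta_{M+1}(2M+1)>0$, because $\mathrm{sinc}^{-s}z=\exp\bigl(s\sum_{k\geq 1}\zeta(2k)z^{2k}/k\bigr)$ has positive Maclaurin coefficients for $s>0$; for $s=1$ the constant is $-\pi/72$.

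By Proposition~\ref{propboundedseq2}, $H(\eta(N);-1)$ is unbounded; it equals $\frac{2p}{3}+\frac{4}{9}(1-4^{-p})$ at $N=(4^p-1)/3$. Hence
\[
\rho_{N,s}=\frac{2}{(2\pi)^s}\beta_{M+1}(s)\zeta(-1)H(\eta(N);-1)+O(1),
\]
which is not uniformly bounded. What your argument actually proves is $\rho_{N,s}=O(\log N)$, by \eqref{ineqHminusone}.

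The paper's own proof contains the same slip. It takes $J=M+1$ and declares the $j=M+1$ term bounded ``since $s-2J+1=0$'', overlooking that its factor $H(\eta(N);s-2J)=H(\eta(N);-1)$ is unbounded. So the statement as written is false. It should read $\rho_{N,s}=O(\log N)$, or carry the $H(\eta(N);-1)$ term explicitly. Correspondingly, the error term in \eqref{expTN1} should be $O(N^{-2}\log N)$.
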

\begin{proof}
Suppose that $s=2M+1$ is a positive odd integer. In this proof we use the notation
\[
q_{M}:=\frac{(\frac{1}{2})_M}{\pi\,2^{2M}M!}.
\]
By \cite[Thm. 1.3]{BHS} we know that for any $J>M$ the following expansion holds as $N\rightarrow\infty$:
\begin{align*}
\mathcal{L}_{s}(N)={} &q_{M} N^{2}\log N+q_{M}(\gamma-\log(\pi)+C_{M})N^{2}\\
&+\frac{2}{(2\pi)^{s}}\underset{j\not=M}{\sum_{j=0}^{J}}\beta_{j}(s)\zeta(s-2j) N^{s-2j+1}+O(N^{s-2J-1}).
\end{align*}
So we have
\begin{align*}
\mathcal{L}_{s}(2^{n_{k}})={} &q_{M}\,2^{2n_{k}}\log(2^{n_{k}})+q_{M}(\gamma-\log(\pi)+C_{M})\,2^{2n_{k}}\\
&+\frac{2}{(2\pi)^{s}}\underset{j\not=M}{\sum_{j=0}^{J}}\beta_{j}(s)\zeta(s-2j) 2^{n_{k}(s-2j+1)}+O(2^{n_{k}(s-2J-1)})
\end{align*}
and
\begin{align*}
\mathcal{L}_{s}(2^{n_{k}+1})={} &q_{M}\,2^{2n_{k}+2}\log(2^{n_{k}+1})+q_{M}(\gamma-\log(\pi)+C_{M})\,2^{2n_{k}+2}\\
&+\frac{2}{(2\pi)^{s}}\underset{j\not=M}{\sum_{j=0}^{J}}\beta_{j}(s)\zeta(s-2j) 2^{(n_{k}+1)(s-2j+1)}+O(2^{(n_{k}+1)(s-2J-1)}).
\end{align*}
It easily follows from \eqref{energybin}, \eqref{expNsquared}, \eqref{bounds}, \eqref{expansionHtheta}, and the two previous identities that
\begin{align}
E_{s}(\alpha_{N,s})={} &q_{M}(\gamma-\log(\pi)+C_{M})N^{2}+q_{M} S_{N}\notag\\
&+\frac{2}{(2\pi)^s}\underset{j\not=M}{\sum_{j=0}^{J}}\beta_{j}(s)\zeta(s-2j)H(\eta(N);s-2j)N^{s-2j+1}+O\left(\sum_{k=1}^{p} 2^{n_{k}(s-2J-1)}\right),\label{expenergia}
\end{align}
where
\[
S_{N}:=\sum_{k=1}^{p-1}\left(\sum_{t=k+1}^{p}2^{n_{t}-n_{k}}\right)2^{2n_{k}+2}\log(2^{n_{k}+1})+\sum_{k=1}^{p}\left(1-\sum_{t=k+1}^{p} 2^{n_{t}-n_{k}+1}\right)2^{2n_{k}}\log(2^{n_{k}}).
\]
Using \eqref{expNsquared} we can write
\begin{align*}
N^{2}\log N={} &\sum_{k=1}^{p-1}\left(\sum_{t=k+1}^{p}2^{n_{t}-n_{k}}\right)\left(2^{2n_{k}+2}\log(2^{n_{k}+1})+2^{2n_{k}+2}\log\left(\frac{N}{2^{n_{k}+1}}\right)\right)\\
&+\sum_{k=1}^{p}\left(1-\sum_{t=k+1}^{p}2^{n_{t}-n_{k}+1}\right)\left(2^{2 n_{k}}\log(2^{n_{k}})+2^{2 n_{k}}\log\left(\frac{N}{2^{n_{k}}}\right)\right),
\end{align*}
whence it follows that
\begin{align*}
S_{N}={} &N^{2}\log N+\sum_{k=1}^{p-1}\left(\sum_{t=k+1}^{p}2^{n_{t}-n_{k}}\right)2^{2n_{k}+2}\log\left(\frac{2^{n_{k}+1}}{N}\right)\\
&+\sum_{k=1}^{p}\left(1-\sum_{t=k+1}^{p}2^{n_{t}-n_{k}+1}\right)2^{2 n_{k}}\log\left(\frac{2^{n_{k}}}{N}\right)\\
={} &N^{2}\log N+N^{2}\left(4\sum_{k=1}^{p-1}\left(\sum_{t=k+1}^{p}\theta_{t}\,\theta_{k}\right)\log(2\theta_{k})+\sum_{k=1}^{p}\left(\theta_{k}^{2}-\sum_{t=k+1}^{p}2\theta_{t}\,\theta_{k}\right)\log(\theta_{k})\right),
\end{align*}
where $\theta_{k}=2^{n_{k}}/N$, $1\leq k\leq p$. 

Applying the identity
\[
1=(\theta_{1}+\cdots+\theta_{p})^2=\sum_{k=1}^{p}\theta_{k}^2+2\sum_{k=1}^{p-1}\sum_{t=k+1}^{p}\theta_{t}\,\theta_{k},
\]
it is easy to check that
\[
K(\eta(N))=4\sum_{k=1}^{p-1}\left(\sum_{t=k+1}^{p}\theta_{t}\,\theta_{k}\right)\log(2\theta_{k})+\sum_{k=1}^{p}\left(\theta_{k}^{2}-\sum_{t=k+1}^{p}2\theta_{t}\,\theta_{k}\right)\log(\theta_{k}).
\]
Therefore
\[
S_{N}=N^{2}\log N+N^{2} K(\eta(N)).
\]
From this identity and \eqref{expenergia} we conclude that
\begin{align}
E_{s}(\alpha_{N,s})={}& q_{M} N^{2}\log N+q_{M}(\gamma-\log(\pi)+C_{M}+K(\eta(N)))N^{2}\notag\\
& +\frac{2}{(2\pi)^s}\underset{j\not=M}{\sum_{j=0}^{J}}\beta_{j}(s)\zeta(s-2j)H(\eta(N);s-2j)N^{s-2j+1}+O\left(\sum_{k=1}^{p} 2^{n_{k}(s-2J-1)}\right).\label{expenergia2}
\end{align} 
Finally we take $J=M+1$ in \eqref{expenergia2}. Since $s-2J+1=0$, the sequence 
\[
\frac{2}{(2\pi)^{s}}\beta_{M+1}(s)\zeta(s-2J) H(\eta(N);s-2J) N^{s-2J+1}=O(1)
\]
is bounded. Also, with $J=M+1$ we have $s-2J-1=-2$, so the sequence
\[
N\mapsto\sum_{k=1}^{p}2^{n_{k}(s-2J-1)}
\]
is also bounded, and \eqref{expansionoddcase} follows.
\end{proof}

\noindent\textbf{Proof of Theorem \ref{theo:leadasympTNs}:} Recall that $\beta_{0}(s)=1$. The identity \eqref{expTN-1} follows from \eqref{expan2}. In the range $-1<s<1$, $s\neq 0$, according to \eqref{energyexp} we have
\[
E_{s}(\alpha_{N,s})=v(s) N^{2}+\frac{2\zeta(s)}{(2\pi)^{s}}H(\eta(N);s)N^{1+s}+O(1),
\]
and so
\[
\frac{E_{s}(\alpha_{N,s})-v(s) N^{2}}{N^{1+s}}=\frac{2\zeta(s)}{(2\pi)^{s}}H(\eta(N);s)+O(N^{-1-s}).
\]
If $1<s<3$ and $s\neq 2$, then $\lfloor\frac{s+1}{2}\rfloor=1$ and $0<s-1<2$, and from \eqref{energyexp} we obtain
\begin{equation}\label{expansion}
E_{s}(\alpha_{N,s})=v(s) N^{2}+\frac{2\zeta(s)}{(2\pi)^{s}}H(\eta(N);s)N^{1+s}+\frac{2\zeta(s-2)}{(2\pi)^{s}}\beta_{1}(s)H(\eta(N);s-2)N^{s-1}+O(1),
\end{equation}
which, taking into account that the sequence $H(\eta(N);s-2)$ is bounded, gives
\[
\frac{E_{s}(\alpha_{N,s})}{N^{s+1}}=\frac{2\zeta(s)}{(2\pi)^{s}}H(\eta(N);s)+O(N^{1-s}).
\]
If $s=2$, then \eqref{expansion} is valid with $v(s)=0$ and $O(1)=0$, hence in this case
\begin{equation}\label{expansion2}
\frac{E_{s}(\alpha_{N,s})}{N^{s+1}}=\frac{2\zeta(s)}{(2\pi)^{s}}H(\eta(N);s)+O(N^{-2}).
\end{equation}
If $s>3$, $s\notin\mathbb{N}_{\mathrm{odd}}$, then the expansion \eqref{expansion} is valid with $O(1)$ replaced by $O(N^{s-3})$ (here we use Proposition \ref{propboundedseq}), and since $s-1>2$, it follows that \eqref{expansion2} also holds in this case.

In the case $s=1$, by \eqref{expansionoddcase} we know that
\[
E_{1}(\alpha_{N,1})=\frac{1}{\pi} N^{2}\log N+\frac{1}{\pi}(\gamma+\log(2/\pi)+K(\eta(N)))N^{2}+O(1),
\]
and \eqref{expTN1} follows.

According to \eqref{expansionoddcase}, if $s=3$ we have
\[
E_{s}(\alpha_{N,s})=\frac{2\zeta(s)}{(2\pi)^{s}}H(\eta(N);s)N^{4}+\frac{1}{8\pi} N^{2}\log N+\frac{1}{8\pi}(\gamma-\log(\pi)+C_{1}+K(\eta(N)))N^{2}+O(1),
\]
from which the last case in \eqref{expTNs} follows.

If $s>3$, $s\in\mathbb{N}_{\mathrm{odd}}$, then by \eqref{expansionoddcase} we have
\[
E_{s}(\alpha_{N,s})=\frac{2\zeta(s)}{(2\pi)^{s}}H(\eta(N);s)N^{s+1}+O(N^{s-1}),
\]
so \eqref{expansion2} holds in this case.

In formula (46) in \cite{LopWag} it was proved that if $N$ has the binary decomposition \eqref{bindecomp}, then
\[
T_{N,0}=-\log(2)\sum_{k=1}^{p}(n_{k}+2(k-1))\frac{2^{n_{k}}}{N}+\log N.
\]
Writing $\log(2^{n_{k}})=n_{k}\log 2$ we obtain
\[
-\log(2)\sum_{k=1}^{p}n_{k}\frac{2^{n_{k}}}{N}+\log N=-\sum_{k=1}^{p}\frac{2^{n_{k}}}{N}\log\frac{2^{n_{k}}}{N},
\]
so if we use the notation $\eta(N)=(\theta_{1},\ldots,\theta_{p})=(2^{n_{1}}/N,\ldots,2^{n_{p}}/N)$, then
\[
T_{N,0}=-\sum_{k=1}^{p}\theta_{k}\log\theta_{k}-\sum_{k=1}^{p}\log(4)(k-1)\theta_{k}=R(\eta(N)).
\]
This concludes the proof.
\qed

\bigskip

\noindent\textbf{Proof of Corollary \ref{cor1}:}
If $s=-1$, then according to \eqref{expTN-1} and \eqref{doubperiod}, as $N\rightarrow\infty$ we have
\begin{align*}
T_{2N,-1}-T_{N,-1} & =-\frac{\pi}{3}\frac{H(\eta(2N);-1)}{\log(2N)}+\frac{\pi}{3}\frac{H(\eta(N);-1)}{\log N}+o(1)\\
& =\frac{\pi}{3}\frac{\log(2) H(\eta(N);-1)}{\log(N)\log(2N)}+o(1)
\end{align*}
and the claim follows now from Proposition \ref{propboundedseq2}. In the case $s>-1$, the result follows immediately from \eqref{doubperiod} and Theorem \ref{theo:leadasympTNs}.
\qed

\section{On the limit points of the sequences $H(\eta(N);s)$, $K(\eta(N))$, and $R(\eta(N))$}\label{limit-points-functions-H-K-R}
 	
Recall that $p=\tau_{b}(N)$ denotes the number of terms in the binary expansion of $N\in\mathbb{N}$. For each such $N$, we identify the vector
\[
\eta(N)=\l(\frac{2^{n_1}}{N},\ldots,\frac{2^{n_p}}{N}\r)
\] with the infinite vector
\[
\eta(N)=(\theta_{1},\theta_{2},\ldots,\theta_{n},\ldots)
\]
where $\theta_k=2^{n_k}/N$ for $1\leq k\leq p$, and $\theta_k=0$ for $k>p$.

Trivially, we have 
\[
\sum_{j=1}^{\infty}\theta_{j}=1.
\]

\begin{proposition}\label{propineqs}
Let $N\in\mathbb{N}$ and $\eta(N)=(\theta_{1},\theta_{2},\ldots,\theta_{n},\ldots)$. Then,
\begin{align*}
	1/2<\theta_{1}\leq 1, 
\end{align*}	
and  for each $ k\geq 1$, we have
\begin{align}
\theta_{k} & \leq \frac{1}{2^{k}-1},\label{firstineq}\\
\sum_{j=k+1}^{\infty}\theta_{j}& \leq \theta_{k},\label{thirdineq}\\
\sum_{j=1}^{k}\theta_{j} & \geq 1-\frac{1}{2^{k-1}}.\nonumber
\end{align}
\end{proposition}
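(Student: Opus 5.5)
The plan is to work directly from the binary decomposition \eqref{bindecomp}, $N=2^{n_1}+\cdots+2^{n_p}$ with $n_1>\cdots>n_p\ge 0$, so that $\theta_k=2^{n_k}/N$ for $k\le p$ and $\theta_k=0$ for $k>p$. For $k>p$ the inequalities \eqref{firstineq} and \eqref{thirdineq} are trivial because both sides vanish or the left side is $0$. Likewise the last inequality follows from the case $k=p$ because its left side equals $1$. So it suffices to treat $1\le k\le p$.

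\textbf{Step 1.} For $\theta_1$, I use $2^{n_1}\le N\le \sum_{m=0}^{n_1}2^m=2^{n_1+1}-1<2^{n_1+1}$. This gives $1/2<\theta_1\le 1$ immediately.

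\textbf{Step 2.} Inequality \eqref{thirdineq} is exactly \eqref{ineqtheta}, already proved in Proposition \ref{propboundedseq}. The tail sum satisfies
\[
\sum_{j>k}2^{n_j}\le 2^{n_{k+1}+1}-1\le 2^{n_k}-1<2^{n_k},
\]
and dividing by $N$ gives the claim.

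\textbf{Step 3.} For \eqref{firstineq}, the point is that $n_1\ge n_k+k-1$, since the exponents are strictly decreasing integers. Hence
\[
N\ge \sum_{j=1}^{k}2^{n_j}\ge 2^{n_k}\sum_{i=0}^{k-1}2^{i}=2^{n_k}(2^k-1),
\]
because the $j$th exponent satisfies $n_j\ge n_k+(k-j)$. Therefore $\theta_k\le 1/(2^k-1)$. This refines the cruder bound \eqref{boundforthetak}.

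\textbf{Step 4.} For the last inequality, write $\sum_{j\le k}\theta_j=1-\sum_{j>k}\theta_j$. Step 2 gives $1-\sum_{j>k}\theta_j\ge 1-\theta_k$. I then need $\theta_k\le 2^{-(k-1)}$, which is \eqref{boundforthetak}: $\theta_k\le 2^{n_k}/2^{n_1}\le 2^{-(k-1)}$. Combining the two yields the stated lower bound.

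None of these steps is a real obstacle. The only point requiring care is the exponent-gap argument in Step 3, where the geometric-series estimate must use all the first $k$ terms rather than just the leading one in order to get $2^k-1$ instead of $2^{k-1}$.
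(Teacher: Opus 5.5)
Your proof is correct and follows essentially the same route as the paper: the same geometric-series bound $N\ge 2^{n_k}(2^k-1)$ for \eqref{firstineq}, the appeal to \eqref{ineqtheta} for \eqref{thirdineq}, and $2^{n_1}\le N<2^{n_1+1}$ for $\theta_1$. The only cosmetic difference is in the last inequality: you bound the tail $\sum_{j>k}\theta_j$ by $\theta_k\le 2^{-(k-1)}$ via \eqref{thirdineq}, whereas the paper sums the termwise bounds $\theta_j\le 2^{-j+1}$, and both give $2^{-(k-1)}$.
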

\begin{proof}
\eqref{firstineq} is trivial for $k> p=\tau_b(N)$. As for $1\leq k\leq p$, writing $N$ as in \eqref{bindecomp} we obtain 
\[
\theta_{k}=\frac{2^{n_{k}}}{N}=\frac{1}{\sum_{i=1}^{p}2^{n_{i}-n_{k}}}\leq \frac{1}{\sum_{i=1}^{k}2^{n_{i}-n_{k}}}\leq \frac{1}{\sum_{i=1}^{k}2^{k-i}}=\frac{1}{2^{k}-1}.
\]
In the case $k=1$, we additionally  have the lower bound 
\[
\theta_{1}=\frac{1}{\sum_{i=1}^{p}2^{n_{i}-n_{1}}}>\frac{1}{2}.
\]
The inequality \eqref{thirdineq} follows from \eqref{ineqtheta}. Since $\sum_{j=1}^{\infty}\theta_{j}=1$ and $\theta_{j}\leq 2^{-j+1}$, we have
\[
\sum_{j=1}^{k}\theta_{j}=1-\sum_{j=k+1}^{\infty}\theta_{j}\geq 1-\sum_{j=k+1}^{\infty}2^{-j+1}=1-2^{-k+1}.
\] 
\end{proof}

Consider now the set of real-valued sequences 
\[
X:=\{\vec{x}=(x_1,x_2,\ldots,x_n,\ldots)\in \R^\N: |x_k|\leq 2^{-k+1}, \ k\geq 1\}.
\]
This is a metric subspace of $\ell^\infty$
with the metric induced by the $\ell^\infty$-norm
\[
\|\vec{x}-\vec{y}\|_{\ell^\infty}=\sup_{k\geq 1}|x_{k}-y_{k}|.
\] 
One can easily verify that a sequence $\vec{x}_N=(x_{1,N},\ldots,x_{n,N},\ldots)$, $N\geq 1$, of elements in $X$ converges to an element $\vec{x}=(x_1,x_2,\ldots,x_n,\ldots)\in \ell^\infty$ if and only if it does it in a pointwise sense:
\[
\lim_{N\to\infty}x_{k,N}=x_k, \qquad k\geq 1,
\] 
in which case the limit $\vec{x}\in X$. The set $X$ is closed, as it is clear that the limit $\vec{x}$ of a sequence $(\vec{x}_N)\subset X$ is also a member of $X$. Moreover, since every bounded sequence of elements in $\ell^\infty$ has a pointwise convergent subsequence, the space $X$ is compact.

In view of \eqref{firstineq} (and since $(2^{k}-1)^{-1}\leq 2^{-(k-1)}$ for all $k\geq 1$), we have  
\begin{equation}\label{defSeta}
\mathcal{S}_\eta:=\{\eta(N): N\in \N\}\subset X.
\end{equation}
We define $\mathcal{S}:=\cj{\mathcal{S}_\eta}$
to be the closure of $\mathcal{S}_\eta$ in $X$. Since $X$ is compact, so is $\mathcal{S}$. 

Due to the periodicity property that $\eta(2N)=\eta(N)$, every element $\eta(N)\in \mathcal{S}_\eta$ can be obtained as the limit of elements $\eta(N_k)\in \mathcal{S}_\eta$ for some subsequence $(N_k)$ of the natural numbers (say e.g.  $N_k=2^kN$, $k\geq 1$). Hence, $\mathcal{S}$ coincides with the set  of all sequences  
\[
(\vartheta_{1},\vartheta_{2}, \vartheta_{3},\ldots,\vartheta_{n},\ldots)
\]
such that  there exists a subsequence  $\mathcal{N}$ of  the natural numbers    such that 
\begin{equation}\label{17-7-2}
\vartheta_{j}=\lim_{N\in \mathcal{N}}\theta_{j,N},\qquad j\geq 1,
\end{equation}
where
\begin{equation}\label{20-7-1}
\eta(N)=(\theta_{1,N},\theta_{2,N},\ldots,\theta_{n,N},\ldots),\qquad N\in  \mathcal{N}.
\end{equation}

For example, if we take the sequence $N_{k}=2^{k}-1=2^{k-1}+2^{k-2}+\cdots+1$, $k\in \N$, then 
\[
\lim_{k\rightarrow\infty}\theta_{j,N_k}=\lim_{k\rightarrow\infty}\frac{2^{k-j}}{2^{k}-1}=2^{-j},
\] 
hence the vector 
\[
\left(\frac{1}{2},\frac{1}{2^{2}},\frac{1}{2^{3}},\ldots,\frac{1}{2^{n}},\ldots\right)\in\mathcal{S}.
\]
Another example is the vector
\[
\left(\frac{3}{4},\frac{3}{4^{2}},\frac{3}{4^{3}},\ldots,\frac{3}{4^{n}},\ldots\right)\in\mathcal{S},
\] 
obtained by taking $N_{k}=\frac{4^{k}-1}{3}=4^{k-1}+4^{k-2}+\cdots+1$.

\begin{proposition}\label{propertiesvarthetavectors}
If $(\vartheta_{1},\vartheta_{2},\vartheta_{3},\ldots)\in\mathcal{S}$, then	
\begin{align}
	1/2\leq \vartheta_{1}\leq{} & 1,\label{varthetaprop1}\\
	\sum_{k=1}^{\infty}\vartheta_{k}={} &1,\label{varthetaprop2}\\
		\sum_{k=1}^{\infty}\vartheta_{k}\,|\log \vartheta_{k}|\leq{} & \log 4,\label{varthetaprop6}
\end{align}
and  for each $ k\geq 1$, we  have
\begin{align}
0\leq 	\vartheta_{k} \leq{} & \frac{1}{2^{k}-1},\label{varthetaprop3}\\
	\sum_{j=k+1}^{\infty}\vartheta_{j} \leq{} & \vartheta_{k},\label{varthetaprop5}\\
	\sum_{j=1}^{k}\vartheta_{j}  \geq {} &1-\frac{1}{2^{k-1}}.\label{varthetaprop4}
\end{align}
\end{proposition}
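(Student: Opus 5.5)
Fix $(\vartheta_k)\in\mathcal{S}$ and a subsequence $\mathcal{N}$ with $\vartheta_j=\lim_{N\in\mathcal{N}}\theta_{j,N}$ for every $j$, as in \eqref{17-7-2}. The plan is to pass to the limit in the finite-$N$ inequalities of Proposition \ref{propineqs}, and to get the infinite-sum statements from uniform tail control. The pointwise bounds are immediate in the limit. From $1/2<\theta_{1,N}\le 1$ we get \eqref{varthetaprop1}. From $0\le\theta_{k,N}\le (2^k-1)^{-1}$ we get \eqref{varthetaprop3}.

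For the sums, I use the uniform bound $0\le\theta_{j,N}\le 2^{-j+1}$. Since $\sum_j 2^{-j+1}<\infty$, dominated convergence for series (Tannery's theorem) applies. It lets us pass to the limit in $\sum_{j\ge1}\theta_{j,N}=1$, giving \eqref{varthetaprop2}. It also handles $\sum_{j\ge k+1}\theta_{j,N}\le\theta_{k,N}$, which gives \eqref{varthetaprop5}. Then \eqref{varthetaprop4} follows exactly as in the proof of Proposition \ref{propineqs}: write $\sum_{j\le k}\vartheta_j=1-\sum_{j>k}\vartheta_j$ and bound the tail by $\sum_{j>k}2^{-j+1}=2^{-k+1}$. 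It is also the limit of a finite sum, so it passes directly.

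The only step needing care is \eqref{varthetaprop6}, because $t|\log t|$ is not monotone in $t$. I would use Fatou's lemma for series. Each term $\theta_{j,N}|\log\theta_{j,N}|$ is nonnegative, with the convention $0\log 0=0$, and converges to $\vartheta_j|\log\vartheta_j|$ because $t\mapsto t|\log t|$ is continuous on $[0,1]$. Hence
\[
\sum_{j}\vartheta_j|\log\vartheta_j|\le\liminf_{N\in\mathcal{N}}\sum_j\theta_{j,N}|\log\theta_{j,N}|\le\log 4,
\]
using \eqref{knownbound}. The strict bound there becomes non-strict in the limit.

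Alternatively, domination works here too. For $j\ge 3$ we have $\theta_{j,N}\le 2^{-j+1}\le e^{-1}$, and $t|\log t|$ is increasing on $[0,e^{-1}]$. So the terms are bounded by $2^{-j+1}(j-1)\log 2$, which is summable, and one gets equality of limits. Either way, the inequality follows.
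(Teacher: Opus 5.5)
Your proposal is correct and follows the paper's proof: you pass to the limit in Proposition~\ref{propineqs}, using the domination $\theta_{j,N}\le 2^{-j+1}$, and for \eqref{varthetaprop6} the bound $\theta_{j,N}|\log\theta_{j,N}|\le (j-1)\log(2)\,2^{-j+1}$ for $j\ge 3$ together with \eqref{knownbound}. That domination bound is your ``alternative'' argument and is exactly what the paper uses; your Fatou route is a slightly leaner variant that needs only nonnegativity and continuity of $t|\log t|$, which suffices because only an inequality is required.
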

\begin{proof} Let $\mathcal{N}$ be a subsequence of $\N$ such that (with the notation \eqref{20-7-1})
	\[
	\vartheta_{j}=\lim_{N\in\mathcal{N}}\theta_{j,N},\qquad j\geq 1.
	\] 
	Since $\theta_{k,N}\leq 2^{-k+1}$ for all $k\geq 1$, we have 
\begin{equation}\label{Lebdom1}
	\sum_{k=1}^{\infty}\theta_{k,N}\leq\sum_{k=1}^{\infty}2^{-(k-1)}<\infty.
\end{equation}
The function $x\log x$ is decreasing on the interval $[0,e^{-1}]$, hence for every $k\geq 3$ the inequalities $\theta_{k,N}\leq 2^{-k+1}<e^{-1}$ imply that $\theta_{k,N}|\log \theta_{k,N}|\leq (k-1)\log(2)\,2^{-k+1}$, thus	
\begin{equation}\label{Lebdom2}
\sum_{k=1}^{\infty}\theta_{k,N}|\log\theta_{k,N}|\leq \frac{2}{e}+\log (2)\sum_{k=3}^{\infty}(k-1)2^{-(k-1)}<\infty.
\end{equation}
By an application of Lebesgue's dominated convergence theorem, it follows from \eqref{Lebdom1}, \eqref{Lebdom2}, and \eqref{knownbound} that 
	\[
	\sum_{k=1}^{\infty}\vartheta_{k}=\lim_{N\in \mathcal{N}}\sum_{k=1}^{\infty}\theta_{k,N}=1,\qquad\sum_{k=1}^{\infty}\vartheta_{k}|\log\vartheta_k|=\lim_{N\in \mathcal{N}}\sum_{k=1}^{\infty}\theta_{k,N}|\log\theta_{k,N}|\leq \log 4.
	\] 
This establishes \eqref{varthetaprop2} and \eqref{varthetaprop6}. The remaining relations \eqref{varthetaprop1},   \eqref{varthetaprop3},  \eqref{varthetaprop5}, and  \eqref{varthetaprop4} follow directly from Proposition \ref{propineqs} by passage to the limit (in deriving  \eqref{varthetaprop5} we can simply write 	$	\sum_{j=k+1}^{\infty}\theta_{j,N}=1-\sum_{j=1}^{k}\theta_{j,N}\to 1-\sum_{j=1}^{k}\vartheta_{j}=\sum_{j=k+1}^{\infty}\vartheta_{j}$).
\end{proof}

\begin{definition}\label{def:extensionofHKR}
For a vector $\vec{\vartheta}=(\vartheta_{1}, \vartheta_{2},\ldots,\vartheta_{n},\ldots)\in\mathcal{S}$ and $s>-1$, we define
\begin{align}
H(\vec{\vartheta};s) & :=\sum_{n=1}^{\infty}\vartheta_{n}^{s+1}+2(2^{s}-1)\sum_{n=1}^{\infty}\sum_{j=n+1}^{\infty}\vartheta_{n}^{s}\,\vartheta_{j},\label{19-7-1}\\
K(\vec{\vartheta}) & :=2\log 2+\sum_{n=1}^{\infty}\vartheta_{n}^{2}\,\log(\vartheta_{n}/4)+2\sum_{n=1}^{\infty}\sum_{j=n+1}^{\infty}\vartheta_{j}\,\vartheta_{n}\log\vartheta_{n},\label{19-7-2}\\
R(\vec{\vartheta}) & :=-(2\log 2)\sum_{n=1}^{\infty} (n-1)\vartheta_{n}-\sum_{n=1}^{\infty}\vartheta_{n}\log \vartheta_{n}.\label{19-7-3}
\end{align}
Naturally, in these expressions we understand that if $\vartheta_{n}=0$, then $\vartheta_{n}\log\vartheta_{n}=0$ and $\vartheta_{n}^{s}\,\vartheta_{j}=0$ for $-1<s<0$ and $j\geq n+1$.
\end{definition}
\begin{remark}
Observe that for all $N\in\N$, the values $H(\eta(N);s)$, $K(\eta(N))$, and $R(\eta(N))$ as defined by \eqref{19-7-1}--\eqref{19-7-3} coincide with those previously defined by \eqref{def:H}, \eqref{def:K}, and \eqref{def:Phi}. 
\end{remark}

\begin{proposition}\label{prop_contHKR}
The functions given by \eqref{19-7-1}--\eqref{19-7-3} are well-defined for each $\vec{\vartheta}\in\mathcal{S}$, and each of the functions  $H(\vec{\vartheta};s)$, $s>-1$, $K(\vec{\vartheta})$, and $R(\vec{\vartheta})$ is continuous on $\mathcal{S}$.
\end{proposition}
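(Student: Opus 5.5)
The plan is to write each of the three functions as a series whose terms are continuous in $\vec{\vartheta}$ for the topology of $X$, and to dominate those terms by a summable sequence independent of $\vec{\vartheta}\in\mathcal{S}$. Because convergence in $X$ is the same as coordinatewise convergence, the Weierstrass M-test then gives uniform convergence on $\mathcal{S}$. Uniform convergence yields both that the series converge (so the functions are well-defined) and that the sums are continuous.

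\emph{Rewriting the double sums.} For $\vec{\vartheta}\in\mathcal{S}$, \eqref{varthetaprop2} gives $\sum_{j=n+1}^{\infty}\vartheta_j=1-\sum_{j=1}^{n}\vartheta_j$, which is a finite expression and hence continuous in $\vec{\vartheta}$. Also, \eqref{varthetaprop5} gives $0\leq \sum_{j>n}\vartheta_j\leq\vartheta_n$. Using these, I rewrite
\[
H(\vec{\vartheta};s)=\sum_{n=1}^{\infty}\vartheta_n^{s}\Big(\vartheta_n+2(2^s-1)\big(1-\textstyle\sum_{j=1}^{n}\vartheta_j\big)\Big).
\]
Each term is continuous on $\mathcal{S}$. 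This is clear for $s\geq 0$. For $-1<s<0$ the term is bounded in absolute value by $C\vartheta_n^{s+1}$, which tends to $0$ as $\vartheta_n\to0$, so it is continuous with the stated convention $\vartheta_n^s\vartheta_j=0$. Moreover $|\text{term}_n|\leq (1+2|2^s-1|)\vartheta_n^{s+1}\leq C\,2^{-(n-1)(s+1)}$ by \eqref{varthetaprop3}. This bound is summable since $s+1>0$.

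\emph{The functions $K$ and $R$.} For $K$ the terms are $\vartheta_n^2\log(\vartheta_n/4)+2\vartheta_n\log\vartheta_n\,(1-\sum_{j\leq n}\vartheta_j)$. Each is continuous because $x\log x\to0$ as $x\to 0^+$, and each is bounded by $C\vartheta_n|\log\vartheta_n|+C\vartheta_n^2$. The function $x|\log x|$ is increasing on $[0,e^{-1}]$, and $\vartheta_n\leq 2^{-n+1}$, so for $n\geq3$ we get $\vartheta_n|\log\vartheta_n|\leq (n-1)\log 2\cdot 2^{-n+1}$. This is the same domination already used in \eqref{Lebdom2}, and it is summable. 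For $R$ the terms $(n-1)\vartheta_n$ and $\vartheta_n\log\vartheta_n$ are dominated by $(n-1)2^{-n+1}$ and by the same bound, respectively. In every case the dominating sequence is independent of $\vec{\vartheta}$, so the M-test applies.

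\emph{Main obstacle.} The only delicate point is the case $-1<s<0$ in $H$. Here $\vartheta_n^s$ blows up as $\vartheta_n\to0$, so the original double-sum form has terms that are not individually continuous or bounded. The rewriting through \eqref{varthetaprop5}, which exploits the factor $\sum_{j>n}\vartheta_j\leq\vartheta_n$, is exactly what tames this: it reduces each term to $O(\vartheta_n^{s+1})$. I also need to check that the rearrangement of the double sum into this form is legitimate. For $s\geq0$ it holds because all terms are nonnegative. For $s<0$ I can take the rewritten form as the meaning of \eqref{19-7-1}, since its inner sums are nonnegative and converge.
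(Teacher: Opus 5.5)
Your proof is correct and takes essentially the same route as the paper. Both rewrite the tails as $1-\sum_{j\le n}\vartheta_j$, dominate each term uniformly on $\mathcal{S}$ by a summable sequence (e.g.\ $C\,2^{-(n-1)(s+1)}$ via \eqref{varthetaprop3} and \eqref{varthetaprop5}), and pass to the limit termwise; the paper uses dominated convergence along sequences where you use the Weierstrass M-test. Your explicit check, via the bound $C\vartheta_n^{s+1}$, that each term of $H$ is continuous where $\vartheta_n=0$ for $-1<s<0$ fills in a detail the paper leaves implicit.
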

\begin{proof}
Let $\vec{\vartheta}_N=(\vartheta_{1,N}, \vartheta_{2,N},\dots)$, $N\geq 1$, be a sequence in $\mathcal{S}$ converging to $\vec{\vartheta}=(\vartheta_{1},\vartheta_{2},\ldots)\in\mathcal{S}$.  By Proposition \ref{propertiesvarthetavectors}, we have that for all $n,N\geq 1$, 
\[
\sum_{j=n+1}^{\infty}\vartheta_{j,N}\leq \vartheta_{n,N}\leq \frac{1}{2^{n-1}},
\]
so that 
\[
\left|\vartheta_{n,N}^{s+1}+2(2^s-1)\vartheta_{n,N}^{s}\sum_{j=n+1}^{\infty}\vartheta_{j,N}\right|\leq (2^{s+1}+3)2^{-(n-1)(s+1)} 
\]
(note that $2^{s}-1<0$ for $-1<s<0$). This implies that $H(\cdot;s)$ is well-defined on $\mathcal{S}$, and moreover, since for all $n\geq 1$, we have
\[
\lim_{N\to\infty}\left(\vartheta_{n,N}^{s+1}+2(2^s-1)\vartheta_{n,N}^{s}\sum_{j=n+1}^{\infty}\vartheta_{j,N}\right)=\vartheta_{n}^{s+1}+2(2^s-1)\vartheta_{n}^{s}\sum_{j=n+1}^{\infty}\vartheta_{j}
\]
(recall that $\sum_{j=n+1}^{\infty}\vartheta_{j,N}=1-\sum_{j=1}^n\vartheta_{j,N})$, it also implies by the Lebesgue dominated convergence theorem that 
\[
\lim_{N\to\infty}H(\vec{\vartheta}_N;s)=H(\vec{\vartheta};s),
\]
proving the continuity of $H(\cdot;s)$ on $\mathcal{S}$.

The function $x\log x$ is $\leq 0$ on $[0,1]$ and attains its minimum at $x=e^{-1}$.  Hence we have  $\vartheta_{n,N}\,|\log \vartheta_{n,N}|\leq e^{-1}$ for all $n\geq 1$, and so  
 \[
 \left|\vartheta_{n,N}^{2}\log(\vartheta_{n,N}/4)+2\vartheta_{n,N}\log\vartheta_{n,N}\sum_{j=n+1}^{\infty}\vartheta_{j,N}\right|\leq 6e^{-1}\vartheta_{n,N}\leq   6e^{-1}2^{1-n}.
 \]
Note also that the function $x\log x$ is decreasing on the interval $[0,e^{-1}]$ and $\vartheta_{n,N}\leq (2^{n}-1)^{-1}<e^{-1}$ for all $n\geq 2$, so that $\vartheta_{n,N}\,|\log \vartheta_{n,N}|\leq 2^{-n+1}\log(2^{n}-1)$ for all $n\geq 2$, and thus
 \[
|(2\log(2) (n-1)+\log \vartheta_{n,N})\,\vartheta_{n,N}|\leq 2\log(2)(n-1)2^{-n+1}+2^{-n+1}\log(2^{n}-1),\quad n\geq 2.
\]
The last two inequalities show that the functions $K(\cdot)$ and  $R(\cdot)$  are well-defined and continuous on $\mathcal{S}$.
\end{proof}

\begin{definition}
We say that $x\in\mathbb{R}$ is a limit point of a bounded sequence $(x_{n})_{n=1}^{\infty}$ of real numbers if there is a subsequence of $(x_{n})_{n=1}^{\infty}$ that converges to $x$.
\end{definition}

The following result is a direct consequence of the continuity of the functions $H(\cdot;s)$, $K(\cdot)$, and  $R(\cdot)$ on the set $\mathcal{S}$, and the fact that $\mathcal{S}$ is compact.

\begin{theorem}\label{theo:limpointHKR}
For each $s>-1$, the set $\{H(\vec{\vartheta};s): \vec{\vartheta}\in\mathcal{S}\}$ is the set of all limit points of the sequence $(H(\eta(N);s))_{N}$. Likewise, $\{K(\vec{\vartheta}): \vec{\vartheta}\in\mathcal{S}\}$ and $\{R(\vec{\vartheta}): \vec{\vartheta}\in\mathcal{S}\}$ are the sets of limit points of the sequences $(K(\eta(N)))_{N}$ and $(R(\eta(N)))_{N}$, respectively.
\end{theorem}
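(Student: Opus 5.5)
The plan is to prove the two inclusions separately. The argument is the same for $H(\cdot;s)$ with $s>-1$, for $K$, and for $R$, so we write $F$ for any of these functions on $\mathcal{S}$. The sequence in question is $(F(\eta(N)))_N$, and by the Remark after Definition \ref{def:extensionofHKR} it coincides with the originally defined values.

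\emph{Every limit point lies in $F(\mathcal{S})$.} Suppose $y$ is a limit point, so that $F(\eta(N_k))\to y$ along a subsequence $(N_k)$. Recall that $\mathcal{S}_\eta\subset X$ by \eqref{defSeta}, and that $X$ is compact. Passing to a further subsequence, $\eta(N_k)$ converges in $X$ to some $\vec{\vartheta}$, and $\vec{\vartheta}\in\mathcal{S}$ because $\mathcal{S}$ is the closure of $\mathcal{S}_\eta$. Proposition \ref{prop_contHKR} says $F$ is continuous on $\mathcal{S}$, so $y=\lim F(\eta(N_k))=F(\vec{\vartheta})$.

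\emph{Every value in $F(\mathcal{S})$ is a limit point.} Take $\vec{\vartheta}\in\mathcal{S}$. By the characterization \eqref{17-7-2}, there is a subsequence $\mathcal{N}$ of $\mathbb{N}$ along which $\eta(N)\to\vec{\vartheta}$ pointwise. Since all these vectors lie in $X$, this is the same as convergence in $\ell^\infty$. Continuity of $F$ then gives $F(\eta(N))\to F(\vec{\vartheta})$ along $\mathcal{N}$.

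The one point that needs checking is that $\mathcal{N}$ can be taken to be a genuine subsequence, i.e.\ strictly increasing and unbounded, rather than a sequence that might repeat the same $N$ infinitely often. The doubling periodicity handles this: if $\vec{\vartheta}=\eta(N_0)$, use $N_k=2^kN_0$, for which $\eta(N_k)=\eta(N_0)$ for every $k$. In general, from any approximating sequence $\eta(M_k)\to\vec{\vartheta}$, replace $M_k$ by $2^{j_k}M_k$ with $j_k$ chosen so that the resulting integers are strictly increasing; the vectors $\eta(\cdot)$ are unchanged, so the convergence is preserved. This is exactly the remark made before \eqref{17-7-2}.

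No step is a serious obstacle, since the substantive ingredients, compactness of $\mathcal{S}$ and continuity of $F$ (Proposition \ref{prop_contHKR}), are already established.
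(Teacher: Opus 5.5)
Your proposal is correct and follows the paper's route. The paper derives this theorem directly from the compactness of $\mathcal{S}$ and the continuity of $H(\cdot;s)$, $K$, and $R$ on $\mathcal{S}$ (Proposition \ref{prop_contHKR}). For the reverse inclusion it uses the subsequence characterization \eqref{17-7-2}, which rests on $\eta(2N)=\eta(N)$, exactly as you do.
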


\section{Characterizing the set $\mathcal{S}$}\label{characterization-set-S}

\begin{proposition}\label{prop_19-7-4}
	If $(\vartheta_{1},\vartheta_{2},\vartheta_{3},\ldots)\in \mathcal{S}$, and $\vartheta_{k}>0$ for some $k\geq 2$, then
	\[
	\vartheta_{k}=\frac{\vartheta_{k-1}}{2^{r}}
	\]
	for some integer $r\geq 1$ that may depend on $k$.
\end{proposition}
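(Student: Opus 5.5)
Let $\mathcal{N}$ be a subsequence of $\N$ realizing $\vec{\vartheta}$ as in \eqref{17-7-2}--\eqref{20-7-1}, so that $\vartheta_j=\lim_{N\in\mathcal{N}}\theta_{j,N}$ for every $j\geq 1$.

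\emph{Step 1: the ratio is exactly a power of two along the sequence.} Fix $k\geq 2$ with $\vartheta_k>0$. Then $\theta_{k,N}>0$ for all large $N\in\mathcal{N}$, so $\tau_b(N)\geq k$. Writing $N$ as in \eqref{bindecomp}, we have $\theta_{k-1,N}>0$ and
\[
\frac{\theta_{k,N}}{\theta_{k-1,N}}=2^{n_k-n_{k-1}}=2^{-r_N},
\]
where $r_N:=n_{k-1}-n_k\geq 1$ is an integer.

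\emph{Step 2: the exponents are bounded.} Using \eqref{thirdineq}, or simply $\theta_{k,N}\leq\theta_{k-1,N}\leq 1$, we get $2^{-r_N}=\theta_{k,N}/\theta_{k-1,N}\geq \theta_{k,N}$. Since $\theta_{k,N}\to\vartheta_k>0$, for large $N\in\mathcal{N}$ we have $\theta_{k,N}\geq\vartheta_k/2$. Hence
\[
r_N\leq \log_2(2/\vartheta_k),
\]
so $(r_N)$ takes only finitely many values.

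\emph{Step 3: pass to the limit.} Some integer value $r\geq 1$ is attained for infinitely many $N\in\mathcal{N}$. Along that further subsequence, $\theta_{k,N}=2^{-r}\theta_{k-1,N}$ holds exactly. Letting $N\to\infty$ gives $\vartheta_k=2^{-r}\vartheta_{k-1}$, as claimed.

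The step requiring care is Step 2, the boundedness of $r_N$. This is exactly where the hypothesis $\vartheta_k>0$ is needed. Without it, $r_N\to\infty$ is possible, and the limiting ratio would not be of the form $2^{-r}$ (for instance, $\vartheta_k=0$ with $\vartheta_{k-1}>0$). The positivity of $\vartheta_k$ also guarantees $\vartheta_{k-1}\geq\vartheta_k>0$, by passing to the limit in $\theta_{k,N}\leq\theta_{k-1,N}$.
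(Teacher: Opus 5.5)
Your proof is correct and follows essentially the same route as the paper: along a realizing sequence $\mathcal{N}$ the ratio $\theta_{k-1,N}/\theta_{k,N}=2^{n_{k-1}-n_k}$ is an integer power of $2$ that is at least $2$, and the positivity of $\vartheta_k$ prevents this power from escaping to infinity. The paper simply notes that this integer sequence converges to $\vartheta_{k-1}/\vartheta_k$ and is therefore eventually constant, whereas you bound the exponent $r_N$ explicitly and pass to a subsequence on which it is constant; the two steps are equivalent.
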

\begin{proof}
	Let $\mathcal{N}\subset\N$ be a sequence of integers $N$ such that 
	\[
	\vartheta_{j}=\lim_{N\in\mathcal{N}}\theta_{j,N},\qquad j\geq 1.
	\] 
	Since $\vartheta_{k}>0$ and $\theta_{k,N}\to \vartheta_{k}$ as $N\to\infty$, there must be a natural number $M$ such that $\theta_{k,N}>0$ for all $N\in\mathcal{N}$ such that $N>M$. This implies that $\theta_{j,N}=2^{n_j}/N$ for all $1\leq j\leq k$ and $N>M$. Thus we have
	\[
	\frac{\vartheta_{k-1}}{\vartheta_{k}}=\lim_{N\in\mathcal{N}}\frac{2^{n_{k-1}}}{N}\frac{N}{2^{n_{k}}}=\lim_{N\in\mathcal{N}}2^{n_{k-1}-n_{k}}.
	\]
	Therefore $2^{n_{k-1}-n_{k}}\in\N$ is eventually constant and the claim follows. 
\end{proof}

\begin{remark}An immediate consequence of Proposition \ref{prop_19-7-4} is that if
	$(\vartheta_{1},\vartheta_{2},\vartheta_{3},\ldots)\in\mathcal{S}$, then there exists $m\in \N\cup\{\infty\}$, $m\geq 2$,  such that $\vartheta_{1}>\vartheta_{2}>\cdots>\vartheta_{m-1}>0$ and  $\vartheta_{k}=0$ for all $k\geq m$.  
\end{remark}

The following is a characterization of $\mathcal{S}$.

\begin{proposition}\label{prop_characS}
Let $\vec{\vartheta}=(\vartheta_{1}, \vartheta_{2},\ldots)$ be an infinite vector of non-negative real numbers such that $\vartheta_{j}\geq \vartheta_{j+1}$ for all $j\geq 1$. Then $\vec{\vartheta}\in\mathcal{S}$ if and only if $1/2\leq \vartheta_{1}\leq 1$ and for each $j$ such that $\vartheta_{j}>0$ we have $\vartheta_{j}=\vartheta_{1} 2^{-k_{j}}$, where $0=k_{1}<k_{2}<k_{3}<\cdots$ is an increasing sequence of non-negative integers such that
\begin{equation}\label{7-8-1}
\frac{1}{\vartheta_{1}}=1+\frac{1}{2^{k_{2}}}+\frac{1}{2^{k_{3}}}+\cdots
\end{equation}
is a binary expansion of $1/\vartheta_{1}$ (of finite or infinite length).
\end{proposition}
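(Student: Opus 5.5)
Necessity will come from Proposition \ref{propertiesvarthetavectors} and Proposition \ref{prop_19-7-4}; sufficiency from an explicit approximating sequence of integers.

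\emph{Necessity.} Let $\vec{\vartheta}\in\mathcal{S}$. By \eqref{varthetaprop1}, $1/2\leq\vartheta_1\leq 1$, so $\vartheta_1>0$. By the Remark following Proposition \ref{prop_19-7-4}, the positive entries form an initial block $\vartheta_1>\cdots>\vartheta_{m-1}>0$ with $m\in\N\cup\{\infty\}$. Applying Proposition \ref{prop_19-7-4} successively, each positive $\vartheta_j$ equals $\vartheta_{j-1}2^{-r_j}$ with $r_j\geq 1$. Hence $\vartheta_j=\vartheta_1 2^{-k_j}$, where $k_j=r_2+\cdots+r_j$ is strictly increasing and $k_1=0$. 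Dividing \eqref{varthetaprop2} by $\vartheta_1$ then gives \eqref{7-8-1}, a (finite or infinite) binary expansion of $1/\vartheta_1$.

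\emph{Sufficiency.} Suppose $1/2\leq\vartheta_1\leq1$ and \eqref{7-8-1} holds with positive entries $\vartheta_j=\vartheta_1 2^{-k_j}$ for $j<m$ and zeros afterwards.

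If the expansion is finite ($m<\infty$), take $N=2^{k_{m-1}}+2^{k_{m-1}-k_2}+\cdots+2^{k_{m-1}-k_{m-1}}$. Its binary digits sit at the distinct exponents $k_{m-1}-k_j$, which decrease in $j$. Then $N=2^{k_{m-1}}/\vartheta_1$, and so $\eta(N)=(2^{k_{m-1}-k_j}/N)_j=(\vartheta_1 2^{-k_j})_j=\vec{\vartheta}$. Thus $\vec{\vartheta}\in\mathcal{S}_\eta\subset\mathcal{S}$.

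If the expansion is infinite, set $N_\ell:=\sum_{j=1}^{\ell}2^{k_\ell-k_j}$, the truncation of $2^{k_\ell}(1+2^{-k_2}+\cdots)$ to $\ell$ terms. Then $\eta(N_\ell)$ has $j$-th entry $2^{k_\ell-k_j}/N_\ell=2^{-k_j}/S_\ell$ for $j\leq\ell$ and $0$ for $j>\ell$, where $S_\ell=\sum_{j\leq\ell}2^{-k_j}\to1/\vartheta_1$. So for each fixed $j$ the $j$-th entry tends to $\vartheta_1 2^{-k_j}=\vartheta_j$. Since pointwise convergence in $X$ is convergence in $\ell^\infty$, we get $\eta(N_\ell)\to\vec{\vartheta}$, and hence $\vec{\vartheta}\in\mathcal{S}$.

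The case $\vartheta_1=1/2$ needs no special treatment. Then $1/\vartheta_1=2$ has only the infinite expansion $1+1/2+1/4+\cdots$, since the finite form would need a second term $2^0$ with $k_2=0$, violating strict increase. The infinite case above therefore covers it, matching the example $N=2^k-1$. Likewise $\vartheta_1=1$ gives only $k_1$, i.e.\ the finite case $N=1$.

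The main point needing care is necessity. It rests on Proposition \ref{prop_19-7-4} together with the guarantee from the Remark that zeros only occur as a tail. Once the structure $\vartheta_j=\vartheta_1 2^{-k_j}$ is established, the sum condition \eqref{varthetaprop2} immediately gives the binary expansion \eqref{7-8-1}.
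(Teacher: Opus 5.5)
Your proposal is correct and follows essentially the same route as the paper. Necessity comes from the bounds on $\vartheta_1$, Proposition \ref{prop_19-7-4} and $\sum_j\vartheta_j=1$; sufficiency takes $N=2^{k}/\vartheta_1$ in the finite case and the truncations $N_\ell=\sum_{j\le\ell}2^{k_\ell-k_j}$ in the infinite case. Your extra care with the closure in $X$ (pointwise versus $\ell^\infty$ convergence) and with the edge cases $\vartheta_1\in\{1/2,1\}$ is sound but not needed beyond what the paper does.
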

\begin{proof}
Suppose first that $\vec{\vartheta}\in\mathcal{S}$. Then, it follows from Proposition \ref{propineqs} and \eqref{17-7-2} that $1/2\leq \vartheta_{1}\leq 1$. By Proposition \ref{prop_19-7-4}, if $\vartheta_{j}>0$, then $\vartheta_{j}=\vartheta_{1} 2^{-k_{j}}$ for some non-negative integer $k_{j}$, and we have $0=k_{1}<k_{2}<k_{3}<\cdots$. Since $\sum_{j=1}^{\infty}\vartheta_{j}=1$, \eqref{7-8-1} follows.

Suppose now that the vector $\vec{\vartheta}=(\vartheta_{1},\vartheta_{2},\ldots)$ satisfies $1/2\leq \vartheta_{1}\leq 1$, and $\vartheta_{j}=\vartheta_{1} 2^{-k_{j}}$ for each positive $\vartheta_{j}$, where $0=k_{1}<k_{2}<\cdots$ is an increasing sequence of integers such that \eqref{7-8-1} holds. If $\vec{\vartheta}$ has only finitely many positive components, say $m$ of them, then
\[
\frac{1}{\vartheta_{1}}=1+\frac{1}{2^{k_{2}}}+\frac{1}{2^{k_{3}}}+\cdots+\frac{1}{2^{k_{m}}}=\frac{2^{k_{m}}+2^{k_{m}-k_{2}}+2^{k_{m}-k_{3}}+\cdots+1}{2^{k_{m}}},
\] 
and so if we take $N=2^{k_{m}}+2^{k_{m}-k_{2}}+2^{k_{m}-k_{3}}+\cdots+1$, then
\[
(\vartheta_{1},\ldots,\vartheta_{m})=\left(\vartheta_{1},\frac{\vartheta_{1}}{2^{k_{2}}},\ldots,\frac{\vartheta_{1}}{2^{k_{m}}}\right)=\eta(N).
\] 
Therefore, $\vec{\vartheta}\in \mathcal{S}_{\eta}$ (see \eqref{defSeta}), and so $\vec{\vartheta}\in \mathcal{S}$. If, on the other hand, every component of $\vec{\vartheta}$ is positive, then
\begin{equation}\label{7-8-2}
\frac{1}{\vartheta_{1}}=\sum_{j=1}^{\infty}\frac{1}{2^{k_{j}}}.
\end{equation}
If we take the partial sums
\[
\sum_{j=1}^{l}\frac{1}{2^{k_{j}}}=\frac{2^{k_{l}}+2^{k_{l}-k_{2}}+2^{k_{l}-k_{3}}+\cdots+1}{2^{k_{l}}}
\]
and define the integers $N_{l}=2^{k_{l}}+2^{k_{l}-k_{2}}+2^{k_{l}-k_{3}}+\cdots+1$, then by \eqref{7-8-2} we obtain 
\[
\lim_{l\rightarrow\infty}\frac{2^{k_{l}}}{N_{l}}=\vartheta_{1},
\]   
therefore 
\[
\lim_{l\rightarrow\infty}\frac{2^{k_{l}-k_{m}}}{N_{l}}=\frac{\vartheta_{1}}{2^{k_{m}}}=\vartheta_{m},\qquad m\geq 2. 
\]
This implies that $\vec{\vartheta}\in\mathcal{S}$.
\end{proof}

\begin{remark}
Proposition \ref{prop_characS} asserts that the vectors in $\mathcal{S}$ are precisely those that can be constructed by the following procedure. Start with a number $1/2\leq x\leq 1$, take a binary expansion of $1/x$ of the form
\[
\frac{1}{x}=\frac{1}{2^{k_{1}}}+\frac{1}{2^{k_{2}}}+\cdots, 
\]
where $0=k_{1}<k_{2}<\cdots$. If the expansion has $m$ terms, construct the vector
\[
\left(x,\frac{x}{2^{k_{2}}},\ldots,\frac{x}{2^{k_{m}}},0,0,\ldots,0,\ldots\right),
\] 
and if the expansion has infinitely many terms, construct the vector
\[
\left(x,\frac{x}{2^{k_{2}}},\ldots,\frac{x}{2^{k_{j}}},\ldots\right).
\]
\end{remark}

\begin{remark}\label{rmk_binexp}
We leave to the reader to check that a positive number $x>0$ has a non-unique binary expansion if and only if $x$ is an integer, or $x$ can be written in the form $q/2^{k}$, where $q$ is an odd positive integer and $k\geq 1$ is a positive integer. This is equivalent to saying that $x$ admits a binary expansion with finitely many terms. If a number $x>0$ has a non-unique binary expansion, then it has exactly two of them, one with finitely many terms, the other one with infinitely many terms. For example, the number $3/2$ has exactly two binary expansions, namely $3/2=1+1/2=1+\sum_{n=2}^{\infty}2^{-n}$. 
\end{remark}

\begin{proposition}\label{prop_twodiff}
Let $1/2\leq x\leq 1$. There is a unique $\vec{\vartheta}=(\vartheta_{1},\vartheta_{2},\ldots)\in\mathcal{S}$ with $\vartheta_{1}=x$ if and only if the number $1/x$ cannot be written in the form $q/2^{k}$, where $q$ is a  positive odd integer and $k\geq 1$ is a positive integer. 

If $1/x=q/2^{k}$, where $q$ and $k$ are positive integers and $q$ is odd, then there are exactly two different vectors $\vec{\vartheta}\in\mathcal{S}$ such that $\vartheta_{1}=x$. If $q=2^{n_{1}}+2^{n_{2}}+\cdots+2^{n_{m}}$ is the (finite) binary expansion of $q$, then these vectors are 
\begin{align}
\vec{\vartheta} & =\left(x,\frac{x}{2^{k_{2}}},\frac{x}{2^{k_{3}}},\ldots,\frac{x}{2^{k_{m}}},0, 0,\ldots,0,\ldots\right),\label{8-8-2}\\
\vec{\vartheta} & =\left(x,\frac{x}{2^{k_{2}}},\frac{x}{2^{k_{3}}},\ldots,\frac{x}{2^{k_{m-1}}},\frac{x}{2^{k_{m}+1}},\frac{x}{2^{k_{m}+2}},\ldots,\frac{x}{2^{k_{m}+l}},\ldots\right),\label{8-8-3}
\end{align}
(the first one with finitely many non-zero components, the second one with all components non-zero) where $k_{j}=n_{1}-n_{j}$ for each $2\leq j\leq m$.
\end{proposition}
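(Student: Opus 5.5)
The plan is to reduce the statement entirely to Proposition \ref{prop_characS} combined with Remark \ref{rmk_binexp}. By Proposition \ref{prop_characS}, the vectors $\vec{\vartheta}\in\mathcal{S}$ with $\vartheta_1=x$ are in bijection with the binary expansions of $1/x$ of the form \eqref{7-8-1}, that is, $1/x=\sum_j 2^{-k_j}$ with $0=k_1<k_2<\cdots$. The bijection sends such an expansion to $(x,x2^{-k_2},\dots)$, padded with zeros when the expansion is finite. The map is injective, because distinct exponent sequences $(k_j)$ produce distinct vectors: the positive entries recover the $k_j$ as $k_j=\log_2(x/\vartheta_j)$, and the number of positive entries recovers the length. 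So counting vectors amounts to counting admissible expansions.

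The key point to check is that the constraint $k_1=0$ does not exclude any expansion. Since $1\le 1/x\le 2$, I separate two cases.

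\emph{Case $1\le 1/x<2$.} Every binary expansion of $1/x$ has a leading term $2^{-k_1}$ with $k_1\ge 0$, because the value is below $2$. If $k_1\ge 1$, the expansion would sum to at most $\sum_{j\ge1}2^{-j}=1$, with equality only when $1/x=1$ and the expansion is $\sum_{j\ge1}2^{-j}$. I need to handle this subcase ($x=1$) separately. There $1/x=1=q/2^k$ would require $q=2^k$ with $k\ge1$, which is impossible for odd $q$. The number $1$ has two expansions, $1$ and $\sum_{j\ge1}2^{-j}$, but the second has $k_1=1\neq0$, so only one admissible expansion survives, which matches uniqueness. For $1<1/x<2$, every expansion has $k_1=0$.

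\emph{Case $1/x=2$ ($x=1/2$).} The expansions of $2$ are $2^{1}$ and $1+\sum_{j\ge1}2^{-j}$. Only the latter has $k_1=0$, so the vector is unique. This agrees with the statement, since $2=q/2^k$ with $q$ odd and $k\ge1$ is impossible.

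It then remains to invoke Remark \ref{rmk_binexp}. For $1<1/x<2$, $1/x$ is not an integer, so it has two expansions exactly when $1/x=q/2^k$ with $q$ odd and $k\ge1$, and a unique expansion otherwise. Combining this with the two endpoint cases gives the first assertion.

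For the explicit vectors, let $1/x=q/2^k$ with $q=2^{n_1}+\cdots+2^{n_m}$. Then $1/x=\sum_j 2^{n_j-k}$. Since $1<1/x<2$, the leading exponent satisfies $n_1=k$, so the finite expansion is $\sum_{j}2^{-(n_1-n_j)}$ with $k_j=n_1-n_j$, which gives \eqref{8-8-2}. The infinite expansion comes from replacing the last term using $2^{-k_m}=\sum_{l\ge1}2^{-k_m-l}$, which gives \eqref{8-8-3}. Here $m\ge2$ because $q$ is odd and $q>2^k\ge2$, so $k_m>k_{m-1}$ and the exponents remain strictly increasing.

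The main obstacle is the bookkeeping at the endpoints $x=1$ and $x=1/2$ together with the normalization $k_1=0$. I expect to rely on Remark \ref{rmk_binexp} for the count of binary expansions and not reprove it.
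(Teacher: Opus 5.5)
Your proof is correct and takes the same route as the paper. Both arguments use Proposition~\ref{prop_characS} to reduce the count of vectors in $\mathcal{S}$ with $\vartheta_1=x$ to a count of binary expansions of $1/x$ with $k_1=0$, then apply Remark~\ref{rmk_binexp}, obtaining \eqref{8-8-2}--\eqref{8-8-3} from $n_1=k$ and $2^{-k_m}=\sum_{l\ge1}2^{-k_m-l}$. Your check that the condition $k_1=0$ discards exactly one expansion at $x=1$ and at $x=1/2$, and none when $1<1/x<2$, simply fills in what the paper calls ``obvious'' and ``clear.''
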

\begin{proof}
Suppose that $1/2\leq x\leq 1$ is such that $1/x$ cannot be written in the form $q/2^{k}$ as indicated. If $x=1$, then it is obvious that there is a unique $\vec{\vartheta}\in\mathcal{S}$ such that $\vartheta_{1}=1$. If $x=1/2$ (and so $1/x=2$), then it is clear that there is a unique $\vec{\vartheta}\in\mathcal{S}$ such that $\vartheta_{1}=1/2$, namely
\[
\vec{\vartheta}=(2^{-1}, 2^{-2},\ldots,2^{-n},\ldots).
\]
If $1/2<x<1$, then we know by Proposition \ref{prop_characS} that there exists $\vec{\vartheta}\in\mathcal{S}$ such that $\vartheta_{1}=x$, and any such $\vec{\vartheta}$ is of the form
\[
\vec{\vartheta}=\left(x,\frac{x}{2^{k_{2}}},\frac{x}{2^{k_{3}}},\ldots\right),
\]
where 
\begin{equation}\label{8-8-1}
\frac{1}{x}=1+\frac{1}{2^{k_{2}}}+\frac{1}{2^{k_{3}}}+\cdots.
\end{equation}
But by Remark \ref{rmk_binexp}, such binary expansion \eqref{8-8-1} is unique, and therefore there is a unique $\vec{\vartheta}\in\mathcal{S}$ such that $\vartheta_{1}=x$.

Now assume that $1/2\leq x\leq 1$ is such that $1/x=q/2^{k}$, where $q$ and $k$ are positive integers and $q$ is odd. Then $1<1/x<2$. If $q=2^{n_{1}}+2^{n_{2}}+\cdots+2^{n_{m}}$ is the finite binary expansion of $q$, then $n_{m}=0$ and we must have $n_{1}=k$. Thus
\[
\frac{1}{x}=1+\frac{1}{2^{k_{2}}}+\frac{1}{2^{k_{3}}}+\cdots+\frac{1}{2^{k_{m}}},
\]
where $k_{j}=n_{1}-n_{j}$ for $2\leq j\leq m$. The only other binary expansion of $1/x$ is
\[
\frac{1}{x}=1+\frac{1}{2^{k_{2}}}+\cdots+\frac{1}{2^{k_{m-1}}}+\sum_{l=1}^{\infty}\frac{1}{2^{k_{m}+l}},
\]   
therefore, by Proposition \ref{prop_characS} there are exactly two vectors $\vec{\vartheta}\in\mathcal{S}$ such that $\vartheta_{1}=x$, which are given by \eqref{8-8-2} and \eqref{8-8-3}.
\end{proof}

\begin{proposition}\label{prop_uniqvalue}
Let $1/2\leq x\leq 1$ be a number for which there are two different vectors $\vec{\vartheta}=(\vartheta_{1}, \vartheta_{2},\ldots)\in\mathcal{S}$ and $\vec{\theta}=(\theta_{1},\theta_{2},\ldots)\in\mathcal{S}$ such that $x=\vartheta_{1}=\theta_{1}$. Then we have $K(\vec{\vartheta})=K(\vec{\theta})$, $R(\vec{\vartheta})=R(\vec{\theta})$, and $H(\vec{\vartheta};s)=H(\vec{\theta};s)$ for any $s>-1$. 
\end{proposition}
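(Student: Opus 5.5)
By Proposition \ref{prop_twodiff}, the hypothesis forces $1/x=q/2^{k}$ with $q$ odd. Hence the two vectors are exactly \eqref{8-8-2} and \eqref{8-8-3}. They share the first $m-1$ components $\vartheta_j=x2^{-k_j}$, $1\leq j\leq m-1$. Write $a:=x2^{-k_m}$. Then \eqref{8-8-2} ends with the single term $a$ followed by zeros, while \eqref{8-8-3} replaces $a$ by the geometric tail $a2^{-1},a2^{-2},\ldots$, whose sum is also $a$.

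The plan is to compute each of $H$, $K$, $R$ for both vectors directly from Definition \ref{def:extensionofHKR} and check that the difference vanishes. We organize each series as $\sum_n f(\vartheta_n)+\sum_n g(\vartheta_n)\,T_n$, where $T_n=\sum_{j>n}\vartheta_j$. For $n\leq m-1$, the tails $T_n$ coincide for the two vectors, since each tail includes the block starting at index $m$, and both blocks sum to $a$. So all terms with $n\leq m-1$ agree, and only the contribution of the block from index $m$ onward must be compared. For \eqref{8-8-2} this block contributes $f(a)$ with zero tail. For \eqref{8-8-3} the $l$-th term is $a2^{-l}$, with tail $T=a2^{-l}$.

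\textbf{The function $H$.} We must check
\[
a^{s+1}=\sum_{l\geq1}(a2^{-l})^{s+1}+2(2^s-1)\sum_{l\geq1}(a2^{-l})^{s}\,a2^{-l},
\]
that is, $1=\bigl(1+2(2^s-1)\bigr)\sum_{l\geq1}2^{-l(s+1)}=(2^{s+1}-1)/(2^{s+1}-1)$. This identity holds for every $s>-1$.

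\textbf{The function $R$.} The entropy part gives
\[
-\sum_l a2^{-l}\log(a2^{-l})=-a\log a+a\log2\sum_l l2^{-l}=-a\log a+2a\log2.
\]
The index part changes from $-(2\log2)(m-1)a$ to $-(2\log2)\sum_{l\geq1}(m+l-2)a2^{-l}=-(2\log2)a(m-2+2)=-(2\log2)ma$. The extra $-2a\log2$ cancels the $+2a\log 2$, so $R$ agrees.

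\textbf{The function $K$.} The constant $2\log2$ is common to both vectors. We need
\[
a^2\log(a/4)=\sum_l b_l^2\log(b_l/4)+2\sum_l b_l^2\log b_l,\qquad b_l=a2^{-l},
\]
using $T=b_l$. The right side is $\sum_l b_l^2\bigl(3\log b_l-\log4\bigr)=a^2\sum_l4^{-l}\bigl(3\log a-3l\log2-2\log2\bigr)$. With $\sum_l 4^{-l}=1/3$ and $\sum_l l4^{-l}=4/9$, this equals $a^2\bigl(\log a-\tfrac43\log2-\tfrac23\log2\bigr)=a^2(\log a-2\log2)=a^2\log(a/4)$, so $K$ agrees.

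The only real obstacle is bookkeeping. First, the tail sums for indices $n<m$ must really be identical, which follows from $\sum_{l\geq1}a2^{-l}=a$. Second, in $R$ the index shift must be handled correctly, since position $m-1+l$ carries the weight $(m+l-2)$. The needed sums ($\sum l2^{-l}=2$, $\sum 4^{-l}=1/3$, $\sum l4^{-l}=4/9$) are elementary. Absolute convergence, justified by Proposition \ref{prop_contHKR}, permits the rearrangements.
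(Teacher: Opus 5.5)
Your proposal is correct and follows essentially the same route as the paper. Both arguments use Proposition \ref{prop_twodiff} to reduce to the pair \eqref{8-8-2}/\eqref{8-8-3}, observe that the common head and equal tail sums make all terms with index below $m$ agree, and then verify the block identities for $H$, $R$, and $K$ with the same elementary geometric sums (the paper indexes from $n=m$ where you reindex by $l$).
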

\begin{proof}
We know by Proposition \ref{prop_twodiff} that one of the vectors has finitely many non-zero components, say $m$ of them:
\[
\vec{\theta}=(\theta_{1},\ldots,\theta_{m},0, 0,\ldots),
\]
where $m\geq 2$, and the other vector $\vec{\vartheta}=(\vartheta_{1},\vartheta_{2},\ldots)$ satisfies the conditions 
\begin{align}
\vartheta_{j} & =\theta_{j}, \qquad 1\leq j\leq m-1,\label{10-8-6} \\
\vartheta_{j} & =\frac{\theta_{m}}{2^{j-m+1}},\qquad j\geq m. \label{10-8-3}
\end{align}
Note that
\begin{equation}\label{10-8-2}
\sum_{j=m}^{\infty}\vartheta_{j}=\theta_{m}.
\end{equation}
If we take the function
\[
f(x)=\sum_{n=m}^{\infty}x^{n}=\frac{x^{m}}{1-x},\qquad |x|<1,
\]
then
\[
x f'(x)=\sum_{n=m}^{\infty}nx^{n}=\frac{(m(1-x)+x)\,x^{m}}{(1-x)^{2}}.
\]
Evaluating at $x=1/2, 1/4$, we obtain
\begin{equation}\label{10-8-1}
\sum_{n=m}^{\infty}\frac{n}{2^{n}}=\frac{m+1}{2^{m-1}},\qquad\qquad \sum_{n=m}^{\infty}\frac{n}{4^{n}}=\frac{3m+1}{9}\frac{1}{4^{m-1}}.
\end{equation}
We have
\begin{align}
H(\vec{\vartheta};s) & =\sum_{n=1}^{\infty}\vartheta_{n}^{s+1}+2(2^{s}-1)\sum_{n=1}^{\infty}\sum_{j=n+1}^{\infty}\vartheta_{n}^{s}\,\vartheta_{j}\notag\\
& =\sum_{n=1}^{m-1}\theta_{n}^{s+1}+\sum_{n=m}^{\infty}\vartheta_{n}^{s+1}+2(2^{s}-1)\left(\sum_{n=1}^{m-1}\sum_{j=n+1}^{\infty}\theta_{n}^{s}\,\vartheta_{j}+\sum_{n=m}^{\infty}\sum_{j=n+1}^{\infty}\vartheta_{n}^{s}\,\vartheta_{j}\right).\label{10-8-4}
\end{align}
It follows from \eqref{10-8-3} that
\[
\sum_{n=m}^{\infty}\vartheta_{n}^{s+1}=\frac{\theta_{m}^{s+1}}{2^{s+1}-1}.
\]
From \eqref{10-8-6} and \eqref{10-8-2} we deduce that
\[
\sum_{n=1}^{m-1}\sum_{j=n+1}^{\infty}\theta_{n}^{s}\,\vartheta_{j}=\sum_{n=1}^{m-1}\sum_{j=n+1}^{m}\theta_{n}^{s}\,\theta_{j}.
\]
Using \eqref{10-8-3} we obtain
\[
\sum_{n=m}^{\infty}\sum_{j=n+1}^{\infty}\vartheta_{n}^{s}\,\vartheta_{j}=\frac{\theta_{m}^{s+1}}{2^{s+1}-1}.
\]
So applying the previous three identities, and continuing the computation in \eqref{10-8-4}, we obtain
\begin{align*}
H(\vec{\vartheta};s) & =\sum_{n=1}^{m-1}\theta_{n}^{s+1}+\frac{\theta_{m}^{s+1}}{2^{s+1}-1}+2(2^{s}-1)\sum_{n=1}^{m-1}\sum_{j=n+1}^{m}\theta_{n}^{s}\,\theta_{j}+\frac{2(2^{s}-1)\,\theta_{m}^{s+1}}{2^{s+1}-1}\\
& =\sum_{n=1}^{m}\theta_{n}^{s+1}+2(2^{s}-1)\sum_{n=1}^{m-1}\sum_{j=n+1}^{m}\theta_{n}^{s}\,\theta_{j}\\
& =H(\vec{\theta};s).
\end{align*}

We have
\begin{equation}\label{10-8-7}
R(\vec{\vartheta})=-\sum_{n=1}^{m-1}(\log(4)(n-1)+\log\theta_{n})\,\theta_{n}-\sum_{n=m}^{\infty}(\log(4)(n-1)+\log\vartheta_{n})\,\vartheta_{n}.
\end{equation}
Applying \eqref{10-8-3} and the first identity in \eqref{10-8-1}, we obtain
\begin{align}
\sum_{n=m}^{\infty}(\log(4)(n-1)+\log\vartheta_{n})\,\vartheta_{n} & =\sum_{n=m}^{\infty}\log(4^{n-1}\,\vartheta_{n})\,\vartheta_{n}=\sum_{n=m}^{\infty}\log(\theta_{m}\,2^{n+m-3})\frac{\theta_{m}}{2^{n-m+1}}\notag\\
& =\theta_{m}\,2^{m-1}\left(\sum_{n=m}^{\infty}\log(\theta_{m}\,2^{m-3})\frac{1}{2^{n}}+\log(2)\sum_{n=m}^{\infty}\frac{n}{2^{n}}\right)\notag\\
& =\theta_{m}\,2^{m-1}\left(\log(\theta_{m}\,2^{m-3})\frac{1}{2^{m-1}}+\log(2)\frac{m+1}{2^{m-1}}\right)\notag\\
& =\theta_{m}\log(\theta_{m})+\theta_{m}\log(4)(m-1).\label{10-8-8}
\end{align}
Therefore, from \eqref{10-8-7} and \eqref{10-8-8} it follows that $R(\vec{\vartheta})=R(\vec{\theta})$.

We have
\begin{align}
K(\vec{\vartheta})  ={} &\log(4)+\sum_{n=1}^{m-1}\theta_{n}^{2}\log(\theta_{n}/4)+\sum_{n=m}^{\infty}\vartheta_{n}^{2}\log(\vartheta_{n}/4)+2\sum_{n=1}^{m-1}\sum_{j=n+1}^{\infty}\theta_{n}\log(\theta_{n})\,\vartheta_{j}\notag\\& +2\sum_{n=m}^{\infty}\sum_{j=n+1}^{\infty}\vartheta_{n}\log(\vartheta_{n})\,\vartheta_{j}.\label{10-8-12}
\end{align}
Observe that
\begin{equation}\label{10-8-11}
\sum_{n=1}^{m-1}\sum_{j=n+1}^{\infty}\theta_{n}\log(\theta_{n})\,\vartheta_{j}=\sum_{n=1}^{m-1}\sum_{j=n+1}^{m}\theta_{n}\log(\theta_{n})\,\theta_{j}.
\end{equation}
Since $\sum_{j=n+1}^{\infty}\vartheta_{j}=\vartheta_{n}$ for each $n\geq m$, we can write
\begin{equation}\label{10-8-9}
\sum_{n=m}^{\infty}\vartheta_{n}^{2}\log(\vartheta_{n}/4)+2\sum_{n=m}^{\infty}\sum_{j=n+1}^{\infty}\vartheta_{n}\log(\vartheta_{n})\,\vartheta_{j}=\sum_{n=m}^{\infty}(3\,\vartheta_{n}^{2}\log(\vartheta_{n})-\log(4)\,\vartheta_{n}^{2}).
\end{equation}
From \eqref{10-8-3} and the second identity in \eqref{10-8-1} we deduce that
\[
\sum_{n=m}^{\infty}\vartheta_{n}^{2}=\frac{\theta_{m}^{2}}{3}
\]
and
\begin{align*}
\sum_{n=m}^{\infty}\vartheta_{n}^{2}\log(\vartheta_{n}) & =\theta_{m}^{2}\,4^{m-1}\sum_{n=m}^{\infty}\frac{1}{4^{n}}(\log(\theta_{m}\,2^{m-1})-n\log 2)\\
& =\theta_{m}^{2}\,4^{m-1}\left(\frac{1}{3}\frac{\log(\theta_{m}\,2^{m-1})}{4^{m-1}}-\frac{\log(2)}{4^{m-1}}\frac{3m+1}{9}\right)\\
& =\frac{\theta_{m}^{2}}{3}\,(\log(\theta_{m})-\frac{4}{3}\log 2).
\end{align*}
Therefore, continuing the computation in \eqref{10-8-9} we obtain
\begin{align}
\sum_{n=m}^{\infty}\vartheta_{n}^{2}\log(\vartheta_{n}/4)+2\sum_{n=m}^{\infty}\sum_{j=n+1}^{\infty}\vartheta_{n}\log(\vartheta_{n})\,\vartheta_{j} & =\theta_{m}^{2}\log(\theta_{m})-\frac{4\,\theta_{m}^{2}\log 2}{3}-\frac{\theta_{m}^{2}\log 4}{3}\notag\\
& =\theta_{m}^{2}\log(\theta_{m}/4).\label{10-8-10}
\end{align}
From \eqref{10-8-12}, \eqref{10-8-11}, and \eqref{10-8-10} we finally conclude that $K(\vec{\vartheta})=K(\vec{\theta})$. 
\end{proof}

\section{Convexity of the function $H(\vec{\vartheta};s)$}\label{Convexity-H}

In this section we prove that $H(\vec{\vartheta};s)$ is a convex function of $s$ for any $\vec{\vartheta}\in\mathcal{S}$. Let 
\[
\vec{\vartheta}=(\theta_{1},\theta_{2},\theta_{3}, \ldots,\theta_{n},\ldots)\in\mathcal{S}.
\]
Part of the difficulty lies in interpreting correctly the expression \eqref{19-7-1} that defines the function $H$. Let us introduce the notation 
\begin{equation}\label{def:bk}
b_k:=\sum_{j=k+1}^{\infty}\theta_j,\qquad k\geq 1.
\end{equation}
To some extent, it seems that we should write (and think of) \eqref{19-7-1}  as 
\begin{align}\label{Hdef1}
H(\vec{\vartheta};s)=\sum_{k=1}^{\infty}(2\theta_k)^s(2b_k)+\sum_{k=1}^{\infty}\theta_k^s\l(\theta_k-2b_k\r)
\end{align}
because the sum $\sum \l((2\theta_k)^s(2b_k)+\theta_k^s\l(\theta_k-2b_k\r)\r)$ telescopes when taken over geometric sections (of ratio $1/2$) of the vector $\vec{\vartheta}=(\theta_1,\ldots,\theta_n,\ldots)$.

To understand this better, we first establish a lemma.
 \begin{lemma}\label{lemma2} 
Suppose that for some integers $m, r\geq 1$ we have
	\begin{align*}
		\l(\theta_m,\theta_{m+1},\ldots, \theta_{m+r-1}\r)
		={}&\l(\theta_{m}, 2^{-1}\theta_{m}, 2^{-2}\theta_{m},\ldots, 2^{-r+1}\theta_{m}\r).
	\end{align*}
	That is, 
	\begin{align}\label{rel0}
		\theta_k={} & 2^{m-k}\theta_{m},\qquad m\leq k\leq m+r-1.
	\end{align}
	Then, the following relations hold:	
	\begin{align}\label{rel1}
		\theta_{k-1}={} &2\theta_k, \qquad m+1\leq k\leq m+r-1,
	\end{align}
	\begin{align}\label{rel2}
		b_k={} &\theta_k+b_{m+r-1}-\theta_{m+r-1},\qquad m\leq k\leq m+r-1,
	\end{align}
	\begin{align}\label{rel4}
		\theta_k-2b_k=-(b_k+b_{m+r-1}-\theta_{m+r-1}), \qquad m\leq k\leq m+r-1,
	\end{align}
	\begin{align}\label{rel3}
		2b_k=b_{k-1}+b_{m+r-1}-\theta_{m+r-1}, \qquad m+1\leq k\leq m+r-1 .
	\end{align}
	\end{lemma}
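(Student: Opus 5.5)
The plan is to verify the four relations directly from the hypothesis \eqref{rel0} and the definition \eqref{def:bk} of $b_k$, which gives the recursion $b_{k-1}=\theta_k+b_k$. No earlier result beyond these definitions is needed. Relation \eqref{rel1} is immediate: for $m+1\leq k\leq m+r-1$ both $k$ and $k-1$ lie in $[m,m+r-1]$, so \eqref{rel0} gives $\theta_{k-1}=2^{m-k+1}\theta_m=2\theta_k$.

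For \eqref{rel2} I will split $b_k$ at the index $m+r-1$. For $m\leq k\leq m+r-1$,
\[
b_k=\sum_{j=k+1}^{m+r-1}\theta_j+b_{m+r-1},
\]
where the finite sum is empty when $k=m+r-1$. By \eqref{rel0}, $\sum_{j=k+1}^{m+r-1}\theta_j=\theta_m\sum_{j=k+1}^{m+r-1}2^{m-j}$, and this geometric sum equals $\theta_m(2^{m-k}-2^{m-(m+r-1)})=\theta_k-\theta_{m+r-1}$. Hence $b_k=\theta_k+b_{m+r-1}-\theta_{m+r-1}$. The case $k=m+r-1$ is consistent with this, since there the formula reads $b_{m+r-1}=b_{m+r-1}$.

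Relation \eqref{rel4} then follows from \eqref{rel2} by pure algebra. Writing $c:=b_{m+r-1}-\theta_{m+r-1}$, we have $\theta_k=b_k-c$, so $\theta_k-2b_k=-b_k-c$, which is exactly \eqref{rel4}.

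For \eqref{rel3}, let $m+1\leq k\leq m+r-1$. Apply \eqref{rel2} at index $k-1$, which also lies in the range, and use \eqref{rel1}:
\[
b_{k-1}=\theta_{k-1}+c=2\theta_k+c.
\]
Also $2b_k=2\theta_k+2c$ by \eqref{rel2}. Subtracting gives $2b_k-b_{k-1}=c$, which is \eqref{rel3}.

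Everything here is routine. The only point needing care is the bookkeeping of index ranges: one must check that $k-1$ stays in $[m,m+r-1]$ where \eqref{rel2} and \eqref{rel1} are applied, and treat the boundary case $r=1$ or $k=m+r-1$, where the finite geometric sum is empty.
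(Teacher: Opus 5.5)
Your proposal is correct and matches the paper's proof in substance: \eqref{rel1} is read off from \eqref{rel0}, \eqref{rel2} comes from splitting $b_k$ at index $m+r-1$ and summing the geometric block, and \eqref{rel4} is pure algebra from \eqref{rel2}. The only cosmetic difference is in \eqref{rel3}, where you combine \eqref{rel2} at index $k-1$ with \eqref{rel1}, while the paper doubles the finite sum and shifts its index directly; both arguments are equally short and valid.
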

	\begin{proof}	
		Relation \eqref{rel1} is an obvious consequence of \eqref{rel0}. To obtain \eqref{rel2}, we write 
		\[
			b_k=\sum_{j=k+1}^{m+r-1}\theta_j+b_{m+r-1}=\frac{\theta_{k+1}-2^{-1}\theta_{m+r-1}}{2^{-1}}+b_{m+r-1}=\theta_{k}-\theta_{m+r-1}+b_{m+r-1}.		
			\]
		Relation \eqref{rel4} follows directly from \eqref{rel2}, and \eqref{rel3} is obtained as follows:
		\[
		2b_k=\sum_{j=k+1}^{m+r-1}2\theta_j+2b_{m+r-1}=\sum_{j=k}^{m+r-2}\theta_j+2b_{m+r-1}=b_{k-1}+b_{m+r-1}-\theta_{m+r-1}.
		\] 
	\end{proof}
	
	Suppose now that the section $\l(\theta_m,\theta_{m+1},\ldots, \theta_{m+r-1}\r)$ of the vector $\vec{\vartheta}$ is as in Lemma \ref{lemma2} above. Then using the relations \eqref{rel1}--\eqref{rel3} we find
	\begin{align}
&	\sum_{k=m}^{m+r-1}(2\theta_k)^s(2b_k)+\sum_{k=m}^{m+r-1}\theta_k^s\l(\theta_k-2b_k\r)\nonumber\\
{} & =(2\theta_m)^s(2b_m)+	\sum_{k=m+1 }^{m+r-1}(2\theta_k)^s(2b_k)+\sum_{k=m}^{m+r-1}\theta_k^s\l(\theta_k-2b_k\r)\nonumber\\
{} & =(2\theta_m)^s(2b_m)+	\sum_{k=m+1 }^{m+r-1}(\theta_{k-1})^s(b_{k-1}+b_{m+r-1}-\theta_{m+r-1})-\sum_{k=m}^{m+r-1}\theta_k^s(b_k+b_{m+r-1}-\theta_{m+r-1})\nonumber\\
{} & =(2\theta_m)^s(2b_m)+\theta_{m+r-1}^s(\theta_{m+r-1}-2b_{m+r-1}).	\label{telescopingproperty}
\end{align}
This is the telescoping property we were previously referring to. If $(\theta_{m},\theta_{m+1},\ldots)$ is an infinite section of the vector $\vec{\vartheta}$ such that $\theta_{k+1}=\theta_{k}/2$ for all $k\geq m$, then taking $r\rightarrow\infty$ in \eqref{telescopingproperty} we obtain
\begin{equation}\label{telescopingproperty2}
\sum_{k=m}^{\infty}(2\theta_k)^s(2b_k)+\sum_{k=m}^{\infty}\theta_k^s\l(\theta_k-2b_k\r)=(2\theta_{m})^{s+1}.
\end{equation}
The condition guaranteeing that $\theta_{m+r-1}-2b_{m+r-1}\geq 0$ is given in the following lemma.

\begin{lemma}\label{lemma1} Let $\vec{\vartheta}=(\theta_{1},\theta_{2},\theta_{3},\ldots)\in\mathcal{S}$. If every component of $\vec{\vartheta}$ is different from zero, then for any $k\geq 1$ we have $\theta_{k}\geq 2 b_{k}$ if and only if $\theta_{k}>2\theta_{k+1}$. If $\vec{\vartheta}$ has finitely many non-zero components and $p=\max\{k\geq 1: \theta_{k}>0\}$, then for $1\leq k\leq p$ we have $\theta_{k}\geq 2 b_{k}$ if and only if $k\geq p-1$ or $\theta_{k}>2\theta_{k+1}$.
\end{lemma}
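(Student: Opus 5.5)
Throughout I use the description of $\vec{\vartheta}$ from Proposition \ref{prop_characS}: $\theta_j=\theta_1 2^{-k_j}$ for the positive components, with $0=k_1<k_2<\cdots$ strictly increasing. In particular $\theta_{j+1}\le \theta_j/2$ for every $j$. The plan is to write $\theta_k-2b_k = \theta_k-2\theta_{k+1}-2b_{k+1}$ and compare $b_{k+1}$ with the maximal possible tail below $\theta_{k+1}$.

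\emph{Infinite case.} Suppose every component is nonzero.
\begin{itemize}
\item If $\theta_{k}=2\theta_{k+1}$, then $\theta_k-2b_k=-2b_{k+1}<0$, since $b_{k+1}>0$. So $\theta_k\ge 2b_k$ forces $\theta_k>2\theta_{k+1}$.
\item Conversely, if $\theta_k>2\theta_{k+1}$, then $\theta_{k+1}\le\theta_k/4$ because the ratios are powers of $2$. By \eqref{varthetaprop5}, $b_{k+1}\le\theta_{k+1}$. Hence
\[
2b_k=2\theta_{k+1}+2b_{k+1}\le 4\theta_{k+1}\le\theta_k .
\]
\end{itemize}
This gives the equivalence.

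\emph{Finite case.} Let $p$ be the last index with $\theta_p>0$.
\begin{itemize}
\item If $k=p$, then $b_p=0$, so $\theta_p\ge 2b_p$ holds trivially.
\item If $k=p-1$, then $b_{p-1}=\theta_p\le\theta_{p-1}/2$, so again $\theta_{p-1}\ge 2b_{p-1}$.
\item If $k\le p-2$, then $b_{k+1}\ge\theta_p>0$. The argument from the infinite case then applies verbatim: $\theta_k=2\theta_{k+1}$ gives $\theta_k-2b_k=-2b_{k+1}<0$, while $\theta_k>2\theta_{k+1}$ gives $2b_k\le 4\theta_{k+1}\le\theta_k$.
\end{itemize}

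The only delicate point is the strictness $b_{k+1}>0$ in the case $\theta_k=2\theta_{k+1}$. It holds exactly when $k+1$ is not the last positive index, and that is precisely what distinguishes the exceptional indices $k\ge p-1$.
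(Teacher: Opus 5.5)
Your proof is correct and follows the paper's argument. When $\theta_k=2\theta_{k+1}$, the inequality $\theta_k\ge 2b_k$ fails because the tail beyond $\theta_{k+1}$ is strictly positive, which is exactly what breaks down for $k\ge p-1$; when $\theta_k>2\theta_{k+1}$, the power-of-two ratios give $2b_k\le 4\theta_{k+1}\le\theta_k$ (you bound the tail by \eqref{varthetaprop5}, where the paper sums a geometric series, which is only a cosmetic difference).
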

\begin{proof}
Suppose first that every component of $\vec{\vartheta}$ is different from zero. Assume that $\theta_{k}\leq 2\theta_{k+1}$. Then $$2 b_{k}=2\sum_{j=k+1}^{\infty}\theta_{j}>2\theta_{k+1}\geq \theta_{k}$$ and therefore $2b_{k}>\theta_{k}$. Suppose now that $\theta_{k}>2\theta_{k+1}$. By Proposition \ref{prop_19-7-4}, we have $\theta_{k+1}=2^{-r}\theta_{k}$ for some integer $r\geq 1$, which implies that $\theta_{k+1}\leq \theta_{k}/4$. Hence, again applying Proposition \ref{prop_19-7-4} we obtain
\[
2b_{k}=2\sum_{j=k+1}^{\infty}\theta_{j}\leq 2\sum_{j=0}^{\infty}\frac{\theta_{k+1}}{2^{j}}=4\theta_{k+1}\leq \theta_{k}
\]
and this concludes the proof of the first statement.

Suppose now that $\vec{\vartheta}$ has finitely many non-zero components and $p=\max\{k\geq 1: \theta_{k}>0\}$. If $k=p$, then trivially $\theta_{p}\geq 2 b_{p}=0$. If $p\geq 2$ and $k=p-1$, then $\theta_{p-1}\geq 2\theta_{p}=2 b_{p-1}$. If $\theta_{k}>2\theta_{k+1}$, then as before we have $\theta_{k}\geq 4\theta_{k+1}$ and 
\[
2 b_{k}=2\sum_{j=k+1}^{p}\theta_{j}\leq 2\sum_{j=0}^{p-k-1}\frac{\theta_{k+1}}{2^{j}}\leq 4\theta_{k+1}\leq \theta_{k}.
\]
Suppose now that $\theta_{k}\geq 2 b_{k}$ and $k\leq p-2$. If $\theta_{k}\leq 2\theta_{k+1}$, then $2 b_{k}=2\sum_{j=k+1}^{p}\theta_{j}>2\theta_{k+1}\geq \theta_{k}$, which is a contradiction.	
\end{proof}

In what follows, by a \emph{string of integers} we mean a finite or infinite set of consecutive integers. Given a vector $\vec{\vartheta}=(\theta_{1},\theta_{2},\ldots)\in\mathcal{S}$, we know by Proposition \ref{prop_characS} that $1/2\leq \theta_{1}\leq 1$, and for every $k$ such that $\theta_{k}>0$ we have $\theta_{k}=\theta_{1} 2^{-n_{k}}$, where $0=n_{1}<n_{2}<n_{3}<\cdots$ is an increasing sequence of non-negative integers such that
\begin{equation}\label{exptheta1}
\frac{1}{\theta_{1}}=1+\frac{1}{2^{n_{2}}}+\frac{1}{2^{n_{3}}}+\cdots.
\end{equation}
If the expansion \eqref{exptheta1} is infinite, then
\begin{equation}\label{expinfinite}
\vec{\vartheta}=\left(\theta_{1},\frac{\theta_{1}}{2^{n_{2}}},\frac{\theta_{1}}{2^{n_{3}}},\ldots,\frac{\theta_{1}}{2^{n_{k}}},\ldots\right),
\end{equation}
and if the expansion \eqref{exptheta1} is finite and has $p$ terms, then 
\begin{equation}\label{expfinite}
\vec{\vartheta}=\left(\theta_{1},\frac{\theta_{1}}{2^{n_{2}}},\frac{\theta_{1}}{2^{n_{3}}},\ldots,\frac{\theta_{1}}{2^{n_{p}}},0,0,\ldots\right).
\end{equation}

Suppose that \eqref{expfinite} holds. Then there is a unique partition $\caliP(\vec{\vartheta})=\{J_1,J_2,\ldots, J_q\}$ of  $\{1,2,\ldots,p\}$ into non-empty sets $J_{\ell}$ satisfying the following conditions:
\begin{enumerate}
	\item[$c1)$] For each $1\leq \ell\leq q$, both $J_\ell\subset\{1,\ldots,p\}$ and the set $\{n_k:k\in J_\ell\}\subset\mathbb{N}\cup\{0\}$ are finite strings of non-negative integers. That is to say,
	\[
	J_\ell=\{m_\ell,m_\ell+1,\ldots,m_\ell+r_\ell-1\}
	\]
	and   
	\[
	n_{m_\ell+r_\ell-1}=n_{m_\ell}+r_\ell-1.
	\]  
	
	\item[$c2)$] $m_\ell+r_\ell=m_{\ell+1}$ for all $1\leq \ell< q$.
	\item[$c3)$] For each $1\leq \ell< q$, we have 
	\begin{align}\label{property3}
		2^{-n_{m_\ell+r_\ell-1}} \geq 2^{-n_{m_{\ell+1}}+2}.
	\end{align}
\end{enumerate}

As an example, if $\vec{\vartheta}=\eta(N)$ for the number 
	\[
N=2^{13}+2^{12}+2^{10}+2^8+2^7+2^6+2^3+2+1,
	\]
	then $\theta_{1}=2^{13}/N$, 
	\[
	\vec{\vartheta}=\theta_{1}\left(1,2^{-1},2^{-3},2^{-5},2^{-6},2^{-7},2^{-10},2^{-12},2^{-13},0,0,\ldots\right),
	\]
and we have $\caliP(\vec{\vartheta})=\{J_1,J_2,J_3,J_4,J_5\}$ with $J_1=\{1,2\}$, $J_2=\{3\}$, $J_3=\{4,5,6\}$, $J_4=\{7\}$, $J_5=\{8,9\}$.

Then, using \eqref{telescopingproperty} we can write 
\begin{align}
	H(\vec{\vartheta};s)={}&\sum_{\ell=1}^q\l(\sum_{k\in J_\ell}(2\theta_k)^s(2b_k)+\sum_{k\in J_\ell}\theta_k^s\l(\theta_k-2b_k\r)\r)\notag\\
	={} &\sum_{\ell=1}^q\l((2\theta_{m_\ell})^s(2b_{m_\ell})+\theta_{m_\ell+r_\ell-1}^s(\theta_{m_\ell+r_\ell-1}-2b_{m_\ell+r_\ell-1})\r).\label{16-7-1}
\end{align}
Note  that by \eqref{property3}, we have that either $m_\ell+r_\ell-1=p$, or $\theta_{m_\ell+r_\ell-1}>2 \theta_{m_\ell+r_\ell}$, so that by Lemma \ref{lemma1}, it is the case that   
\begin{align}\label{ineq1}
	\theta_{m_\ell+r_\ell-1}-2b_{m_\ell+r_\ell-1}\geq 0,\qquad 1\leq \ell\leq q.	
\end{align}
We conclude that if $\vec{\vartheta}$ is of the form \eqref{expfinite}, then $H(\vec{\vartheta};s)$ is a finite sum of convex functions of $s$, and so it is a convex function. Moreover, if $\vec{\vartheta}\neq (1,0,0,\ldots,0,\ldots)$, then $p\geq 2$ and $\theta_{1}>1/2$, hence $b_{1}>0$ and the function $(2\theta_{1})^{s}(2 b_{1})$ is strictly convex. Since $m_{1}=1$, it follows from \eqref{16-7-1} that in this case  
$H(\vec{\vartheta};s)$ is a strictly convex function of $s$ on the whole real line $(-\infty,\infty)$. If $\vec{\vartheta}=(1,0,0,\ldots)$, then $H(\vec{\vartheta};s)\equiv 1$.

Now suppose that \eqref{expinfinite} holds, that is, every component of $\vec{\vartheta}$ is different from zero. Then there is a unique partition $\mathcal{P}(\vec{\vartheta})=\{J_{1},J_{2},\ldots\}$ of $\mathbb{N}$, where the number $q\in\mathbb{N}\cup\{\infty\}$ of sets $J_{\ell}$ in the partition is finite or infinite, satisfying the following properties:

\begin{enumerate}
\item For each $1\leq \ell<q$, $J_{\ell}$ and $\{n_{k}: k\in J_{\ell}\}$ are finite strings. Thus
\[
	J_\ell=\{m_\ell,m_\ell+1,\ldots,m_\ell+r_\ell-1\}
	\]
	and   
	\[
	n_{m_\ell+r_\ell-1}=n_{m_\ell}+r_\ell-1.
	\]
	
	\item If $q$ is finite, then $J_{q}=\{m_{q}, m_{q}+1,\ldots\}$ and $\{n_{k}: k\in J_{q}\}$ are infinite strings.
	
	\item The above conditions $c2)$ and $c3)$ hold. 
\end{enumerate}

Then, if $q$ is finite, applying \eqref{telescopingproperty} and \eqref{telescopingproperty2} we obtain in a similar way the identity
\begin{equation}\label{expfinH}
H(\vec{\vartheta};s)=\sum_{\ell=1}^{q-1}\l((2\theta_{m_\ell})^s(2b_{m_\ell})+\theta_{m_\ell+r_\ell-1}^s(\theta_{m_\ell+r_\ell-1}-2b_{m_\ell+r_\ell-1})\r)+(2\theta_{m_{q}})^{s+1},
\end{equation}
which shows that $H(\vec{\vartheta};s)$ is a convex function of $s$. If $\vec{\vartheta}=(2^{-1},2^{-2},2^{-3},\ldots)$, then $q=1$, $J_{1}=\mathbb{N}$, and therefore $H(\vec{\vartheta};s)\equiv 1$. If $\vec{\vartheta}\neq (2^{-1},2^{-2},2^{-3},\ldots)$ and $q\geq 2$ is finite, then $\theta_{1}>1/2$ and $(2\theta_{1})^{s}(2b_{1})$ is strictly convex, therefore \eqref{expfinH} implies in this case that $H(\vec{\vartheta};s)$ is strictly convex.

If $q=\infty$, then every set $J_{\ell}$ is finite and we have
\[
H(\vec{\vartheta};s)=\sum_{\ell=1}^{\infty}\l((2\theta_{m_\ell})^s(2b_{m_\ell})+\theta_{m_\ell+r_\ell-1}^s(\theta_{m_\ell+r_\ell-1}-2b_{m_\ell+r_\ell-1})\r).
\]
Since $H(\vec{\vartheta};s)$ is a convergent series of convex functions and $(2\theta_{1})^{s}(2b_{1})$ is strictly convex, it follows that $H(\vec{\vartheta};s)$ is strictly convex. 

Thus we have proven the following result:
\begin{theorem}\label{theo:convexH}
	The function $s\mapsto H(\vec{\vartheta};s)$ is strictly convex on the interval $(-1,\infty)$ for any $\vec{\vartheta}\in\mathcal{S}$ different from the vectors $(1,0,0,\ldots)$ and $(2^{-1},2^{-2},2^{-3},\ldots)$. If $\vec{\vartheta}$ is equal to one of these two vectors, then $H(\vec{\vartheta};s)\equiv 1$.
\end{theorem}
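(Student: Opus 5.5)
The plan is to regroup the series \eqref{19-7-1} according to the block partition $\caliP(\vec{\vartheta})$ constructed above, so that $H(\vec{\vartheta};s)$ becomes a (finite or convergent) sum of terms of the form $c\,a^{s}$ with $c\geq 0$ and $a>0$. Each such term is convex in $s$, and it is strictly convex when $c>0$ and $a\neq 1$.

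\textbf{Regrouping.} For a vector $\vec{\vartheta}=(\theta_1,\theta_2,\ldots)\in\mathcal{S}$, write $H$ in the split form \eqref{Hdef1}. This is legitimate because, by Proposition \ref{propertiesvarthetavectors}, both series $\sum(2\theta_k)^s(2b_k)$ and $\sum\theta_k^s(\theta_k-2b_k)$ are dominated by a constant multiple of $\sum 2^{-(k-1)(s+1)}$ when $s>-1$. Hence each converges absolutely, and we may regroup them freely over the sets $J_\ell$.

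On each block $J_\ell$, the components form a geometric run of ratio $1/2$, so Lemma \ref{lemma2} applies. The telescoping identity \eqref{telescopingproperty} then collapses the block to
\[
(2\theta_{m_\ell})^s(2b_{m_\ell})+\theta_{e_\ell}^s(\theta_{e_\ell}-2b_{e_\ell}),\qquad e_\ell=m_\ell+r_\ell-1 .
\]
If the last block is infinite, \eqref{telescopingproperty2} instead collapses it to $(2\theta_{m_q})^{s+1}$. This gives the representations \eqref{16-7-1} and \eqref{expfinH}, or the infinite analogue when $q=\infty$.

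\textbf{Sign of the coefficients.} This is the step requiring care. Condition $c3)$ says that either the block ends at the last nonzero index, or $\theta_{e_\ell}\geq 4\theta_{e_\ell+1}>2\theta_{e_\ell+1}$. Lemma \ref{lemma1} then gives $\theta_{e_\ell}-2b_{e_\ell}\geq 0$, as recorded in \eqref{ineq1}. Every term is therefore of the form (nonnegative constant)$\,\cdot a^s$, and hence convex on $(-1,\infty)$. A pointwise convergent series of convex functions is convex, which settles the case $q=\infty$.

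\textbf{Strictness.} Suppose $\vec{\vartheta}$ is neither $(1,0,\ldots)$ nor $(2^{-1},2^{-2},\ldots)$.
\begin{itemize}
\item If $\theta_1=1$, the vector is $(1,0,\ldots)$, so we must have $\theta_1<1$. Then $p\geq 2$, or infinitely many components are nonzero.
\item If $\theta_1=1/2$, the only expansion is $1/\theta_1=2=1+\sum_{k\ge1}2^{-k}$, which forces the second excluded vector. So $\theta_1\in(1/2,1)$.
\end{itemize}
Consequently $2\theta_1\neq 1$ and $b_1=1-\theta_1>0$. Since $m_1=1$, the first term $(2\theta_1)^s(2b_1)$ has a positive coefficient and base different from $1$, so it is strictly convex. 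This needs a little care when $q$ is finite and $J_1$ is the infinite block, i.e. $q=1$. In that case the whole vector is geometric with ratio $1/2$, and the sum condition forces $\theta_1=1/2$, which is excluded. So when the vector is not excluded, $q\ge 2$ or the block is finite, and the term $(2\theta_1)^s(2b_1)$ appears. A convex function plus a strictly convex one is strictly convex.

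\textbf{Excluded vectors.} Here we compute directly. For $(1,0,\ldots)$ we have $H=1^{s+1}=1$. For $(2^{-1},2^{-2},\ldots)$, \eqref{telescopingproperty2} with $m=1$ gives $H=(2\cdot\tfrac12)^{s+1}=1$.

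The main obstacle is justifying the regrouping and the sign condition \eqref{ineq1} at block ends, which rests on the doubling structure from Proposition \ref{prop_19-7-4} through Lemma \ref{lemma1}.
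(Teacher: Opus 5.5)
Your proposal is correct and follows essentially the same route as the paper. You split $H$ as in \eqref{Hdef1}, telescope over the blocks of $\caliP(\vec{\vartheta})$ via \eqref{telescopingproperty}--\eqref{telescopingproperty2}, obtain the sign condition \eqref{ineq1} from c3) and Lemma \ref{lemma1}, and get strictness from the first term $(2\theta_1)^s(2b_1)$; you also make explicit two details the paper leaves implicit, namely the absolute-convergence justification for regrouping and why $\theta_1=1/2$ forces the vector $(2^{-1},2^{-2},\ldots)$.
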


Since $H(\vec{\vartheta};0)=H(\vec{\vartheta};1)=1$, we obtain as a corollary that
\begin{align}\label{convexitybounds}\begin{split}
	H(\vec{\vartheta};s) & \leq 1,\qquad 0\leq s\leq 1,\quad \vec{\vartheta}\in\mathcal{S},\\
	H(\vec{\vartheta};s) & \geq  1,\qquad s\in(-1,\infty)\setminus[0,1],\quad \vec{\vartheta}\in\mathcal{S} .
	\end{split}
\end{align}

\section{Continuity of the functions $\mathcal{H}$, $\mathcal{K}$, $\mathcal{R}$, and density of the limit points}\label{continuity-functions-H-K-R}

\begin{definition}\label{deffuncHKRhat}
We define three functions as follows. For $1/2\leq x\leq 1$ and $s>-1$, let
\begin{align*}
\mathcal{H}(x,s) & :=H(\vec{\vartheta};s)\\
\mathcal{K}(x) & :=K(\vec{\vartheta})\\
\mathcal{R}(x) &:=R(\vec{\vartheta})
\end{align*}
where $\vec{\vartheta}=(\vartheta_{1},\vartheta_{2},\vartheta_{3},\ldots)$ is any vector in $\mathcal{S}$ such that $\vartheta_{1}=x$. In virtue of Proposition \ref{prop_uniqvalue}, these functions are well-defined.  
\end{definition}

For $1/2\leq x<1$, we write 
\begin{equation}\label{firstform}
\vec{\vartheta}_\infty(x):=\left(x,\frac{x}{2^{k_{2}}},\frac{x}{2^{k_{3}}},\ldots\right),
\end{equation}
where 
\[
\frac{1}{x}=\frac{1}{2^{k_{1}}}+\frac{1}{2^{k_{2}}}+\cdots+\frac{1}{2^{k_{j}}}+\cdots,\qquad 0=k_1<k_2<\cdots,
\]
is the \emph{infinite} binary expansion of $1/x$. If $1/2<x\leq 1$ and $1/x$ has a \emph{finite} binary expansion   	
\[
\frac{1}{x}=1+\frac{1}{2^{k_{2}}}+\cdots+\frac{1}{2^{k_{m}}},
\]
then we set 
\begin{align}\label{secondform}
\vec{\vartheta}(x):=\left(x,\frac{x}{2^{k_{2}}},\frac{x}{2^{k_{3}}},\ldots, \frac{x}{2^{k_{m}}},0,0,\ldots\right).
\end{align}
Note that for an $x\in (1/2,1)$ having two associated vectors \eqref{firstform} and \eqref{secondform}, we have  
\begin{align}\label{relation-two-vectors}
\vec{\vartheta}_\infty(x)=\left(x,\frac{x}{2^{k_{2}}},\frac{x}{2^{k_{3}}},\ldots, \frac{x}{2^{k_{m-1}}},\frac{x}{2^{k_{m}+1}},\frac{x}{2^{k_{m}+2}},\ldots,\frac{x}{2^{k_{m}+r}},\ldots \right),
\end{align}
and it is the assertion of Proposition \ref{prop_uniqvalue}	that 
\begin{align}\label{same-values-H}
H(\vec{\vartheta}_\infty(x);s)=H(\vec{\vartheta}(x);s),
\end{align}
with this equality holding true as well with the functions $K$ and $R$ in place of $H(\cdot;s)$.

The following two lemmas are elementary. Their proofs are left to the reader.

\begin{lemma}\label{lem_elem1}
	Let $1/2< x\leq 1$, and assume that $1/x$ has a finite binary expansion
	\begin{equation}\label{15-8-1}
		\frac{1}{x}=\frac{1}{2^{k_{1}}}+\frac{1}{2^{k_{2}}}+\cdots+\frac{1}{2^{k_{m}}},\qquad 0=k_{1}<k_{2}<\cdots<k_{m}.
	\end{equation}
\begin{enumerate}	
\item[(i)] If $1/2<y<x$ is so close to $x$ that $1/x<1/y<1/x+2^{-k_{m}-r}$, where $r\geq 0$ is an integer, then any binary expansion of $1/y$ is of the form
	\begin{equation}\label{14-8-1}
		\frac{1}{y}=1+\frac{1}{2^{k_{2}}}+\cdots+\frac{1}{2^{k_{m}}}+\frac{1}{2^{q_{m+1}}}+\frac{1}{2^{q_{m+2}}}+\cdots
	\end{equation}
	where $q_{m+1}>k_{m}+r$. As a consequence, we have
	\[
	\lim_{y\to x-}\|\vec{\vartheta}_\infty(y)-\vec{\vartheta}(x)\|_{\ell^\infty}=	\lim_{y\to x-}\|\vec{\vartheta}(y)-\vec{\vartheta}(x)\|_{\ell^\infty}=0,
	\]
where the second limit is taken along points $y$ such that $1/y$ has a finite binary expansion.
\item[(ii)] 	If $x<y<1$ is so close to $x$ that $1/x-2^{-k_{m}-r}<1/y<1/x$, where $r\geq 1$ is a positive integer, then any binary expansion of $1/y$ is of the form
\[
		\frac{1}{y}=1+\frac{1}{2^{k_{2}}}+\cdots+\frac{1}{2^{k_{m-1}}}+\frac{1}{2^{k_{m}+1}}+\frac{1}{2^{k_{m}+2}}+\cdots+\frac{1}{2^{k_{m}+r}}+\frac{1}{2^{q_{m+r}}}+\frac{1}{2^{q_{m+r+1}}}+\cdots
\]
	where $q_{m+r}>k_{m}+r$.  As a consequence, we have
	\[
	\lim_{y\to x+}\|\vec{\vartheta}_\infty(y)-\vec{\vartheta}_\infty(x)\|_{\ell^\infty}=	\lim_{y\to x+}\|\vec{\vartheta}(y)-\vec{\vartheta}_\infty(x)\|_{\ell^\infty}=0,
	\]
\end{enumerate}
where the second limit is taken along points $y$ such that $1/y$ has a finite binary expansion.	
		\end{lemma}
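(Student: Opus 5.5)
The plan is to read off the leading binary digits of $1/y$ from a single dyadic partial sum, and then to turn the agreement of digits into $\ell^\infty$ estimates.

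\emph{Digits from a partial sum.} Let $\frac1y=\sum_j 2^{-q_j}$ be any binary expansion, with $q_1=0<q_2<\cdots$. Fix an integer $J\geq 0$ and let $S_J$ be the sum of the terms with $q_j\le J$. Then $S_J$ is an integer multiple of $2^{-J}$. The remaining terms are distinct powers $2^{-q}$ with $q>J$, so
\[
S_J\le \tfrac1y\le S_J+2^{-J},
\]
with equality on the left only if the expansion stops at level $J$. Hence, whenever $1/y$ lies strictly between two consecutive multiples of $2^{-J}$, $S_J$ must be the lower one, and the expansion has at least one further term, necessarily with exponent $>J$. Moreover, a multiple of $2^{-J}$ in $[1,2)$ has exactly one representation as a sum of distinct powers $2^{-q}$ with $0\le q\le J$, namely its terminating binary expansion. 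So the exponents $q_j\le J$ are determined by $S_J$ alone.

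\emph{Part (i).} Take $J=k_m+r$. By hypothesis $1/x<1/y<1/x+2^{-J}$, and $1/x$ is a multiple of $2^{-J}$ since $k_m\le J$. Therefore $S_J=1/x$, and the exponents up to level $J$ are exactly $k_1,\dots,k_m$. There is at least one further term, and the next exponent satisfies $q_{m+1}>k_m+r$. This gives \eqref{14-8-1}.

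\emph{Part (ii).} Take again $J=k_m+r$. Now $1/x-2^{-J}<1/y<1/x$, so $S_J=1/x-2^{-k_m-r}$. Using
\[
2^{-k_m}-2^{-k_m-r}=\sum_{i=1}^r 2^{-k_m-i},
\]
the terminating expansion of $S_J$ has exponents $k_1,\dots,k_{m-1},k_m+1,\dots,k_m+r$. Again there is a further term with exponent $q_{m+r}>k_m+r$, which is the claimed form.

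\emph{Limits in (i).} Compare coordinates of $\vec{\vartheta}(x)$ with those of $\vec{\vartheta}_\infty(y)$ (or $\vec{\vartheta}(y)$) for $y$ as in (i).
\begin{itemize}
\item For the first $m$ coordinates, $|y2^{-k_j}-x2^{-k_j}|\le |y-x|$.
\item Beyond index $m$, $\vec{\vartheta}(x)$ vanishes, while the coordinates of the vector for $y$ are at most $y2^{-q_{m+1}}\le 2^{-k_m-r}$.
\end{itemize}
Hence the $\ell^\infty$ distance is at most $|y-x|+2^{-k_m-r}$. Given $r$, every $y$ sufficiently close to $x$ from the left satisfies the hypothesis of (i) with that $r$, so letting $y\to x^-$ and then $r\to\infty$ gives the limit $0$.

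\emph{Limits in (ii).} By \eqref{relation-two-vectors}, $\vec{\vartheta}_\infty(x)$ has coordinates $x2^{-k_1},\dots,x2^{-k_{m-1}}$ followed by $x2^{-k_m-i}$ for $i\ge1$.
\begin{itemize}
\item The vector for $y$ agrees with this pattern in its first $m+r-1$ coordinates, with $x$ replaced by $y$; these differ by at most $|y-x|$.
\item All later coordinates of both vectors are at most $2^{-k_m-r}$.
\end{itemize}
The same two-step limit ($y\to x^+$, then $r\to\infty$) gives the second pair of limits.

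The only delicate point is handling non-unique expansions of $1/y$. The strict inequalities in the hypotheses ensure that $1/y$ is never itself a multiple of $2^{-J}$, so the argument covers both the finite and the infinite expansion of $1/y$ at once.
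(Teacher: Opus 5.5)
Your proof is correct and complete. Trapping $1/y$ strictly between consecutive multiples of $2^{-J}$ with $J=k_m+r$ fixes $S_J$, and hence all exponents up to level $J$ for every expansion of $1/y$, by uniqueness of terminating expansions; the bound $|y-x|+2^{-k_m-r}$, followed by $y\to x^{\pm}$ and then $r\to\infty$, gives the limits (the fact $q_1=0$, which you use without stating, holds because $1/y\in(1,2)$). The paper gives no proof of this lemma, calling it elementary and leaving it to the reader, and your argument supplies the omitted verification.
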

\begin{lemma}\label{lem_elem2}
		Let $1/2\leq  x<1$ and assume  that $1/x$ does not admit a finite binary expansion of the form \eqref{15-8-1} (which is the case of $x=1/2$). Suppose that 
	\begin{equation}\label{15-8-2}
		\frac{1}{x}=\frac{1}{2^{k_{1}}}+\frac{1}{2^{k_{2}}}+\cdots+\frac{1}{2^{k_{j}}}+\cdots,\qquad 0=k_{1}<k_{2}<\cdots.
	\end{equation}
	Let $t_{m}:=\sum_{l=1}^{m}2^{-k_{l}}$, $m\geq 1$. If $y$ is so close to $x$ that either 
	\[
	 t_{m}<1/y<1/x\qquad or \qquad 1/x<1/y<t_{m}+2^{-k_{m}},
	\]
	then any expansion of $1/y$ is of the form \eqref{14-8-1} with $q_{m+1}>k_{m}$. As a consequence, we have
	\[
	\lim_{y\to x}\|\vec{\vartheta}_\infty(y)-\vec{\vartheta}_\infty(x)\|_{\ell^\infty}=	\lim_{y\to x}\|\vec{\vartheta}(y)-\vec{\vartheta}_\infty(x)\|_{\ell^\infty}=0,
	\]
	where the second limit is taken along points $y$ such that $1/y$ has a finite binary expansion.
\end{lemma}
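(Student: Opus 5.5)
The plan is to reduce everything to one statement about truncated binary expansions, and then read off the $\ell^\infty$ convergence from it.

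\emph{Step 1: $1/x$ lies strictly inside the dyadic interval $(t_m,t_m+2^{-k_m})$.} Since $1/x$ has no finite binary expansion, its expansion \eqref{15-8-2} is unique (Remark \ref{rmk_binexp}). It has infinitely many terms, so $k_j\to\infty$. Moreover, it is not eventually of the form $\sum_{l\ge 1}2^{-(k_m+l)}$, because such a tail would equal $2^{-k_m}$ and would produce a finite expansion. Hence for every $m\ge 1$,
\[
0<\frac1x-t_m=\sum_{l>m}2^{-k_l}<\sum_{l\ge 1}2^{-k_m-l}=2^{-k_m},
\]
so $t_m<1/x<t_m+2^{-k_m}$. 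This shows that the two one-sided hypotheses in the lemma together describe a punctured neighbourhood of $1/x$.

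\emph{Step 2: the main claim.} Suppose $t_m<1/y<t_m+2^{-k_m}$ with $1/y\neq 1/x$. Take any binary expansion $1/y=\sum_l 2^{-q_l}$ with $0=q_1<q_2<\cdots$ (note $1<1/y<2$). Let $s$ be the sum of the terms with exponent $\le k_m$. Then $s$ is a multiple of $2^{-k_m}$. The remaining terms have exponents $>k_m$, so they sum to a number in $[0,2^{-k_m}]$, giving $s\le 1/y\le s+2^{-k_m}$. The numbers $t_m$ and $t_m+2^{-k_m}$ are consecutive multiples of $2^{-k_m}$, and $1/y$ lies strictly between them. If $s\le t_m-2^{-k_m}$, then $1/y\le t_m$, a contradiction; if $s\ge t_m+2^{-k_m}$, then $1/y\ge t_m+2^{-k_m}$, again a contradiction. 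Hence $s=t_m$. Because finite binary representations of a dyadic rational by distinct powers are unique, the terms of the expansion with exponent $\le k_m$ are exactly $2^{-k_1},\dots,2^{-k_m}$. The remainder $1/y-t_m$ is positive, so at least one further term with exponent $q_{m+1}>k_m$ appears. This gives the form \eqref{14-8-1} with $q_{m+1}>k_m$.

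\emph{Step 3: $\ell^\infty$ convergence.} Fix $\varepsilon>0$ and choose $m$ with $2^{-k_m}<\varepsilon$, which is possible since $k_m\to\infty$. By Step 1 there is a $\delta>0$ such that every $y$ with $|y-x|<\delta$ and $y\ne x$ satisfies one of the two hypotheses. For such $y$, whether we use $\vec{\vartheta}_\infty(y)$ or $\vec{\vartheta}(y)$ (the latter when $1/y$ has a finite expansion), Step 2 shows the first $m$ components are $y2^{-k_l}$, $l\le m$. These differ from the corresponding components $x2^{-k_l}$ of $\vec{\vartheta}_\infty(x)$ by at most $|y-x|$. All later components of both vectors are bounded by $2^{-k_m-1}\le\varepsilon$, using $x,y\le 1$ and exponents $>k_m$. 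Therefore the $\ell^\infty$ distance is at most $\max(|y-x|,\varepsilon)$, and letting $y\to x$ and then $\varepsilon\to 0$ proves both limits.

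The only delicate point is Step 2, specifically ruling out expansions whose truncation equals a neighbouring dyadic. This is exactly where the strict inequalities of Step 1 are used, and hence where the hypothesis that $1/x$ has no finite expansion enters. This includes the case $x=1/2$, since $2$ has no expansion of the form $1+\cdots$ with finitely many terms.
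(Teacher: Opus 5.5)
The paper gives no proof of this lemma (it calls it elementary and leaves it to the reader). Your argument is the natural one, and its core is correct. In Step 2, you pin the truncation to $s=t_m$, because $1/y$ lies strictly between the consecutive multiples $t_m$ and $t_m+2^{-k_m}$ of $2^{-k_m}$, and you then use uniqueness of finite dyadic representations. Step 3 correctly turns this into the two $\ell^\infty$ limits. For $1/2<x<1$, Step 1 is also right: $1/x\in(1,2)$ is then not a dyadic rational, so its expansion is unique and not eventually all ones.

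The flaw is exactly at the endpoint you single out in your last sentence.
\begin{itemize}
\item For $x=1/2$ the expansion is $2=\sum_{j\ge 0}2^{-j}$, so $k_l=l-1$ and $1/x-t_m=2^{1-m}=2^{-k_m}$.
\item Thus the tail \emph{is} of the form $\sum_{l\ge1}2^{-(k_m+l)}$ for every $m$.
\item Collapsing it carries all the way up to $2=2^1$. That is a finite expansion, but not one of the form \eqref{15-8-1}, so the hypothesis does not exclude it.
\end{itemize}
Hence your Step 1 inequality $1/x<t_m+2^{-k_m}$ fails; in fact equality holds. The second hypothesis of the lemma is then vacuous, and the two hypotheses do not describe a punctured neighbourhood of $1/x$. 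The sentence in Step 3, ``there is a $\delta>0$ such that every $y$ with $0<|y-x|<\delta$ satisfies one of the two hypotheses,'' is false for $x=1/2$: any $y<1/2$ is a counterexample. The fact you cite, that $2$ has no finite expansion $1+\cdots$, is true but does not imply that the tail avoids the all-ones form.

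The repair is short.
\begin{itemize}
\item Step 2 only needs $t_m<1/y<t_m+2^{-k_m}$. Both hypotheses imply this via $t_m<1/x\le t_m+2^{-k_m}$, which holds for every admissible $x$, including $x=1/2$.
\item For the limits, note that $\vec{\vartheta}_\infty(y)$ and $\vec{\vartheta}(y)$ are only defined for $y\ge 1/2$. So when $x=1/2$ the limit is one-sided: $y\to 1/2^{+}$, i.e.\ $1/y\to 2^{-}$.
\item Since $t_m<2$ strictly, there is $\delta>0$ with $t_m<1/y<2=t_m+2^{-k_m}$ for all $y\in(1/2,1/2+\delta)$.
\end{itemize}
With this one-sided $\delta$, Step 3 goes through verbatim.
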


\begin{theorem}\label{theo:contHKRcal}
The three functions $\mathcal{H}(x,s)$, $s>-1$, $\mathcal{K}(x)$, $\mathcal{R}(x)$ introduced in Definition \ref{deffuncHKRhat} are continuous on the interval $1/2\leq x\leq 1$.
\end{theorem}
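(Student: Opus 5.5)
The plan is to factor each of the three functions as a composition: a map from $[1/2,1]$ into $\mathcal{S}$, followed by one of $H(\cdot;s)$, $K$, $R$, which are continuous on $\mathcal{S}$ with the $\ell^\infty$ metric by Proposition \ref{prop_contHKR}. The map $x\mapsto$ vector in $\mathcal{S}$ with first coordinate $x$ is not single-valued at dyadic-type points. However, by Proposition \ref{prop_uniqvalue} (equivalently \eqref{same-values-H}) both choices give the same value of $H$, $K$, $R$. So it suffices to show the following: for every $x\in[1/2,1]$ and every sequence $y_n\to x$ in $[1/2,1]$, there are vectors $\vec{\vartheta}^{(n)}\in\mathcal{S}$ with first coordinate $y_n$ such that, along subsequences, $\vec{\vartheta}^{(n)}$ converges in $\ell^\infty$ to some vector of $\mathcal{S}$ with first coordinate $x$. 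Continuity of $F\in\{H(\cdot;s),K,R\}$ on $\mathcal{S}$ then gives $F(\vec{\vartheta}^{(n)})\to F(\text{limit})=\mathcal{F}(x)$. This yields $\mathcal{F}(y_n)\to\mathcal{F}(x)$, since every subsequence has a further subsequence with the right limit.

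The concrete verification splits into cases according to $x$, using Lemmas \ref{lem_elem1} and \ref{lem_elem2}.

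\emph{Case 1: $1/x$ has a finite binary expansion with $1/2<x\leq 1$.} For $y\to x^-$, Lemma \ref{lem_elem1}(i) gives $\vec{\vartheta}_\infty(y)\to\vec{\vartheta}(x)$ and $\vec{\vartheta}(y)\to\vec{\vartheta}(x)$. Hence $\mathcal{F}(y)\to F(\vec{\vartheta}(x))=\mathcal{F}(x)$, whichever representative of $y$ is used. For $y\to x^+$ (only relevant when $x<1$), Lemma \ref{lem_elem1}(ii) gives convergence to $\vec{\vartheta}_\infty(x)$. Then $\mathcal{F}(y)\to F(\vec{\vartheta}_\infty(x))$, which equals $F(\vec{\vartheta}(x))=\mathcal{F}(x)$ by Proposition \ref{prop_uniqvalue}. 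This matching of the two one-sided limits is exactly where the well-definedness result is essential.

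\emph{Case 2: $1/x$ has no finite expansion (including $x=1/2$).} Lemma \ref{lem_elem2} gives two-sided convergence (one-sided at $x=1/2$) to $\vec{\vartheta}_\infty(x)$ for either representative of $y$.

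In every case, for each representative vector of $y$ we obtain convergence in $\ell^\infty$ to a vector representing $x$. By Proposition \ref{prop_contHKR} this proves continuity of $\mathcal{H}(\cdot,s)$ for each $s>-1$, and of $\mathcal{K}$ and $\mathcal{R}$.

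The main obstacle is the justification of the two elementary lemmas, which the paper leaves to the reader. I would verify them as follows. If $1/y$ lies within $2^{-k_m-r}$ of $1/x$ on the appropriate side, then the leading binary digits of $1/y$ up to position $k_m+r$ are forced. Consequently the first coordinates $y2^{-k_j}$ of the vectors are close to $x2^{-k_j}$ for $j\le m$ (or $m+r$). All remaining coordinates are bounded by $y2^{-(k_m+r)}\leq 2^{-(k_m+r)}$ in absolute value, so the tails are uniformly small. Together with $|y-x|\to0$, this gives the required $\ell^\infty$ convergence.
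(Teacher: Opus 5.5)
Your proof is correct and follows the paper's argument. The case split is the same, via Lemmas \ref{lem_elem1} and \ref{lem_elem2}; Proposition \ref{prop_uniqvalue} reconciles the two one-sided limits where $1/x$ has a finite expansion, and Proposition \ref{prop_contHKR} supplies continuity on $\mathcal{S}$.

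As a side remark, the sufficient condition in your first paragraph holds automatically, because $\mathcal{S}$ is compact and $\vec{\vartheta}\mapsto\vartheta_1$ is continuous. Hence any $\vec{\vartheta}^{(n)}\in\mathcal{S}$ with first coordinates $y_n\to x$ have subsequential limits in $\mathcal{S}$ with first coordinate $x$, and you could skip the two elementary lemmas altogether.
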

\begin{proof}
Suppose that $1/2<x\leq 1$ and $1/x$ has a finite binary expansion of the form \eqref{15-8-1}. By Lemma \ref{lem_elem1}, we have 
\[
\lim_{y\to x-}\|\vec{\vartheta}_\infty(y)-\vec{\vartheta}(x)\|_{\ell^\infty}=	\lim_{y\to x+}\|\vec{\vartheta}_\infty(y)-\vec{\vartheta}_\infty(x)\|_{\ell^\infty}=0.
\]
By the continuity of the function $ H(\vec{\vartheta};s)$ over the set $\mathcal{S}$ (see Proposition \ref{prop_contHKR}) and the equality \eqref{same-values-H}, we conclude that 
	\begin{equation}\label{limconvHcal}
	\lim_{y\to x}|\mathcal{H}(y,s)-\mathcal{H}(x,s)|=0.
	\end{equation}
If $1/2\leq x<1$ and $1/x$ does not admit a finite binary expansion of the form \eqref{15-8-1}, then by Lemma \ref{lem_elem2} we have $\lim_{y\to x}\|\vec{\vartheta}_\infty(y)-\vec{\vartheta}_{\infty}(x)\|_{\ell^\infty}=0$, which implies \eqref{limconvHcal}.

The very same argument can be used to extend the proof to the other two functions $\mathcal{K}(x)$ and $\mathcal{R}(x)$.	
\end{proof}

\noindent\textbf{Proof of Theorem \ref{theo:density-interval}:} We know by Theorem \ref{theo:limpointHKR} that the set of limit points of the sequence $(H(\eta(N);s))_{N}$ is the set $\{H(\vec{\vartheta};s): \vec{\vartheta}\in\mathcal{S}\}$. But Propositions \ref{prop_characS} and \ref{prop_twodiff} immediately imply that $\{H(\vec{\vartheta};s): \vec{\vartheta}\in\mathcal{S}\}=\{\mathcal{H}(x,s): 1/2\leq x\leq 1\}$. Then from Theorem \ref{theo:contHKRcal} we conclude that the set $\{H(\vec{\vartheta};s): \vec{\vartheta}\in\mathcal{S}\}$ is a closed interval. The same argument can be applied to the sequences $(K(\eta(N)))_{N}$ and $(R(\eta(N)))_{N}$. 
\qed

\section{Extremal points of the functions $\mathcal{H}(x,s)$}

For $1/2\leq x\leq 1$ and $s>-1$, we have previously defined 
	\begin{align*}
		\mathcal{H}(x,s) & :=H(\vec{\vartheta};s)
	\end{align*}
	where $\vec{\vartheta}=(\vartheta_{1},\vartheta_{2},\ldots)$ is any vector in $\mathcal{S}$ such that $\vartheta_{1}=x$. By Proposition \ref{prop_uniqvalue}, these functions are well-defined. Because $\mathcal{H}(1/2,s)=\mathcal{H}(1,s)=1$ and the bounds \eqref{convexitybounds}, for every $0<s<1$ the absolute minimum of $\mathcal{H}(x,s)$ is attained at some point of $(1/2,1)$, and the same is true of the absolute maximum of   $\mathcal{H}(x,s)$ for $-1<s<0$, or $s>1$.

Recall that for $1/2< x<1$, we write 
\[
\vec{\vartheta}_\infty(x)=\left(x,\frac{x}{2^{k_{2}}},\frac{x}{2^{k_{3}}},\ldots\right)
\]
if 
\begin{align*}
	\frac{1}{x}=\frac{1}{2^{k_{1}}}+\frac{1}{2^{k_{2}}}+\cdots+\frac{1}{2^{k_{j}}}+\cdots,\qquad 0=k_1<k_2<\cdots
\end{align*}
is the \emph{infinite} binary expansion of $1/x$. If this is the only binary expansion that $1/x$ has, we write $x\in I_1$. Otherwise, $1/x$ has a second, \emph{finite} binary expansion   	
\[
\frac{1}{x}=1+\frac{1}{2^{k_{2}}}+\cdots+\frac{1}{2^{k_{m}}},
\]
in which case we write $x\in I_2$ and  
\begin{align*}
	\vec{\vartheta}(x)=\left(x,\frac{x}{2^{k_{2}}},\frac{x}{2^{k_{3}}},\ldots, \frac{x}{2^{k_{m}}},0,0,\ldots\right).
\end{align*}
Thus, if the point $x\in I_2$ has two associated vectors \eqref{firstform} and \eqref{secondform}, then  
\begin{align*}
	\vec{\vartheta}_\infty(x):=\left(x,\frac{x}{2^{k_{2}}},\frac{x}{2^{k_{3}}},\ldots, \frac{x}{2^{k_{m-1}}},\frac{x}{2^{k_{m}+1}},\frac{x}{2^{k_{m}+2}},\ldots,\frac{x}{2^{k_{m}+r}},\ldots \right),
\end{align*}
and in virtue of Proposition \ref{prop_uniqvalue} we have
\begin{align*}
	H(\vec{\vartheta}_\infty(x),s)=H(\vec{\vartheta}(x),s).
\end{align*}

\begin{theorem}\label{thm-deriv} If $0<s<1$, then every local minimum of $\mathcal{H}(x,s)$ is attained at some point  $x\in I_1$ satisfying the equation 
	\begin{equation}\label{derivative-ecua}
	\mathcal{H}(x,s)=\frac{2(2^{s}-1)}{s+1}\sum_{n=1}^{\infty}\left(\frac{x}{2^{k_n}}\right)^s.
	\end{equation}
If $s>1$, then every local maximum of $\mathcal{H}(x,s)$ is attained at some point $x\in I_1$ satisfying \eqref{derivative-ecua}.
\end{theorem}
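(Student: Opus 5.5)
The plan is to compute one-sided derivatives of $x\mapsto\mathcal{H}(x,s)$ from the explicit structure of $\vec{\vartheta}(y)$ for $y$ near $x$, as given by Lemmas \ref{lem_elem1} and \ref{lem_elem2}. Fix $x\in(1/2,1)$ and write $\theta_n=x2^{-k_n}$. The basic observation is that when $y$ is close to $x$, the vector $\vec{\vartheta}(y)$ has the form $y\,2^{-k_n}$ on a long initial block $1\le n\le L$. Here $L\to\infty$ as $y\to x$, and what follows is a tail $\vec\tau(y)$ of total mass
\[
\sigma(y):=1-y\sum_{n\le L}2^{-k_n}.
\]
By Proposition \ref{propertiesvarthetavectors}, each tail component is at most of the order of the last block component. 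I will split $H(\vec{\vartheta}(y);s)$ as in \eqref{19-7-1} into three pieces: the pure head part, the cross terms $2(2^s-1)\theta_n^s\cdot(\text{tail mass})$ for head indices $n$, and the pure tail part.

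First I handle the head and the cross terms. The pure head part is homogeneous of degree $s+1$ in $y$, so its derivative is $(s+1)/y$ times itself. The cross terms contribute $2(2^s-1)\sigma(y)\sum_{n\le L}(y2^{-k_n})^s$ to first order. The pure tail part is $O(\sigma^{s+1})$ plus terms $\tau_i^s\tau_j$ that are also $o(|y-x|)$ when $s>0$; here I use \eqref{varthetaprop5} and the bound $|\sigma(y)|\lesssim|y-x|$, because the tail mass compensates exactly for the change $\delta=y-x$ in the head.

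Second, I treat points $x\in I_1$, where $L\to\infty$ from both sides by Lemma \ref{lem_elem2}. Using $\sigma(y)=-(y-x)/x+o(|y-x|)$ together with \eqref{varthetaprop2} gives
\[
\mathcal{H}(y,s)-\mathcal{H}(x,s)=\frac{y-x}{x}\Big((s+1)\mathcal{H}(x,s)-2(2^s-1)\sum_{n\ge1}\theta_n^s\Big)+o(|y-x|).
\]
The truncation of $\sum\theta_n^s$ at $L$ costs $o(1)$ because $L\to\infty$. So $\mathcal{H}(\cdot,s)$ is differentiable at $x$. At a local extremum the bracket must vanish, and since $\theta_n=x2^{-k_n}$ this vanishing is exactly \eqref{derivative-ecua}.

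Third, I rule out points $x\in I_2$ (and note the endpoints are excluded by the remarks preceding the theorem). By Lemma \ref{lem_elem1}(i), for $y\to x^-$ the head is the finite vector $\vec{\vartheta}(x)$ with $m$ terms plus a deep tail. By part (ii), for $y\to x^+$ the head is $\vec{\vartheta}_\infty(x)$ truncated after the $r$ digits $k_m+1,\dots,k_m+r$, again followed by a deep tail. The expansion from the second step then yields the one-sided derivatives. The left one involves $G^-=\sum_{n\le m}\theta_n^s$. The right one involves $G^+=\sum_{n<m}\theta_n^s+\theta_m^s\sum_{l\ge1}2^{-ls}=\sum_{n<m}\theta_n^s+\theta_m^s/(2^s-1)$. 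Since $0<s$ and $1/(2^s-1)$ versus $1$ depends on $s<1$ or $s>1$, we get $G^+>G^-$ for $0<s<1$ and $G^+<G^-$ for $s>1$.

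Plugging in, the sign pattern should produce $D^-\mathcal{H}>D^+\mathcal{H}$ when $0<s<1$ (since $2^s-1>0$ and the cross term enters with $-\sigma$). That is a concave kink, which forbids a local minimum. For $s>1$ the inequality flips, giving a convex kink, which forbids a local maximum. I expect this sign bookkeeping to be the main obstacle, because $2(2^s-1)$, $1/(2^s-1)$ and the orientation of $\sigma$ all interact. The other delicate point is confirming that the pure tail contributions are genuinely $o(|y-x|)$ uniformly. For that I would bound them by $C\,\sigma\cdot\max(\text{tail component})^{s}$ using Proposition \ref{propertiesvarthetavectors} and note that the maximal tail component tends to $0$ as $y\to x$.
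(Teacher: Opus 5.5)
Your argument at points of $I_1$ (first and second steps) has a genuine gap: it drops the tail of $x$ itself. Let $L$ be the length of the block shared by the expansions of $1/x$ and $1/y$, and let $t_L=\sum_{n\le L}2^{-k_n}$. Then exactly $\sigma(y)=\sigma(x)-(y-x)t_L$, where $\sigma(x)=x\sum_{n>L}2^{-k_n}>0$ because $x\in I_1$ has an infinite expansion. So $\sigma(y)=-(y-x)/x+o(|y-x|)$ is equivalent to $\sigma(x)=o(|y-x|)$, and this is false in general:
\begin{itemize}
\item for $y>x$ it contradicts $\sigma(y)\ge 0$, which forces $\sigma(x)\ge (y-x)t_L$;
\item $\sigma(x)/|y-x|$ can even be unbounded. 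Take $k_{n+1}/k_n\to\infty$ and $1/y=t_L+\sum_{j=1}^{R}2^{-k_{L+1}-j}$ with $R=k_{L+2}-k_{L+1}-1$. Then $|y-x|\asymp 2^{-k_{L+2}}$, while $\sigma(x)$ and $\sigma(y)$ are $\asymp 2^{-k_{L+1}}$.
\end{itemize}
In that example your bound $|\sigma(y)|\lesssim|y-x|$ fails. The pure-tail terms of $x$ and of $y$ are each at least of order $2^{-k_{L+1}(s+1)}$, so they are not $o(|y-x|)$.

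What differentiability actually needs is that the \emph{difference} of the two tail contributions be $o(|y-x|)$. That amounts to a scale-invariant Lipschitz bound $|\Phi(a)-\Phi(b)|\le C\max(a,b)^s|a-b|$ for the tail functional $\Phi(u)=\sum_j2^{-q_j(s+1)}+2(2^s-1)\sum_j2^{-q_js}\sum_{i>j}2^{-q_i}$, where $u=\sum_j2^{-q_j}$. Such a bound can be proved: split at the dyadic point $2^{-P}$, with $P$ the first digit of the larger tail, and use Proposition \ref{prop_uniqvalue} there. But it is a separate cancellation argument that your proposal does not contain, so \eqref{derivative-ecua} is not yet established.

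The cheapest repair is to give up differentiability. At a local extremum you only need the sign of the difference quotient along one sequence from each side, and you may choose sequences with $\sigma(x)=O(|y-x|)$:
\begin{itemize}
\item for $y\downarrow x$, take $1/y=1/x-2^{-k_m}$ (delete the $m$-th digit);
\item for $y\uparrow x$, take $1/y=1/x+2^{-k_m}$ (replace $k_m$ by $k_m-1$), for the infinitely many $m$ with $k_m\ge k_{m-1}+2$, which exist because $x\in I_1$.
\end{itemize}
Along these, both tails have mass $O(2^{-k_m})$ and the pure-tail terms are $O(2^{-k_m(s+1)})=o(2^{-k_m})$. Your bookkeeping, once it keeps the cross term $2(2^s-1)\sigma(x)\sum_{n\le L}\theta_n^s$ of $x$, then gives the two opposite inequalities that combine into \eqref{derivative-ecua}. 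This is precisely the paper's proof.

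Your third step is correct in substance and is the paper's argument: the one-sided slopes built from $G^-$ and $G^+$, and the resulting sign of the kink, are right. The configurations there are harmless. From the left, the finite representation of $x$ has no tail. From the right, choosing $r$ maximal gives $1/x-1/y\ge 2^{-k_m-r-1}$, hence $\sigma(x)=x2^{-k_m-r}\le 2(y-x)/y$. Even so, the right-hand slope must again be computed keeping $x$'s cross term, not via $\sigma(y)\approx-(y-x)/x$.

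Finally, ruling out the endpoints as \emph{local} extrema needs the strict inequality from Theorem \ref{theo:convexH}, not just the remark about absolute extrema.
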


Since \eqref{derivative-ecua} is then satisfied at the points where the function $\mathcal{H}(x,s)$ attains its absolute minimun or maximum (depending on the value of $s$), we obtain as a corollary that 
\[
 \frac{2(2^{s}-1)}{s+1}\leq \mathcal{H}(x,s)\leq 1,\qquad 0<s<1, \quad x\in[1/2,1],
\]
\begin{equation}\label{second-bound-H}
1\leq \mathcal{H}(x,s)\leq \frac{2(2^{s}-1)}{(s+1)},\qquad s>1,\quad x\in[1/2,1].
\end{equation}

\noindent\textbf{Proof of Theorem \ref{thm-deriv}:} 
Let us assume that $x\in I_1$ is a point where the function $\mathcal{H}(\cdot,s)$ has a local maximum (if $s>1$) or a local minimum (if $0<s<1$). Let
\[
\vec{\vartheta}_\infty(x)=\left(x,\frac{x}{2^{k_{2}}},\frac{x}{2^{k_{3}}},\ldots, \frac{x}{2^{k_{m-1}}},\frac{x}{2^{k_{m}}},\frac{x}{2^{k_{m+1}}},\ldots\right),
\]
and define  $x^+_m$  by 
\[
\frac{1}{x^+_m}=1+\frac{1}{2^{k_{2}}}+\cdots+\frac{1}{2^{k_{m-1}}}+\frac{1}{2^{k_{m+1}}}+\frac{1}{2^{k_{m+2}}}+\cdots
\]
so that 
\begin{align}\label{releq1}
x^+_m-x=\frac{x x^+_m}{2^{k_m}}>0
\end{align}
and 
\[
\vec{\vartheta}_\infty(x^+_m)=\left(x^+_m,\frac{x^+_m}{2^{k_{2}}},\frac{x^+_m}{2^{k_{3}}},\ldots, \frac{x^+_m}{2^{k_{m-1}}},\frac{x^+_m}{2^{k_{m+1}}},\ldots\right).
\]
By comparing the series  $H(\vec{\vartheta}_\infty(x),s)$ and $H(\vec{\vartheta}_\infty(x^+_m),s)$ that result from   direct evaluations in \eqref{19-7-1}, one obtains the identity
\begin{align*}
(x^+_m)^{-s-1}\mathcal{H}(x^+_m,s) ={}&x^{-s-1}\mathcal{H}(x,s)-2^{-k_m(s+1)}-2(2^{s}-1)2^{-k_m}\sum_{n=1}^{m-1}2^{-k_ns}\\
&-2(2^{s}-1)2^{-k_ms}\sum_{j=m+1}^{\infty}2^{-k_j}.
\end{align*}
From this identity and \eqref{releq1} we find
\begin{align*}
\frac{	\mathcal{H}(x^+_m,s)-\mathcal{H}(x,s)}{x^+_m-x}={} &\frac{\left(\frac{x^+_m}{x}\right)^{s+1}-1}{x^+_m-x}\mathcal{H}(x,s)-\frac{(x^+_m-x)^s}{x^{s+1}}-\frac{( x^+_m)^{s}2(2^{s}-1)}{x }\sum_{n=1}^{m-1}2^{-k_ns}\\
& -2(2^{s}-1)\frac{(x^+_m-x)^s}{x ^{s+1}}\sum_{j=m+1}^{\infty}2^{k_m-k_j}.
\end{align*}
If we take the limit as $m\to\infty$ (recall that $s>0$), we obtain
\begin{align}\label{ymlimit}\begin{split}
x^{-1}\left((s+1)\mathcal{H}(x,s)-x^s2(2^{s}-1)\sum_{n=1}^{\infty}2^{-k_ns}\right)\geq{} & 0,\qquad 0<s<1,\\
x^{-1}\left((s+1)\mathcal{H}(x,s)-x^s2(2^{s}-1)\sum_{n=1}^{\infty}2^{-k_ns}\right)\leq {} & 0,\qquad s>1.
\end{split}
\end{align}
Now, since $x\not\in I_2$, the vector
\[
\vec{\vartheta}_\infty(x)=\left(x,\frac{x}{2^{k_{2}}},\frac{x}{2^{k_{3}}},\ldots\right),
\]
has the property that there are infinitely many values of $m$ such that 
\[
\frac{1}{2^{k_{m-1}+1}}>\frac{1}{2^{k_{m}}}.
\]
For such values of $m$ consider the number $x^-_m$  given by 
\[
\frac{1}{x^-_m}=1+\frac{1}{2^{k_{2}}}+\cdots+\frac{1}{2^{k_{m-1}}}+\frac{1}{2^{k_{m}-1}}+\frac{1}{2^{k_{m+1}}}+\cdots
\]
so that
\begin{align}\label{releq2}
	x^-_m-x=-\frac{x x^-_m}{2^{k_m}}<0.
\end{align}
This time we get the identity
\begin{align*}
	(x^-_m)^{-s-1}\mathcal{H}(x^-_m,s) 
	={} & x^{-s-1}\mathcal{H}(x,s)+(2^{s+1}-1)2^{-k_m(s+1)}\\
	&+2(2^{s}-1)2^{-k_m}\sum_{n=1}^{m-1}2^{-k_ns}+2(2^{s}-1)^22^{-k_ms}\sum_{j=m+1}^{\infty}2^{-k_j}.
\end{align*}
From this identity and \eqref{releq2} we find
\begin{align*}
	\frac{	\mathcal{H}(x^-_m,s)-\mathcal{H}(x,s)}{x^-_m-x}={} &\frac{\left(\frac{x^-_m}{x}\right)^{s+1}-1}{x^-_m-x}\mathcal{H}(x,s)-(2^{s+1}-1)\frac{(x-x^-_m)^s}{x^{s+1}}-(x^-_m)^s\frac{2(2^{s}-1)}{x}\sum_{n=1}^{m-1}2^{-k_ns}\\
	& -2(2^{s}-1)^2\frac{(x-x^-_m)^s}{x^{s+1}}\sum_{j=m+1}^{\infty}2^{k_m-k_j}.
\end{align*}
Taking the limit as $m\to\infty$ we get
\begin{align}\label{vmlimit}\begin{split}
		x^{-1}\left((s+1)\mathcal{H}(x,s)-x^s2(2^{s}-1)\sum_{n=1}^{\infty}2^{-k_ns}\right)\leq{} & 0,\qquad 0<s<1,\\
		x^{-1}\left((s+1)\mathcal{H}(x,s)-x^s2(2^{s}-1)\sum_{n=1}^{\infty}2^{-k_ns}\right)\geq {} & 0,\qquad s>1.
	\end{split}
\end{align}
It follows from \eqref{ymlimit} and \eqref{vmlimit} that \eqref{derivative-ecua} holds.

What remains to be proven is the fact that at no point  $x\in I_2$ can the function $H(\cdot,s)$ attain a local minimum or maximum (depending on the value of $s$). We argue by contradiction. Let us assume that the function $\mathcal{H}(\cdot,s)$ attains a local minimum (resp. a local maximum) at the number $x\in (1/2,1)$ if $0<s<1$ (resp. if $s>1$) and that $1/x$ has a finite binary expansion of the form 
\begin{align*}
	\frac{1}{x}={} &\sum_{j=1}^{p}\frac{1}{2^{k_j}}
\end{align*}	
with 
$0= k_1<k_2<\cdots<k_{p}= M$ and $p\geq 2$. Consider  the number  
\begin{align*}
	\frac{1}{x^-_m}:=	\frac{1}{x}+\frac{1}{2^{k_p+m}}={} &\sum_{j=1}^{p}\frac{1}{2^{k_j}}+\frac{1}{2^{k_p+m}}
\end{align*}
so that $x^-_m<x$ and $\lim_{m\to\infty}x^-_m=x$. Similarly,  
\begin{align*}
	\frac{1}{x^+_{m}}:=	 &\sum_{j=1}^{p-1}\frac{1}{2^{k_j}}+\sum_{j=1}^{m}\frac{1}{2^{k_p+j}}<\frac{1}{x},
\end{align*}
so that $x^+_m>x$ and $\lim_{m\to\infty}x^+_m=x$. Note that   
\[
x^-_m-x=-\frac{xx^-_m}{2^{k_p+m}}.
\]
Associated to the above binary representations we have the vectors 
\begin{align*}
	\vec{\vartheta}(	x)={} &\left(	\frac{x}{2^{k_1}},\frac{x}{2^{k_2}},\ldots,\frac{	x}{2^{k_p}}\right),\\
	\vec{\vartheta}(x^-_m)={} &\left(	\frac{x^-_m}{2^{k_1}},\frac{x^-_m}{2^{k_2}},\ldots,\frac{	x^-_m}{2^{k_p}},\frac{	x^-_m}{2^{k_p+m}}\right),\\
	\vec{\vartheta}(x^+_{m})={} &\left(	\frac{x^+_{m}}{2^{k_1}},\frac{x^+_{m}}{2^{k_2}},\ldots,\frac{x^+_{m}}{2^{k_{p-1}}},	\frac{x^+_{m}}{2^{k_p+1}},\cdots, \frac{x^+_{m}}{2^{k_p+m}}\right).
\end{align*}
By evaluating $ H(\vec{\theta};s)$ at the first two of these vectors and comparing the resulting expressions one finds the identity 
\begin{align*}
	\mathcal{H}(x^-_m,s)	={} &\left (\frac{x^-_m}{x}\right)^{s+1}\mathcal{H}(x,s)
	+(x^-_m)^{s+1}\left(\frac{1}{2^{(k_p+m)(s+1)}}	+\frac{2^{s}-1}{2^{k_p+m-1}}\sum_{j=1}^{p}\frac{1}{2^{k_js}}\right).
\end{align*}
Then
\begin{align*}
	\frac{\mathcal{H}(x^-_m,s)-\mathcal{H}(x,s)}{x^-_m-x}	 &{} =\frac{\left (\frac{x^-_m}{x}\right)^{s+1}-1}{x^-_m-x}\mathcal{H}(x,s)-\frac{(x^-_m)^{s+1}\left(\frac{1}{2^{(k_p+m)(s+1)}}	+\frac{2^{s}-1}{2^{k_p+m-1}}\sum_{j=1}^{p}\frac{1}{2^{k_js}}\right)}{\frac{xx^-_m}{2^{k_p+m}}}.
\end{align*}
By taking the limit as $m\to\infty$, we obtain
\begin{align}
(s+1)\mathcal{H}(x,s)-x^{s}2(2^{s}-1)\sum_{j=1}^{p}\frac{1}{2^{k_js}}\leq {} &0,\qquad 0<s<1,\label{ecua19}\\
(s+1)\mathcal{H}(x,s)-x^{s}2(2^{s}-1)\sum_{j=1}^{p}\frac{1}{2^{k_js}}	\geq {} &0,  \qquad s>1.\label{ecua20}
\end{align}
Similarly, by evaluating $ H(\vec{\theta};s)$ at $\vec{\theta}= 	\vec{\vartheta}(x)$ and at $\vec{\theta}= 	\vec{\vartheta}(x^+_m)$ and comparing the resulting expressions we find that
\begin{align*}
	x^{-s-1}	\mathcal{H}(x,s)	={} &(x^+_m)^{-s-1}	\mathcal{H}(x^+_m,s)\\
	&+a_{s,m}+2(2^{s}-1)a_m\sum_{j=1}^{p-1}\frac{1}{2^{k_js}}-2(2^{s}-1)\sum_{i=1}^{m-1}\frac{1}{2^{(k_p+i)s}}\sum_{l=i+1}^{m}\frac{1}{2^{k_p+l}},
\end{align*}
where 
\[
a_m:=\frac{1}{x}-\frac{1}{x^+_m}=\left(\frac{1}{2^{k_p}}-\sum_{i=1}^m\frac{1}{2^{k_p+i}}\right)=\frac{1}{2^{k_p+m}},
\]
and (as $m\to\infty$)
\begin{align*}
a_{s,m}:=\left(\frac{1}{2^{k_p(s+1)}}-\sum_{i=1}^m\frac{1}{2^{(k_p+i)(s+1)}}\right)={} &\frac{2(2^s-1)}{2^{k_p(s+1)}(2^{s+1}-1)}+\frac{1}{2^{k_p(s+1)}(2^{s+1}-1)2^{m(s+1)}}\\
={} & \frac{2(2^s-1)}{2^{k_p(s+1)}(2^{s+1}-1)}+o(2^{-m}).
\end{align*}
If we now use that 
\begin{align*}
2(2^s-1)\sum_{i=1}^{m-1}\frac{1}{2^{(k_p+i)s}}\sum_{l=i+1}^{m}\frac{1}{2^{k_p+l}}
	={} & \frac{2(2^s-1)}{2^{(k_p+1)(s+1)}}\sum_{i=1}^{m-1}\frac{1}{2^{(i-1)s}}\left(\frac{1}{2^{i-1}}-\frac{1}{2^{m-1}}\right)\\
	={} &\frac{2(2^s-1)}{2^{k_p(s+1)}(2^{s+1}-1)}-\frac{2}{2^{k_p(s+1)+m}}+o(2^{-m}),
\end{align*}
we find (as $m\to\infty$)
\begin{align*}
 \frac{\mathcal{H}(x^+_m,s)-\mathcal{H}(x,s)}{x^+_m-x}	={} &\frac{\left(\frac{x^+_m}{x}\right)^{s+1}-1}{x^+_m-x}	\mathcal{H}(x,s)-\frac{(x^+_m)^{s}}{x}\left(2(2^{s}-1)\sum_{j=1}^{p-1}\frac{1}{2^{k_js}}+\frac{2}{2^{k_ps}}+o(1)\right),
\end{align*}
which after taking the limit as $m\to\infty$ gives us
\begin{align}
(s+1)\mathcal{H}(x,s)-x^s2(2^{s}-1)\sum_{j=1}^{p-1}\frac{1}{2^{k_js}}-\frac{2x^s}{2^{k_ps}}\geq {} &0,\qquad 0<s<1,\label{ecua21}\\
(s+1)\mathcal{H}(x,s)-x^s2(2^{s}-1)\sum_{j=1}^{p-1}\frac{1}{2^{k_js}}-\frac{2x^s}{2^{k_ps}}	\leq {} &0,  \qquad s>1.\label{ecua22}
\end{align}
Comparing \eqref{ecua19} with \eqref{ecua21} and \eqref{ecua20} with \eqref{ecua22} we deduce that
\begin{align*}
-\frac{2(2^{s}-1)}{2^{k_ps}}\leq {} & -\frac{2}{2^{k_ps}},\qquad 0<s<1,\\
-\frac{2(2^{s}-1)}{2^{k_ps}}\geq {} &-\frac{2}{2^{k_ps}}	,  \qquad s>1,
\end{align*}
both of which are impossible.

\section{Proof of Theorem \ref{approxthm}}

Recall that the numbers $x_{M,n}$ were introduced in \eqref{def:xMn}. For fixed values of $M$ and $n$, we have 
\begin{align*}
	\frac{1}{	x_{M,n}}=\frac{2^M+2n+1}{2^M}={} &\frac{\sum_{j=1}^{p}2^{n_j}}{2^M}=\sum_{k=1}^{p}\frac{1}{2^{M-n_j}},
\end{align*}	
where $M= n_1>n_2>\cdots>n_{p}=0$ and $p\geq 2$.  Writing $k_j=M-n_j$, we have  
\begin{align*}
	\frac{1}{	x_{M,n}}={} &\sum_{j=1}^{p}\frac{1}{2^{k_j}}
\end{align*}	
with 
$0= k_1<k_2<\cdots<k_{p}= M$, so that 
\begin{align*}
	\frac{1}{x_{M+1,2n+1}}={}&	\frac{2^{M+1}+4n+3}{2^{M+1}}=\sum_{j=1}^{p}\frac{1}{2^{k_j}}+\frac{1}{2^{M+1}}\\
	\frac{1}{x_{M+1,2n}}={} &	\frac{2^{M+1}+4n+1}{2^{M+1}}=\sum_{j=1}^{p-1}\frac{1}{2^{k_j}}+\frac{1}{2^{M+1}}.
\end{align*}
In particular, we see that  for all $s\geq -1$,
\begin{align}\label{ecua10}\begin{split}
		1-\left(\frac{x_{M+1,2n+1}}{x_{M,n}}\right)^{s+1}<{}
		&1-\left(1-\frac{	1}{2^{M+1}}\right)^{s+1}\leq \frac{s+1}{2^{M+1}},\\
		1-\left(\frac{x_{M,n}}{x_{M+1,2n}}\right)^{s+1}<{}
		&1-\left(1-\frac{	1}{2^{M+1}}\right)^{s+1}\leq \frac{s+1}{2^{M+1}}.
	\end{split}
\end{align}
Associated to the above binary representations we have the vectors 
\begin{align*}
	\vec{\vartheta}(	x_{M,n})={} &\left(	\frac{x_{M,n}}{2^{k_1}},\frac{x_{M,n}}{2^{k_2}},\ldots,\frac{	x_{M,n}}{2^{k_p}}\right),\\
	\vec{\vartheta}(	x_{M+1,2n+1})={} &\left(	\frac{x_{M+1,2n+1}}{2^{k_1}},\frac{x_{M+1,2n+1}}{2^{k_2}},\ldots,\frac{	x_{M+1,2n+1}}{2^{k_p}},\frac{	x_{M+1,2n+1}}{2^{M+1}}\right),\\
	\vec{\vartheta}(	x_{M+1,2n})={} &\left(	\frac{x_{M+1,2n}}{2^{k_1}},\frac{x_{M+1,2n}}{2^{k_2}},\ldots,\frac{	x_{M+1,2n}}{2^{k_{p-1}}},\frac{	x_{M+1,2n}}{2^{M+1}}\right).
\end{align*}
Since by definition, $\mathcal{H}(x,s)=	H(\vec{\vartheta}(x);s)$, plugging these vectors into \eqref{def:H} leads to the relations  
\begin{align}\label{ecua8}\begin{split}
		\mathcal{H}(x_{M,n},s)-	\mathcal{H}(x_{M+1,2n+1},s)	={} &\left(1-\left(\frac{x_{M+1,2n+1}}{x_{M,n}}\right)^{s+1}\right)\mathcal{H}(x_{M,n},s)\\
		&-(x_{M+1,2n+1})^{s+1}\left(\frac{1}{2^{(M+1)(s+1)}}	+\frac{2^{s}-1}{2^{M}}\sum_{j=1}^{p}\frac{1}{2^{k_js}}\right),
	\end{split}
\end{align}
\begin{align}\label{ecua9}\begin{split}
		\mathcal{H}(x_{M,n},s)	-\mathcal{H}(x_{M+1,2n},s)	={}&\left(\left(\frac{x_{M,n}}{x_{M+1,2n}}\right)^{s+1}-1\right)\mathcal{H}(x_{M+1,2n},s)\\
		&+(x_{M,n})^{s+1}\left(\frac{2^{s+1}-1}{2^{(M+1)(s+1)}}+\frac{2^{s}-1}{2^M}\sum_{j=1}^{p-1}\frac{1}{2^{k_js}}\right).
	\end{split}
\end{align}

Assume now that $0<s<1$, when we know that $0<d_s\leq \mathcal{H}(x_{M,n},s)	\leq 1$. From \eqref{ecua8} and \eqref{ecua10} we obtain	\begin{align}\label{ecua11}\begin{split}
		\mathcal{H}(x_{M,n},s)-	\mathcal{H}(x_{M+1,2n+1},s)	
		\leq {} & \frac{s+1}{2^{M+1}}.
	\end{split}
\end{align}
On the other hand, by the fact that $p\leq M+1$ we have the bound  
\[
\sum_{j=1}^{p-1}\frac{1}{2^{k_js}}\leq \sum_{\ell=0}^{M-1}\frac{1}{2^{\ell s}}=\frac{2^s}{2^s-1}\left(1-\frac{1}{2^{Ms}}\right), \qquad s\not=0,
\]
which together with \eqref{ecua9} yields  
\begin{align}\label{ecua15}\begin{split}
		\mathcal{H}(x_{M,n},s)	-\mathcal{H}(x_{M+1,2n},s)	\leq {}&\frac{2^{s+1}-1}{2^{(M+1)(s+1)}}+\frac{2^{s}-1}{2^M}\sum_{j=1}^{p-1}\frac{1}{2^{k_js}}\\
		\leq {}& \frac{1}{2^{M(s+1)}}- \frac{1}{2^{(M+1)(s+1)}}+\frac{2^{s}}{2^M}\left(1-\frac{1}{2^{sM}}\right)<\frac{2^s}{2^M}.
	\end{split}
\end{align}
Since $s+1<2^{s+1}$ for all $s>0$, we obtain from \eqref{ecua11} and \eqref{ecua15}  the inequality 
\begin{align*}
	\mathcal{H}(x_{M,n},s)\leq \min\{\mathcal{H}(x_{M+1,2n+1},s),\mathcal{H}(x_{M+1,2n},s)\}	+ \frac{2^s}{2^{M}}, \qquad n=0, 1,\ldots,2^{M-1}-1,
\end{align*}
which readily yields that $d_{s,M}\leq d_{s,M+1}	+2^s/2^{M}$. It follows that 
\begin{align*}
	0\leq d_{s,M}-	d_s={} & \lim_{L\to\infty}\sum_{\ell=0}^{L-1}(d_{s,M+\ell}-d_{s,M+\ell+1})\leq 2^s\sum_{\ell=0}^{\infty}\frac{1}{2^{M+\ell}}=\frac{2^s}{2^{M-1}}, 
\end{align*}	
proving \eqref{ecua12}.

Let us now assume that $s>1$ or that $-1<s<0$, in which case $d_s$ and $d_{s,M}$ are maximum values, and so  we have 
\begin{align}\label{ecua16}
	0\leq d_s- d_{s,M}={} &\lim_{L\to\infty}\sum_{\ell=0}^{L-1}(d_{s,M+\ell+1}-d_{s,M+\ell}).
\end{align}
If $s>1$  we deduce from \eqref{ecua8} that   	
\begin{align*}
	\mathcal{H}(x_{M+1,2n+1},s)-\mathcal{H}(x_{M,n},s)\leq {}	&	(x_{M+1,2n+1})^{s+1}\left(\frac{1}{2^{(M+1)(s+1)}}	+\frac{2^{s}-1}{2^{M}}\sum_{j=1}^{p}\frac{1}{2^{k_js}}\right)\\
	\leq {} &\frac{1}{2^{(M+1)(s+1)}}+\frac{2^s}{2^M}\left(1-\frac{1}{2^{s(M+1)}}\right)<\frac{2^s}{2^M}
\end{align*}
while from \eqref{ecua9}, \eqref{second-bound-H}, and \eqref{ecua10}  we deduce that
\begin{align*}
	\mathcal{H}(x_{M+1,2n},s)-\mathcal{H}(x_{M,n},s)		\leq {}&\left(1-\left(\frac{x_{M,n}}{x_{M+1,2n}}\right)^{s+1}\right)\mathcal{H}(x_{M+1,2n},s)\\
	\leq {}&\frac{1}{2^{M+1}}(2^{s+1}-2 )<\frac{2^s}{2^M}.
\end{align*}
Thus, it follows that for $s>1$.
\begin{align*}
	\max\{\mathcal{H}(x_{M+1,2n+1},s),\mathcal{H}(x_{M+1,2n},s)\}	\leq {} &\mathcal{H}(x_{M,n},s) 	+ \frac{2^s}{2^{M}}, \qquad n=0, 1,\ldots,2^{M-1}-1,
\end{align*}
which yields the inequality $d_{s,M+1}\leq d_{s,M}	+2^s/2^M$, and by means of \eqref{ecua16} we arrive at \eqref{ecua13}.

If $-1<s<0$, then $2^s-1<0$ and we get from \eqref{ecua8} that 
\begin{align}\label{ecua17}
	\mathcal{H}(x_{M+1,2n+1},s)	-	\mathcal{H}(x_{M,n},s)\leq \frac{1}{2^{(M+1)(s+1)}},
\end{align}
whereas \eqref{ecua9}, \eqref{ecua10} and \eqref{ecua6} yield
\begin{align*}
	\mathcal{H}(x_{M+1,2n},s)-	\mathcal{H}(x_{M,n},s)		\leq {}&\left(1-\left(\frac{x_{M,n}}{x_{M+1,2n}}\right)^{s+1}\right)\mathcal{H}(x_{M+1,2n},s)\\
	&-(x_{M,n})^{s+1}\frac{2^{s}-1}{2^M}\sum_{j=1}^{p-1}\frac{1}{2^{k_js}}\\
	\leq {} & \frac{(s+1)}{2^{M}}\frac{2^{s}}{2^{s+1}-1}+ \frac{2^s}{2^{M(s+1)}}\left(1-2^{sM}\right).
\end{align*}
Since 
\[
\frac{s+1}{2^{s+1}-1}< \frac{1}{\log 2},\qquad -1<s<0,
\]
we obtain 
\begin{align}\label{ecua18}
	\mathcal{H}(x_{M+1,2n},s)-	\mathcal{H}(x_{M,n},s)		\leq{}&  \frac{1}{2^{M(s+1)}}\left(\left(\frac{1}{\log 2}-1\right)2^{Ms}+1\right)\leq  \frac{1}{(\log 2)2^{M(s+1)}}.
\end{align}
The relations \eqref{ecua17}, \eqref{ecua18} and the fact that  
\[
\frac{1}{2^{(M+1)(s+1)}}<\frac{1}{(\log 2)2^{M(s+1)}}
\]
we have obtained that fo $-1<s<0$, allow us to conclude that for $-1<s<0$, we have 
\begin{align*}
	\max\{\mathcal{H}(x_{M+1,2n+1},s),\mathcal{H}(x_{M+1,2n},s)\}	\leq {} &\mathcal{H}(x_{M,n},s) 	+ \frac{1}{(\log 2)2^{M(s+1)}}  , \quad n=0, 1,\ldots,2^{M-1}-1.
\end{align*}
This inequality and \eqref{ecua16} quickly lead to \eqref{ecua14}.

\section{On the limit points of the sequences $G(\eta(N);s)$ and $\Lambda(\eta(N))$, and a Ces\`{a}ro summability property}\label{sect:density-optimal-potential}

Recall the definition of the functions $G(\vec{\theta};s)$ and $\Lambda(\vec{\theta})$ in \eqref{def:G-Lambda-functions}. In \cite{LopMin} it was proved that for any $s>0$, the set of limit points of the sequence $(G(\eta(N);s))_{N=1}^{\infty}$ is given by the closed interval with endpoints $1$ and $(2^{s}-1)^{-1}$, while the set of limit points of the sequence $(\Lambda(\eta(N)))_{N=1}^{\infty}$ coincides with the interval $[-2\log 2,0]$. From this result and \eqref{thirdcase} we deduced in \cite[Thm. 1.3]{LopMin} that the limit points of the sequence $(F_{N,s})_{N=1}^{\infty}$, $s>0$, fill a compact interval (the same claim is true in the case $s=0$). In this section we provide a new and simpler proof of the density results stated for the sequences $(G(\eta(N);s))_{N=1}^{\infty}$ and $(\Lambda(\eta(N)))_{N=1}^{\infty}$, based on the ideas developed in the present work. We also show that in the range $-2<s<0$, the sequence $(F_{N,s})_{N=1}^{\infty}$ is Ces\`{a}ro summable. 

First, we extend the definition \eqref{def:G-Lambda-functions}, and for $\vec{\vartheta}=(\vartheta_1,\vartheta_2,\ldots)\in \mathcal{S}$ we define 
\begin{align*}
G(\vec{\vartheta};s) & :=\sum_{k=1}^\infty\vartheta_k^s,\qquad s>0,\\
\Lambda(\vec{\vartheta}) & :=\sum_{k=1}^{\infty}\vartheta_{k}\log \vartheta_{k}.
\end{align*}
Using the Lebesgue dominated convergence theorem, it is easy to see that $G(\vec{\vartheta};s)$ and $\Lambda(\vec{\vartheta})$ are continuous over $\mathcal{S}$. Note also that $G(\vec{\vartheta};1)=1$ for all $\vec{\vartheta}\in\mathcal{S}$.

For $x\in [1/2, 1]$, we define $\mathcal{G}(x,s)$, $s>0$, and $\widetilde{\Lambda}(x)$ as follows. If $x$ is such that $1/x$ can be written in the form \eqref{15-8-1}, we write 
\begin{align*}
\mathcal{G}(x,s) & :=G(\vec{\vartheta}(x);s),\\
\widetilde{\Lambda}(x) & :=\Lambda(\vec{\vartheta}(x)),
\end{align*}
otherwise we set 
\begin{align*}
\mathcal{G}(x,s) & :=	G(\vec{\vartheta}_\infty(x);s),\\
\widetilde{\Lambda}(x) & :=\Lambda(\vec{\vartheta}_{\infty}(x)),
\end{align*}
see \eqref{firstform} and \eqref{secondform}. 

\begin{proposition}\label{prop_rangeGcal}
The range of the functions $\mathcal{G}(x,s)$ and $\widetilde{\Lambda}$ are the closed intervals
\begin{align}
\mathcal{G}([1/2,1],s) & =[1,(2^{s}-1)^{-1}],\qquad 0<s<1,\label{rangeGsl1}\\
\mathcal{G}([1/2,1],s) & =[(2^{s}-1)^{-1},1],\qquad s>1,\label{rangeGsb1}\\
\widetilde{\Lambda}([1/2,1]) & =[-2\log 2 ,0].\label{rangeLambdatilde}
\end{align}
\end{proposition}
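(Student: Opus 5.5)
Following the argument for $\mathcal{H},\mathcal{K},\mathcal{R}$, we first show that $\mathcal{G}(\cdot,s)$ and $\widetilde{\Lambda}$ are continuous on $[1/2,1]$ and then locate the two endpoints of their ranges.

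For continuity, we need the analogue of Proposition \ref{prop_uniqvalue}: when $x\in I_2$, $G$ and $\Lambda$ agree on $\vec{\vartheta}(x)$ and $\vec{\vartheta}_\infty(x)$. By \eqref{relation-two-vectors}, the last nonzero entry $\theta_m$ is replaced by the tail $\theta_m2^{-l}$, $l\geq 1$. For $G$ this gives
\[
\sum_{l\ge1}\theta_m^s2^{-ls}=\theta_m^s/(2^s-1).
\]
This equals $\theta_m^s$ only when $s=1$. So for $s\neq1$ the two vectors generally give different values, and $\mathcal{G}$ as defined (using the finite vector on $I_2$) may have jump discontinuities. Hence continuity cannot be the route. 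Instead we use the fact that $\{G(\vec\vartheta;s):\vec\vartheta\in\mathcal{S}\}$ is the set of limit points, as in Theorem \ref{theo:limpointHKR}. A single-valued function of $x$ does not capture all of $\mathcal{S}$, so we establish the two range claims directly. The same tail computation for $\Lambda$ gives
\[
\sum_l \theta_m2^{-l}\log(\theta_m2^{-l})=\theta_m\log\theta_m-2\theta_m\log 2,
\]
which again differs from the finite-vector value.

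Plan for \eqref{rangeGsl1} (with \eqref{rangeGsb1} the mirror case). The bounds $1\le G\le (2^s-1)^{-1}$ for $0<s<1$ on $\mathcal{S}_\eta$ are known (Theorem 1.1 of \cite{LopMin} and $\sum\theta_k=1$), and they pass to all of $\mathcal{S}$ by continuity. The value $1$ is attained at $x=1$, and $(2^s-1)^{-1}$ is attained at $x=1/2$, where the vector is $(2^{-k})_{k\ge1}$. To obtain every intermediate value, we show that $\mathcal{G}(\cdot,s)$ has only upward jumps as $x$ increases through $I_2$ points (for $0<s<1$), and that between jumps it varies continuously; Lemmas \ref{lem_elem1} and \ref{lem_elem2} give the one-sided limits. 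A cleaner route to this is to take the lower semicontinuous envelope, defined as the minimum of the two vector values at each $I_2$ point; this envelope is continuous, and both vectors lie in $\mathcal{S}$. Alternatively, we can avoid jumps altogether by parametrizing a path in $\mathcal{S}$: moving $x$ from $1$ down to $1/2$ through $\vec\vartheta_\infty$ and inserting, at each $I_2$ point, the segment that interpolates by moving along the convex combination of the two vectors. That combination is not in $\mathcal{S}$, however, so the better option is to use the intermediate limit points directly. By Lemma \ref{lem_elem1}(i), $\vec\vartheta(x)$ is the left limit of $\vec\vartheta_\infty(y)$, and $\vec\vartheta_\infty(x)$ is the right limit. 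The map $x\mapsto\vec\vartheta_\infty(x)$ on $(1/2,1)$ is therefore left-discontinuous only at $I_2$ points, with one-sided limit values in $\mathcal{S}$.

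The main obstacle is therefore to show the image of $\mathcal{S}$ under $G$ has no gaps despite these jumps. We would try to prove that each jump gap $[\theta_m^s,\theta_m^s/(2^s-1)]$, shifted by the common part, is covered by values at nearby points. Concretely, for $x\in I_2$, consider the points $y$ whose expansion is $1/x$ plus $2^{-k_m-r}$ with $r$ varying. Along them $G$ sweeps values accumulating at both endpoints of the gap. By self-similarity, since the tail structure reproduces the whole picture scaled by $\theta_m^s$, the gap is filled by a scaled copy of the full range $[1,(2^s-1)^{-1}]$ times $\theta_m^s$, which is exactly the gap. The same scaling argument handles $\Lambda$: the tail range is $\theta_m[-2\log2,0]$ plus the shift $\theta_m\log\theta_m$. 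This matches the jump, and \eqref{rangeLambdatilde} follows with the endpoints $0$ at $x=1$ and $-2\log 2$ at $x=1/2$.
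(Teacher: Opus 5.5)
\textbf{There is a genuine gap: the step that proves surjectivity is missing.}

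You have the right ingredients, and they are exactly the ones the paper uses. Your jump formulas are correct:
\[
G(\vec{\vartheta}_\infty(x);s)-G(\vec{\vartheta}(x);s)=\theta_m^s\,\frac{2-2^s}{2^s-1},\qquad \Lambda(\vec{\vartheta}_\infty(x))-\Lambda(\vec{\vartheta}(x))=-2\theta_m\log 2 .
\]
Together with Lemmas \ref{lem_elem1}--\ref{lem_elem2} they show that for $0<s<1$ the function $\mathcal{G}(\cdot,s)$ is left-continuous and satisfies $\lim_{y\to x+}\mathcal{G}(y,s)\geq \mathcal{G}(x,s)$ everywhere. Moreover $\mathcal{G}(1/2,s)=(2^s-1)^{-1}>1=\mathcal{G}(1,s)$.

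You stop exactly where the proof has to happen, leaving ``no gaps'' as the main obstacle. Saying that ``between jumps it varies continuously'' does not help, because $I_2$ is dense in $[1/2,1]$: no subinterval is free of jumps, so the ordinary intermediate value theorem cannot be applied piecewise.

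The missing idea is a one-sided intermediate value argument, which is what the paper does. For $1<v<(2^s-1)^{-1}$, let $x_v=\sup\{y\in[1/2,1]:\mathcal{G}(y,s)>v\}$.
\begin{itemize}
\item Left-continuity gives $\mathcal{G}(x_v,s)\geq v$, hence $x_v<1$.
\item If $\mathcal{G}(x_v,s)>v$, the right limit at $x_v$, being $\geq \mathcal{G}(x_v,s)$, would produce points $y>x_v$ with $\mathcal{G}(y,s)>v$. This contradicts the definition of $x_v$.
\item Therefore $\mathcal{G}(x_v,s)=v$.
\end{itemize}
For $s>1$ and for $\widetilde{\Lambda}$, your computed jumps are negative while the values increase from $x=1/2$ to $x=1$, so the same argument works with the inequalities reversed. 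What makes it work is that every jump goes against the net change between the endpoints.

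The substitutes you propose do not work.
\begin{itemize}
\item The ``envelope'' that takes the smaller of the two values at each $I_2$ point is just $\mathcal{G}$ itself, since for $0<s<1$ the finite vector gives the smaller value. It is lower semicontinuous but not continuous: its right limit at $x\in I_2$ is strictly larger than its value.
\item The points with $1/y=1/x+2^{-k_m-r}$ lie to the left of $x$. Their $G$-values converge only to $G(\vec{\vartheta}(x);s)$, one endpoint of the gap, not to both.
\item The self-similarity claim is circular, because it presupposes that the full range is $[1,(2^s-1)^{-1}]$.
\item It is also not exact. By Proposition \ref{prop_twodiff}, only two vectors in $\mathcal{S}$ have first entry $x$, so a tail $\theta_m\vec{\phi}$ with a general $\vec{\phi}\in\mathcal{S}$ is not admissible. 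Changing the tail forces the first entry $y$ to change, which also rescales the common part $y^s\sum_{j<m}2^{-k_j s}$. The nearby values are therefore not a scaled copy of the range placed exactly on the gap.
\item The proposition concerns the range of the single-valued function $\mathcal{G}$, which uses the finite vector at points of $I_2$, not the set $G(\mathcal{S})$. Even a complete proof that $G(\mathcal{S})$ has no gaps would not show that the values $G(\vec{\vartheta}_\infty(x);s)$, $x\in I_2$, are attained by $\mathcal{G}$.
\end{itemize}
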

\begin{proof}
It was (essentially) proven in \cite{LopMin} that  
\begin{align}
1 & \leq G(\vec{\vartheta};s) \leq \frac{1}{2^s-1},\qquad 0<s<1,\label{boundG1}\\
\frac{1}{2^{s}-1} & \leq G(\vec{\vartheta};s)\leq 1,\qquad s>1,\label{boundG2}\\
-2\log 2 & \leq \Lambda(\vec{\vartheta})\leq 0,\label{boundLambda}
\end{align}
where the upper and lower bounds are attained (formulas (1.24)--(1.26) in \cite{LopMin} show that the above inequalities are valid for vectors $\vec{\vartheta}=\eta(N)$, $N\in\mathbb{N}$, and therefore by continuity they are valid for $\vec{\vartheta}\in\mathcal{S}$). The bounds $1$ and $0$ are attained at $(1,0,0,\ldots)$, and the bounds $(2^{s}-1)^{-1}$ and $-2\log 2$ are attained at $(2^{-1},2^{-2},\ldots, 2^{-n},\ldots)$.

Assume first that $0<s<1$. For all $x\in[1/2,1]$ we have
\[
1=\mathcal{G}(1,s) \leq \mathcal{G}(x,s)\leq \mathcal{G}(1/2,s)=\frac{1}{2^s-1}.\]
By the continuity of $G$ over $\mathcal{S}$, using Lemma \ref{lem_elem1} we see that if $1/2<x<1$ has two associated vectors $\vec{\vartheta}(x)$ and $\vec{\vartheta}_\infty(x)$, then 
\[
\lim_{y\to x-}\mathcal{G}(y,s)=G(\vec{\vartheta}(x);s),\qquad \lim_{y\to x+}\mathcal{G}(y,s)=G(\vec{\vartheta}_\infty(x);s).
\]
Because $0<s<1$, we get from the relation \eqref{relation-two-vectors} between the vectors $\vec{\vartheta}(x)$ and $\vec{\vartheta}_\infty(x)$ that
\[
G(\vec{\vartheta}_\infty(x);s)-G(\vec{\vartheta}(x);s)=\frac{x^s}{2^{k_ms}}\left(\sum_{j=1}^\infty\frac{1}{2^{sj}}-1\right)=\frac{x^s}{2^{k_ms}}\left(\frac{2-2^s}{2^s-1}\right)>0.
\]
Thus for an $x$ with two associated vectors we have 
\[
\lim_{y\to x-}\mathcal{G}(y,s)=\mathcal{G}(x,s),\qquad \lim_{y\to x+}\mathcal{G}(y,s)>\mathcal{G}(x,s).
\]
From Lemma \ref{lem_elem1} we also obtain that $\lim_{y\rightarrow 1-}\mathcal{G}(y,s)=\mathcal{G}(1,s)$. Using Lemma \ref{lem_elem2} we see that if $x$ has only one associated vector  $\vec{\vartheta}_\infty(x)$, then $\lim_{y\to x}\mathcal{G}(y,s)=\mathcal{G}(x,s)$. 

Summarizing, we have found that for all $x\in[1/2,1]$,
\[
\lim_{y\to x-}\mathcal{G}(y,s)=\mathcal{G}(x,s),\qquad \lim_{y\to x+}\mathcal{G}(y,s)\geq \mathcal{G}(x,s).
\]
Using these relations we can now prove \eqref{rangeGsl1}. Suppose $v\in (1,(2^s-1)^{-1})$, so that 
\[
\mathcal{G}(1,s)<v<\mathcal{G}(1/2,s),
\]
and let $x_v$ be the supremum of the set $
I_v=\{y\in [1/2,1]: \mathcal{G}(y,s)>v\}$. Since $1/2\in I_v$, $x_v$ exists and we have 
\[
\mathcal{G}(x_v,s)=\lim_{y\to x_v-}\mathcal{G}(y,s)\geq v.
\]
This implies that $x_v<1$. If, however, $\mathcal{G}(x_v,s)>v$, then because  $\lim_{y\to x_v+}\mathcal{G}(y,s)\geq \mathcal{G}(x_v,s)$, there must exist some $y_0\in (x_v,1)$ such that $\mathcal{G}(y_0,s)>v$, contradicting the supremum definition of $x_v$. The conclusion is therefore that $\mathcal{G}(x_v,s)=v$.

Reversing the inequalities in the above argument, one can prove \eqref{rangeGsb1}. Likewise, using the continuity of $\Lambda$ over $\mathcal{S}$, reversing the inequalities in the above argument one obtains \eqref{rangeLambdatilde} (note that in this case we have $\Lambda(\vec{\vartheta}_{\infty}(x))-\Lambda(\vec{\vartheta}(x))=-\log (4) x 2^{-k_{m}}<0$). 
\end{proof}

Analogously to Theorem \ref{theo:limpointHKR}, by the continuity of the functions $G(\cdot;s)$ and $\Lambda(\cdot)$ over $\mathcal{S}$, we obtain that the sets of all limit points of the sequences $(G(\eta(N);s))_{N=1}^{\infty}$ and $(\Lambda(\eta(N)))_{N=1}^{\infty}$ are identical to the sets $\{G(\vec{\vartheta};s): \vec{\vartheta}\in\mathcal{S}\}$ and $\{\Lambda(\vec{\vartheta}): \vec{\vartheta}\in\mathcal{S}\}$, respectively. We also have the identities
\begin{align*}
\{G(\vec{\vartheta};s): \vec{\vartheta}\in\mathcal{S}\} & =\mathcal{G}([1/2,1],s),\qquad s>0,\\
\{\Lambda(\vec{\vartheta}): \vec{\vartheta}\in\mathcal{S}\} & =\widetilde{\Lambda}([1/2,1]).
\end{align*}
Indeed, it is obvious from the definition of the functions $\mathcal{G}$ and $\widetilde{\Lambda}$ that $\mathcal{G}([1/2,1],s)\subseteq \{G(\vec{\vartheta};s): \vec{\vartheta}\in\mathcal{S}\}$ and $\widetilde{\Lambda}([1/2,1])\subseteq \{\Lambda(\vec{\vartheta}): \vec{\vartheta}\in\mathcal{S}\}$. The other inclusion follows immediately from \eqref{boundG1}--\eqref{boundLambda} and Proposition \ref{prop_rangeGcal}. In conclusion, the interval with endpoints $1$ and $(2^{s}-1)^{-1}$ coincides with the set of limit points of the sequence $(G(\eta(N);s))_{N=1}^{\infty}$, and $[-2\log 2,0]$ is the set of limit points of the sequence $(\Lambda(\eta(N)))_{N=1}^{\infty}$.

In \cite{LopMc1}, Theorem 3.11, it was proved that in the range $-2<s<0$, the sequence 
\begin{equation}\label{6-8-1}
(U_{N,s}(a_{N})-N I_{s}(\sigma))_{N=1}^{\infty}
\end{equation}
is bounded and divergent. In fact, for each $N\geq 1$, the inequality
\[
0<U_{N,s}(a_{N})-N I_{s}(\sigma)<I_{s}(\sigma)
\]
holds, and the bounds are sharp. In our next result we show that the sequence \eqref{6-8-1} is, however, Ces\`{a}ro summable.

\begin{theorem}
Let $-2<s<0$, and let $(a_{n})_{n=0}^{\infty}\subset S^{1}$ be a greedy $s$-energy sequence. Then
\[
\lim_{N\rightarrow\infty}\frac{1}{N}\sum_{k=1}^{N}(U_{k,s}(a_{k})-k I_{s}(\sigma))=\frac{I_{s}(\sigma)}{2}.
   \]
In fact, we have
\[
\frac{1}{N}\sum_{k=1}^{N}(U_{k,s}(a_{k})-k I_{s}(\sigma))-\frac{I_{s}(\sigma)}{2}=\begin{cases}
O(N^{s}), & -1<s<0,\\
O(N^{-1}\log N), &\,\,\,\,\,\,\,\, s=-1,\\
O(N^{-1}), & -2<s<-1.
\end{cases}
\]
\end{theorem}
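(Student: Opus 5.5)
The plan is to reduce the Cesàro mean to the energy of a section of the greedy sequence, whose asymptotics we already control. From the identity
\[
E_s(a_0,\ldots,a_{k-1},z)=2U_{k,s}(z)+E_s(a_0,\ldots,a_{k-1})
\]
evaluated at $z=a_k$, we get $E_s(\alpha_{k+1,s})-E_s(\alpha_{k,s})=2U_{k,s}(a_k)$ for every $k\geq 1$. Since $E_s(\alpha_{1,s})=0$ (a single point), summing over $1\leq k\leq N$ telescopes to
\[
\sum_{k=1}^{N}U_{k,s}(a_k)=\tfrac12 E_s(\alpha_{N+1,s}).
\]
Together with $\sum_{k=1}^N k=N(N+1)/2$, this gives the exact identity
\[
\frac{1}{N}\sum_{k=1}^{N}(U_{k,s}(a_{k})-kI_s(\sigma))-\frac{I_s(\sigma)}{2}
=\frac{E_s(\alpha_{N+1,s})-(N+1)^2 I_s(\sigma)}{2N}+\frac{I_s(\sigma)}{2N}.
\]
Here we used $(N+1)^2-N(N+1)=N+1$, so that $\frac{(N+1)I_s(\sigma)}{2N}-\frac{I_s(\sigma)}{2}=\frac{I_s(\sigma)}{2N}$. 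The whole problem therefore reduces to estimating $E_s(\alpha_{M,s})-M^2I_s(\sigma)$ with $M=N+1$.

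Next, I would split into the three ranges of $s$.
\begin{itemize}
\item For $-1<s<0$, recall that $v(s)=I_s(\sigma)$ in this range. Theorem \ref{firstthm-expansion} with $\lfloor (s+1)/2\rfloor=0$ gives
\[
E_s(\alpha_{M,s})-M^2I_s(\sigma)=\frac{2\zeta(s)}{(2\pi)^s}H(\eta(M);s)M^{1+s}+O(1).
\]
By Proposition \ref{propboundedseq}, $H(\eta(M);s)$ is bounded, so this difference is $O(M^{1+s})$. Dividing by $2N$ gives $O(N^s)$. The extra term $I_s(\sigma)/(2N)$ is $O(N^{-1})\subset O(N^s)$.
\item For $s=-1$, use \eqref{expan2} together with \eqref{ineqHminusone}. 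These give $E_{-1}(\alpha_{M,-1})-\frac{4}{\pi}M^2=O(\log M)$, with $v(-1)=I_{-1}(\sigma)=4/\pi$. The mean then differs from $I_s(\sigma)/2$ by $O(N^{-1}\log N)$.
\item For $-2<s<-1$, the expansions of Section \ref{section:EnergyExpansion} are not available. Instead I would invoke the boundedness of $T_{N,s}=E_s(\alpha_{N,s})-N^2I_s(\sigma)$, which is the result of \cite{LopMc1,LopWag} recalled in the introduction. This bounds the numerator by $O(1)$, so the whole expression is $O(N^{-1})$.
\end{itemize}

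The first limit statement then follows, since every error term tends to $0$.

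The only delicate point is the range $-2<s<-1$. There one must rely on the external boundedness result rather than on Theorem \ref{firstthm-expansion}. If one wanted a self-contained argument instead, I would rerun the proof of Theorem \ref{firstthm-expansion} using \eqref{energybin} with \eqref{expLsN} for $J=0$. By the bound in the proof of Proposition \ref{propboundedseq2}, the term $H(\eta(N);s)N^{1+s}$ is bounded for $s<-1$. However, the validity of \eqref{expLsN} for $s<-1$ would need to be checked separately, which is why I prefer to cite \cite{LopMc1}.
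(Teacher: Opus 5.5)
Your proof is correct and follows the paper's argument. The paper uses the same telescoping identity $\sum_{k=1}^N U_{k,s}(a_k)=\tfrac12 E_s(\alpha_{N+1,s})$ to reduce everything to the growth of $E_s(\alpha_{N+1,s})-(N+1)^2I_s(\sigma)$, and the only difference is that it cites Theorems 3.16--3.18 of \cite{LopMc1} for all three ranges, whereas you derive the cases $-1\le s<0$ from Theorem \ref{firstthm-expansion}, \eqref{expan2}, and the bounds on $H(\eta(N);s)$, which is equally valid.
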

\begin{proof}
First observe that
\begin{align*}
\sum_{k=1}^{N}(U_{k,s}(a_{k})-k I_{s}(\sigma)) & =\sum_{k=1}^{N}U_{k,s}(a_{k})-\frac{N(N+1)}{2} I_{s}(\sigma)\\
& =\frac{1}{2}E_{s}(\alpha_{N+1,s})-\frac{N(N+1)}{2} I_{s}(\sigma).
  \end{align*}
It follows that
\[
\frac{1}{N}\sum_{k=1}^{N}(U_{k,s}(a_{k})-k I_{s}(\sigma))=\frac{1}{2N}(E_{s}(\alpha_{N+1,s})-(N+1)^{2}I_{s}(\sigma))+\frac{1}{2}\frac{N+1}{N} I_{s}(\sigma).
\]
We know by Theorems 3.16, 3.17, and 3.18 in \cite{LopMc1} that the following sequences are bounded:
\begin{gather*}
\frac{E_{s}(\alpha_{N,s})-N^{2}I_{s}(\sigma)}{N^{1+s}},\qquad -1<s<0,\\
\frac{E_{-1}(\alpha_{N,-1})-N^{2}I_{-1}(\sigma)}{\log N},\qquad s=-1,\\
E_{s}(\alpha_{N,s})-N^{2}I_{s}(\sigma),\qquad -2<s<-1.
\end{gather*}
Therefore
\[
\lim_{N\rightarrow\infty}\frac{1}{N}(E_{s}(\alpha_{N+1,s})-(N+1)^{2}I_{s}(\sigma))=0
\]
and the claims follow.
\end{proof}

\bigskip

\noindent \textsc{School of Data, Mathematical, and Statistical Sciences, University of Central Florida, 4393 Andromeda Loop North, Orlando, FL 32816, USA} \\
\textit{Email address}: \texttt{abey.lopez-garcia\symbol{'100}ucf.edu}

\bigskip

\noindent \textsc{Department of Mathematics, The University of Mississippi, Hume Hall 305, P.O. Box 1848, University, MS 38677, USA}\\
\textit{Email address}: \texttt{minadiaz\symbol{'100}olemiss.edu}

\end{document}